\newtheorem{theorem}{Theorem} [section]
\newtheorem{corollary}[theorem]{Corollary} 
\newtheorem{conjecture}{Conjecture}
\newtheorem{open}{Open Question}
\newcounter{defno}
\long\def\void#1{}
\begin{document}
International Journal of  Computer Discovered Mathematics (IJCDM) \\
ISSN 2367-7775 \copyright IJCDM \\
Volume 10, 2025 pp. 91--123  \\
web: \url{http://www.journal-1.eu/} \\
Received 5 Feb. 2025. Published on-line 11 April 2025\\ 

\copyright The Author(s) This article is published 
with open access.\footnote{This article is distributed under the terms of the Creative Commons Attribution License which permits any use, distribution, and reproduction in any medium, provided the original author(s) and the source are credited.} \\
\bigskip
\bigskip

\begin{center}
	{\Large \textbf{More Relationships between a Central Quadrilateral and its Reference Quadrilateral}} \\
	\medskip
	\bigskip
        \bigskip

	\textsc{Stanley Rabinowitz$^a$ and Ercole Suppa$^b$} \\

	$^a$ 545 Elm St Unit 1,  Milford, New Hampshire 03055, USA \\
	e-mail: \href{mailto:stan.rabinowitz@comcast.net}{stan.rabinowitz@comcast.net}\footnote{Corresponding author} \\
	web: \url{http://www.StanleyRabinowitz.com/} \\
	
	$^b$ Via B. Croce 54, 64100 Teramo, Italia \\
	e-mail: \href{mailto:ercolesuppa@gmail.com}{ercolesuppa@gmail.com} \\
	web: \url{https://www.esuppa.it} \\

\bigskip
\bigskip

\end{center}
\bigskip

\textbf{Abstract.}
The diagonals of a quadrilateral form four associated triangles, called \emph{half triangles}.
Each half triangle is bounded by two sides of the quadrilateral and one diagonal.
If we locate a triangle center (such as the incenter, centroid, orthocenter, etc.) in each of these triangles, the four triangle
centers form another quadrilateral called a \emph{central quadrilateral}.
For each of various shaped quadrilaterals, and each of 1000 different triangle
centers, we compare the reference quadrilateral to the central quadrilateral.
Using a computer, we determine how the two quadrilaterals are related.
For example, we test to see if the two quadrilaterals are congruent, similar, have the same area,
or have the same perimeter.

\bigskip
\textbf{Keywords.} triangle centers, quadrilaterals, computer-discovered mathematics,
Euclidean geometry, GeometricExplorer, Baricentricas.

\medskip
\textbf{Mathematics Subject Classification (2020).} 51M04, 51-08.

\newcommand{\ru}{\rule[-8pt]{0pt}{20pt}}

\newcommand{\m}{\mathrm{m}}
\newcommand{\h}{\mathrm{h}}
\newcommand{\diag}{\mathrm{dp}}
\newcommand{\oo}{\mathrm{o}}
\newcommand{\ho}{\mathrm{homot}}
\newcommand{\persp}{\mathrm{persp}}
\newcommand{\ortho}{\mathrm{ortho}}
\newcommand{\conic}{\mathrm{conic}}
\newcommand{\hyperb}{\mathrm{hyperb}}
\newcommand{\ponce}{\mathrm{ponce}}
\newcommand{\stein}{\mathrm{stein}}
\newcommand{\anti}{\mathrm{anti}}
\newcommand{\centro}{\mathrm{centro}}
\newcommand{\incent}{\mathrm{i}}

\newcommand{\no}[1]{\textcolor{red}{#1}}

\newenvironment{code}[2]
{
\medskip
\hspace{#1}%
\begin{minipage}{#2}
\color{blue}
}
{
\color{black}
\smallskip
\end{minipage}%
}


\bigskip
\bigskip
%
\section{Introduction}
\label{section:introduction}

The diagonals of a quadrilateral (called the \emph{reference quadrilateral}) form four associated triangles, called \emph{half triangles},
shown in Figure~\ref{fig:sideTriangles}.
Each half triangle is bounded by two sides of the quadrilateral and one diagonal.
The reference quadrilateral is always named $ABCD$.
The four triangles (numbered 1 to 4) are shown
in Figure \ref{fig:sideTriangles}.

\begin{figure}[h!t]
\centering
\includegraphics[width=0.6\linewidth]{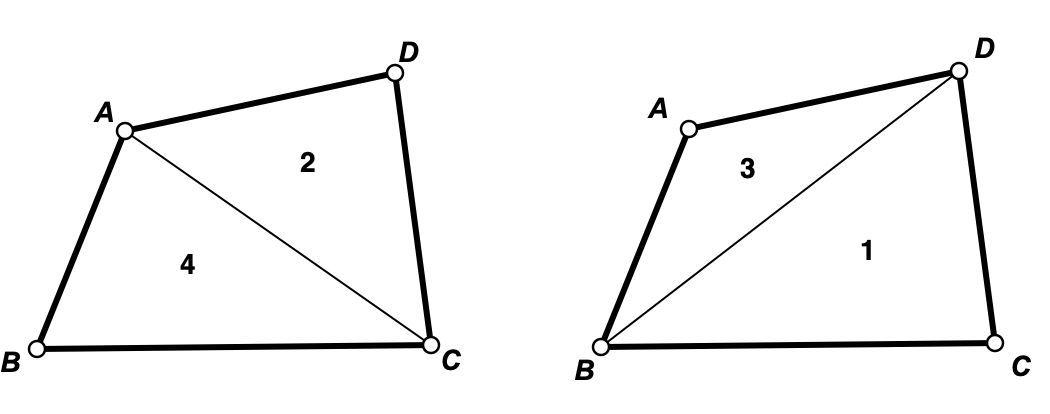}
\caption{Half Triangles}
\label{fig:sideTriangles}
\end{figure}

The triangles have been numbered so that triangle 1 is opposite vertex $A$,
triangle 2 is opposite vertex $B$, etc.
The four triangles are $\triangle BCD$, $\triangle ACD$, $\triangle ABD$, and $\triangle ABC$.

Triangle centers are selected in each triangle (for example, incenters, centroids, or orthocenters).
The same type of triangle center is used with each half triangle.
In order, the names of these points are $E$, $F$, $G$, and $H$, as shown in Figure \ref{fig:centralQuadrilateral}.
These four centers form a quadrilateral $EFGH$ that will be called the \emph{central quadrilateral}.
Quadrilateral $EFGH$ need not be convex.

\begin{figure}[h!t]
\centering
\includegraphics[width=0.4\linewidth]{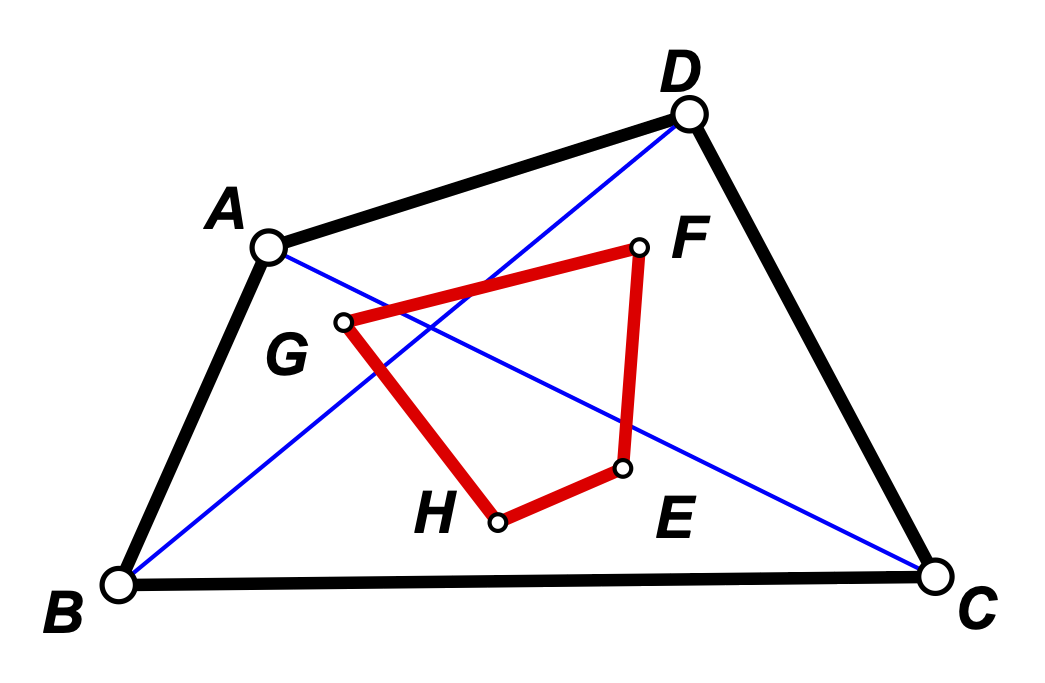}
\caption{Central Quadrilateral}
\label{fig:centralQuadrilateral}
\end{figure}

The purpose of this paper is to determine interesting relationships between a reference quadrilateral
and its central quadrilateral. This paper extends our previous results found in \cite{relationships}.

\newpage

\section{Types of Quadrilaterals Studied}
\label{section:quadrilaterals}

We are only interested in convex reference quadrilaterals that have a certain amount of symmetry.
For example, we excluded bilateral quadrilaterals (those with two equal sides),
bisect-diagonal quadrilaterals (where one diagonal bisects another), right kites,
right trapezoids, and golden rectangles.
The types of quadrilaterals we studied are shown in Table \ref{table:quadrilaterals}.
The sides of the quadrilateral, in order, have lengths $a$, $b$, $c$, and $d$.
The diagonals have lengths $p$ and $q$.
The measures of the angles of the quadrilateral, in order, are $A$, $B$, $C$, and $D$.

\begin{table}[ht!]
\caption{}
\label{table:quadrilaterals}
\begin{center}
\footnotesize
\begin{tabular}{|l|l|l|}\hline
\multicolumn{3}{|c|}{\textbf{\color{blue}\Large \strut Types of Quadrilaterals Considered}}\\ \hline
\textbf{Quadrilateral Type}&\textbf{Geometric Definition}&\textbf{Algebraic Condition}\\ \hline
general&convex&none\\ \hline
cyclic&has a circumcircle&$A+C=B+D$\\ \hline
tangential&has an incircle&$a+c=b+d$\\ \hline
extangential&has an excircle&$a+b=c+d$\\ \hline
parallelogram&opposite sides parallel&$a=c$, $b=d$\\ \hline
equalProdOpp&product of opposite sides equal&$ac=bd$\\ \hline
equalProdAdj&product of adjacent sides equal&$ab=cd$\\ \hline
orthodiagonal&diagonals are perpendicular&$a^2+c^2=b^2+d^2$\\ \hline
equidiagonal&diagonals have the same length&$p=q$\\ \hline
Pythagorean&equal sum of squares, adjacent sides&$a^2+b^2=c^2+d^2$\\ \hline
kite&two pair adjacent equal sides&$a=b$, $c=d$\\ \hline
trapezoid&one pair of opposite sides parallel&$A+B=C+D$\\ \hline
rhombus&equilateral&$a=b=c=d$\\ \hline
rectangle&equiangular&$A=B=C=D$\\ \hline
Hjelmslev&two opposite right angles&$A=C=90^\circ$\\ \hline
isosceles trapezoid&trapezoid with two equal sides&$A=B$, $C=D$\\ \hline
APquad&sides in arithmetic progression&$d-c=c-b=b-a$\\ \hline
\end{tabular}
\end{center}
\end{table}

The following combinations of entries in the above list were also considered:
bicentric quadrilaterals (cyclic and tangential), exbicentric quadrilaterals (cyclic and extangential),
bicentric trapezoids, cyclic orthodiagonal quadrilaterals, equidiagonal kites,
equidiagonal orthodiagonal quadrilaterals, equidiagonal orthodiagonal trapezoids,
harmonic quadrilaterals (cyclic and equalProdOpp), orthodiagonal trapezoids, tangential trapezoids,
and squares (equiangular rhombi).

So, in addition to the general convex quadrilateral, a total of 27 other types of quadrilaterals
were considered in this study.

A graph of the types of quadrilaterals considered is shown in Figure \ref{fig:quadShapes}.
An arrow from A to B means that any quadrilateral of type B is also of type A.
For example: all squares are rectangles and all kites are orthodiagonal.
If a directed path leads from a quadrilateral of type A to a quadrilateral of type B, then we will
say that A is an \emph{ancestor} of B. For example, an equidiagonal quadrilateral is an ancestor of a rectangle.
In other words, all rectangles are equidiagonal.

\begin{figure}[h!t]
\centering
\scalebox{1}[1.5]{\includegraphics[width=1\linewidth]{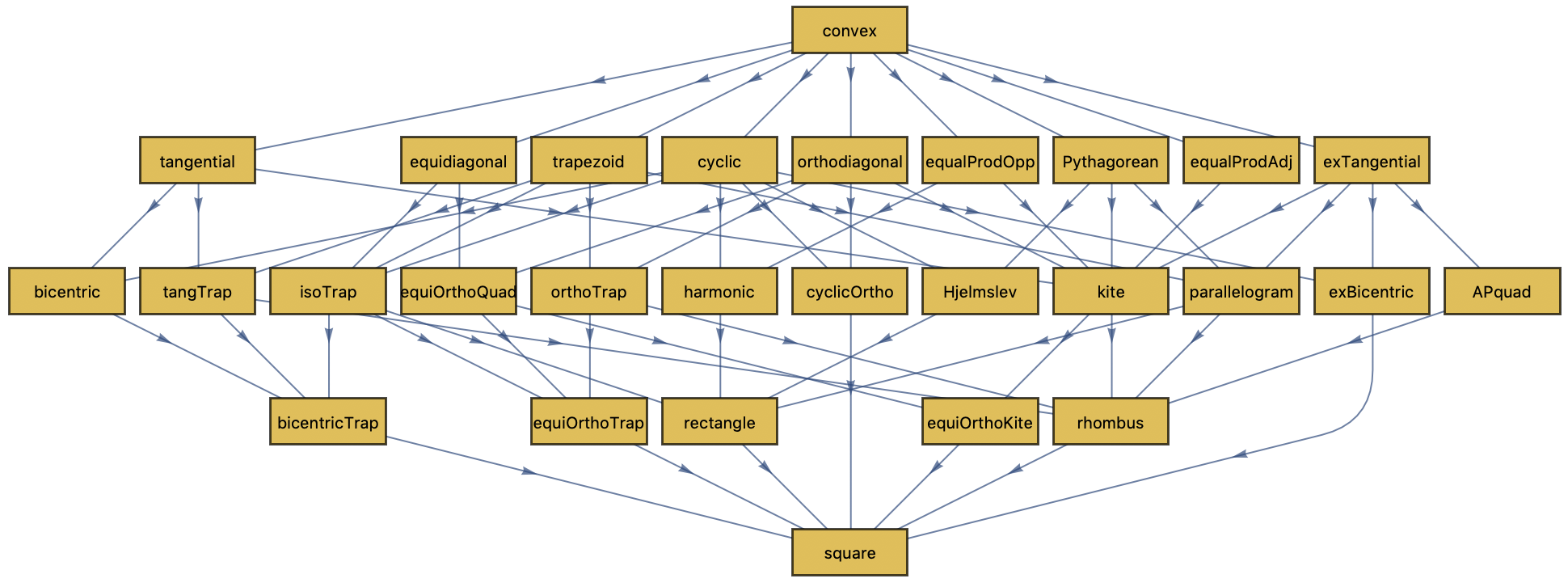}}
\caption{Quadrilateral Shapes}
\label{fig:quadShapes}
\end{figure}

Unless otherwise specified, when we give a theorem or table of properties of a quadrilateral, we will omit an entry
for a particular shape quadrilateral if the property is known to be true for an ancestor of that quadrilateral.

\newpage

\section{Centers}
\label{section:centers}

In this study, we will place triangle centers in the four half
triangles. We use Clark Kimberling's definition of a triangle center \cite{KimberlingA}.

A \emph{center function} is a nonzero function $f(a,b,c)$
homogeneous in $a$, $b$, and $c$ and symmetric in $b$ and $c$.
\emph{Homogeneous} in $a$, $b$, and $c$ means that
$$f(ta,tb,tc)=t^nf(a,b,c)$$
for some nonnegative integer $n$,
all $t>0$, and all positive real numbers $(a,b,c)$ satisfying $a<b+c$, $b<c+a$, and $c<a+b$.
\emph{Symmetric} in $b$ and $c$ means that
$$f(a,c,b)=f(a,b,c)$$
for all $a$, $b$, and $c$.

A \emph{triangle center} is an equivalence class $x:y:z$ of ordered triples $(x,y,z)$
given by
$$x=f(a,b,c),\quad y=f(b,c,a),\quad z=f(c,a,b).$$

Tens of thousands of interesting triangle centers have been cataloged in the Encyclopedia of Triangle Centers \cite{ETC}. We use $X_n$ to denote the $n$-th named center in this encyclopedia.

Note that if the center function of a certain center is $f(a,b,c)$,
then the trilinear coordinates of that point with respect to a triangle with sides $a$, $b$, and $c$ are
$$\Bigl(f(a,b,c):f(b,c,a):f(c,a,b)\Bigr).$$
The barycentric coordinates for that point would then be
$$\Bigl(af(a,b,c):bf(b,c,a):cf(c,a,b)\Bigr).$$

\newpage

\section{Methodology}
\label{section:methodology}

We used a computer program called GeometricExplorer to compare quadrilaterals
with their central quadrilateral. Starting with each type of quadrilateral listed in
Figure~\ref{fig:quadShapes} for the reference quadrilateral, we placed triangle centers
in each of the four half triangles.

For each $n$ from 1 to 1000, we determined center $X_n$
of each of the half triangles of the reference quadrilateral.
The program then analyzes the central quadrilateral formed by these four centers
and reports if the central quadrilateral is related to the reference quadrilateral.
Points at infinity were omitted.
The types of relationships checked for are shown in Table \ref{table:relationships}.

\label{table:relationships}
\begin{center}
\begin{tabular}{|l|p{3.5in}|}
\hline
\multicolumn{2}{|c|}{\Large \strut \textbf{\color{blue}Relationships Checked For}}\\
\hline
\textbf{notation}&\textbf{description}\\ \hline
\ru $[ABCD]=[EFGH]$&the~quadrilaterals~have~the~same~area \tiny (This relationship is excluded if the quadrilaterals are congruent.)\\ \hline
\ru $[ABCD]=k[EFGH]$&the area of $ABCD$ is $k$ times the area of $EFGH$\,$\dagger$\\ \hline
\ru $ABCD\cong EFGH$&the quadrilaterals are congruent\\ \hline
\ru $ABCD\sim EFGH$&the quadrilaterals are similar \tiny (This relationship is excluded if the quadrilaterals are homothetic.)\\ \hline
\ru $\partial ABCD=\partial EFGH$&the quadrilaterals have the same perimeter \tiny (This relationship is excluded if the quadrilaterals are congruent.)\\ \hline
\ru $\odot ABCD\cong\odot EFGH$&the quadrilaterals have congruent circumcircles \tiny (This relationship is excluded if the quadrilaterals are congruent.)\\ \hline
\ru $\odot ABCD\equiv \odot EFGH$&the quadrilaterals have the same circumcircle\\ \hline
\ru $\mathrm{o}(ABCD)=\mathrm{o}(EFGH)$&the quadrilaterals have the same circumcenter
\tiny (This relationship is excluded if the quadrilaterals have the same circumcircle.)\\ \hline
\ru $\mathrm{i}(ABCD)=\mathrm{i}(EFGH)$&the quadrilaterals have the same incenter\\ \hline
\ru $\mathrm{dp}(ABCD)=\mathrm{dp}(EFGH)$&the quadrilaterals have the same diagonal point\\ \hline
\ru $\mathrm{persp}(ABCD,EFGH)$&the quadrilaterals are perspective
\tiny (This relationship is excluded if the quadrilaterals are homothetic.)\\ \hline
\ru $\mathrm{homot}(ABCD,EFGH)$&the quadrilaterals are homothetic\\ \hline
\ru $\mathrm{conic}(ABCD,EFGH)$&the quadrilaterals have a common noncircular circumconic\\ \hline
\ru $\mathrm{hyperb}(ABCD,EFGH)$&the quadrilaterals have a common circumconic which is a rectangular hyperbola.
\tiny (By definition, the center of this hyperbola is the Poncelet point (QA-P2) of both $ABCD$ and $EFGH$.)\\ \hline
\ru $\mathrm{ctr1}[ABCD]=\mathrm{ctr2}[EFGH]$&the quadrilaterals have coincident centers\\ \hline
\multicolumn{2}{|l|}{$\dagger$ \small Only rational values of $k$ were checked for with denominators less than 10.}\\
\hline
\end{tabular}
\end{center}

\newpage

The types of quadrilateral centers considered are shown in Table \ref{table:centers}.
For example, the relationship $\ponce[ABCD]=\stein[EFGH]$ means that the
Poncelet point of quadrilateral $ABCD$ coincides with the Steiner point of quadrilateral $EFGH$.

\begin{table}[ht!]
\caption{}
\label{table:centers}
\begin{center}
\begin{tabular}{|l|l|l|}
\hline
\multicolumn{3}{|c|}{\Large \strut \textbf{\color{blue}Quadrilateral Centers Considered}}\\
\hline
\textbf{name}&\textbf{description}&\textbf{symbol}\\ \hline
vertex centroid&(QA-P1)&$\m$\\ \hline
Poncelet point&(QA--P2), also known as the Euler-Poncelet point&$\ponce$\\ \hline
Steiner point&(QA--P3), also known as the Gergonne-Steiner point&$\stein$\\ \hline
diagonal point&intersection of the diagonals (QG--P1)&$\diag$\\ \hline
\multicolumn{3}{|c|}{\small \color{blue}The following centers are only defined for cyclic quadrilaterals.}\\
\hline
anticenter&intersection of the maltitudes (QA--P2)&$\anti$\\ \hline
circumcenter&center of circumscribed circle (QA-P3)&$\oo$\\ \hline
centrocenter&\small center of circle through centroids of half triangles (QA-P7)&$\centro$\\ \hline
orthocenter&center of circle through orthocenters of half triangles&$\h$\\ \hline
\multicolumn{3}{|c|}{\small \color{blue}The following centers are only defined for tangential quadrilaterals.}\\
\hline
incenter&center of inscribed circle&i\\ \hline
\void{
quasi centroid$\dagger$&(QG-P4)\\ \hline
quasi circumcenter$\dagger$&(QG-P5)\\ \hline
quasi orthocenter$\dagger$&(QG-P6)\\ \hline
quasi nine-point center$\dagger$&(QG-P7)\\ \hline
quasi incenter$\dagger$&\\ \hline
Kirikami center$\dagger$&(QG-P15)\\ \hline
Miquel point$\dagger$&(QL-P1)\\ \hline
\multicolumn{3}{|l|}{$\dagger$ \small This point might be omitted from the final paper.}\\ \hline
}
\end{tabular}
\end{center}
\end{table}

Some quadrilateral centers only exist for certain shape quadrilaterals.
For example, the circumcenter, anticenter, orthocenter, and centrocenter only apply to cyclic quadrilaterals.
The incenter only applies to tangential quadrilaterals.
A code in parentheses represents the name for the point as listed in
the Encyclopedia of Quadri-Figures~\cite{EQF}.

When reporting perspectivities or homotheties, we will specify what type of point the perspector is.
Only quadrangle points listed in \cite{EQF} are detected. As of January 2025, only 44 points were listed.

For example, the property
QA-Pi=$\persp(ABCD,EFGH)$=QA-Pj means that the perspector is the QA-Pi point of quadrilateral $ABCD$
and is the QA-Pj point of quadrilateral $EFGH$.

A similar notation is used when describing properties involving conics. We check to see if the center of the conic
is one of the known quadrangle points. For example, the property
QA-Pi=$\conic(ABCD,EFGH)$ means that the center of the common circumconic is the QA-Pi point of quadrilateral $ABCD$.

\newpage

The most common quadrangle points are described in the following table. See \cite{EQF} for the definition of terms and more details.
Note that a \emph{quadrangle} is an unordered set of four points in the plane (no three of which are collinear).

\begin{table}[ht!]
\caption{}
\label{table:quadcenters}
\begin{center}
\begin{tabular}{|l|p{2in}|p{2.83in}|}
\hline
\multicolumn{3}{|c|}{\Large \strut \textbf{\color{blue}Common Quadrangle Centers}}\\
\hline
\textbf{symbol}&\textbf{common name}&\textbf{description}\\ \hline
QA-P1&Quadrangle Centroid&center of gravity of equal masses placed at the vertices\\ \hline
QA-P2&Euler-Poncelet Point&common point of the nine-point circles of the half triangles\\ \hline
QA-P3&Gergonne-Steiner Point&Common point of the four midray circles\\ \hline
QA-P4&Isogonal Center&homothetic center of $ABCD$ with the 2nd generation isogonal conjugate quadrangle\\ \hline
QA-P5&Isotomic Center&perspector of $ABCD$ with the isotomic conjugate quadrangle\\ \hline
QA-P6&Parabola Axes Crosspoint&intersection point of the axes of the two parabolas that can be constructed through $A$, $B$, $C$, and $D$\\ \hline
QA-P7&9-pt Homothetic Center&homothetic center of $ABCD$ with the quadrangle composed of four 2nd generation nine-point venters\\ \hline
QA-P9&QA Miquel Center&common point of the three Miquel circles of the half triangles\\ \hline
QA-P12&Orthocenter of the Diagonal Triangle&orthocenter of the triangle formed by the three diagonal points of $ABCD$\\ \hline
QA-P34&Euler-Poncelet Point of the Centroid Quadrangle&Euler-Poncelet point of the quadrangle formed by centroids of the half triangles\\ \hline
\end{tabular}
\end{center}
\end{table}

\newpage

\section{Barycentric Coordinates and Quadrilaterals}

The program we used to find results about central quadrilaterals (GeometricExplorer) is a useful
tool for discovering results, but it does not prove that these results are true.
GeometricExplorer uses numerical coordinates (to 15 digits of precision) for locating
all the points. Thus, a relationship found by this program does not constitute a proof that the result is correct,
but gives us compelling evidence for the validity of the result.

If a theorem in this paper is accompanied by a figure, this means that the
figure was drawn using either Geometer's Sketchpad or GeoGebra.
In either case, we used the drawing program to dynamically vary the points
in the figure. Noticing that the result remains true as the points vary offers
further evidence that the theorem is true. But again, this does not constitute a proof.

To prove the results that we have discovered, we use geometric methods, when possible.
If we could not find a purely geometrical proof, we turned to analytic methods using
barycentric coordinates and performing exact symbolic computation using Mathematica
and the package \texttt{baricentricas.m}\footnote{The package \texttt{baricentricas.m} written by F.~J.~G~Capit\'an can be freely downloaded from \url{http://garciacapitan.epizy.com/baricentricas/}}.

These analytic proofs are given in Mathematica Notebooks included with the supplementary material accompanying the on-line publication of this paper.

If our only ``proof'' of a particular relationship is by using numerical calculations (and not using exact computation),
then we have colored the center \no{red} in the table of relationships.

When proving results analytically, we used barycentric coordinates. We assume the reader is familiar with this coordinate system.
Given a quadrilateral $ABCD$, we set up a barycentric coordinate system using $\triangle ABC$ as the reference triangle.
We assign coordinates $(p:q:r)$ to point $D$ as shown in Figure~\ref{fig:baryCoordinatespqr}.
Note that $AB=c$, $BC=a$, and $AC=b$.

\begin{figure}[h!t]
\centering
\includegraphics[width=0.35\linewidth]{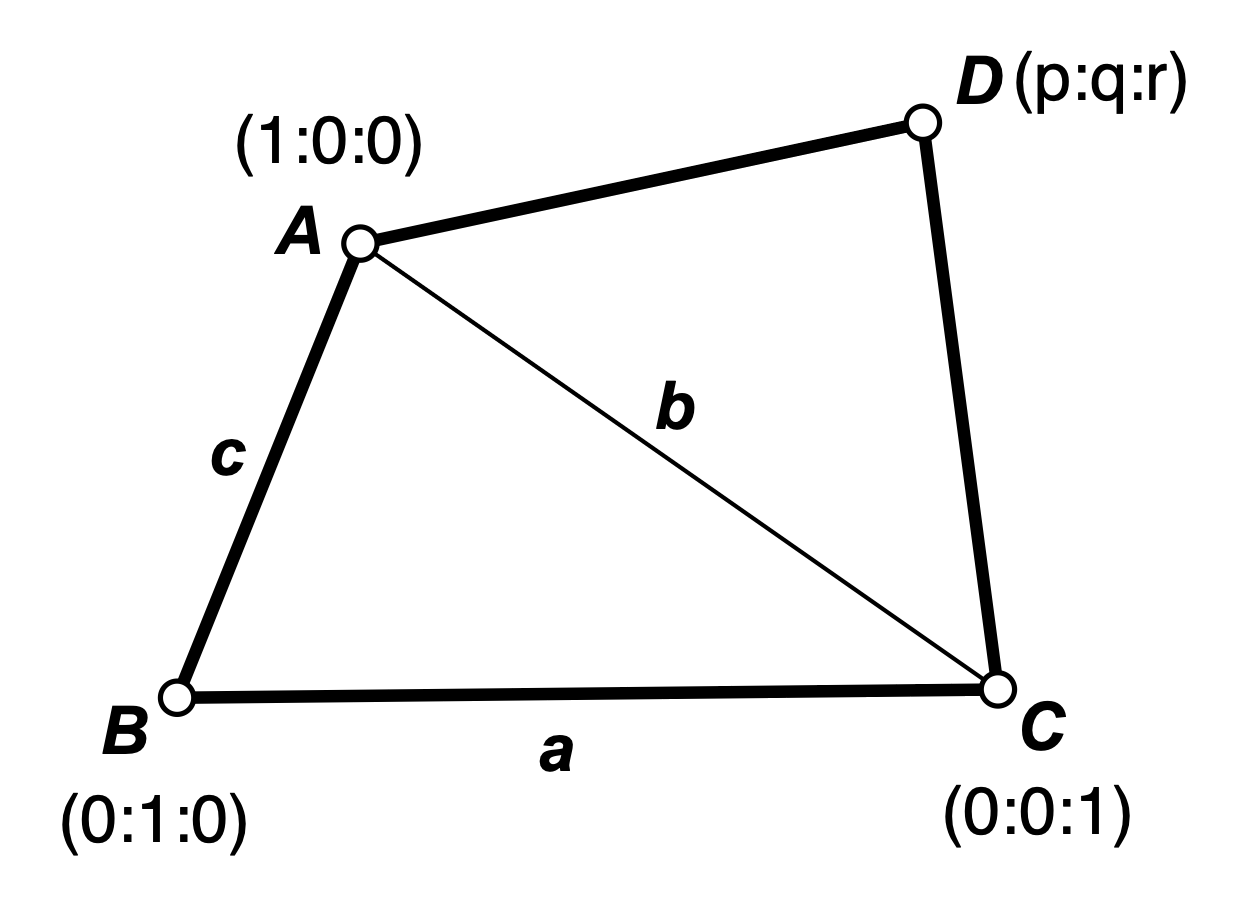}
\caption{barycentric coordinate system for quadrilateral $ABCD$}
\label{fig:baryCoordinatespqr}
\end{figure}

The barycentric coordinates for the various triangle centers were found from \cite{ETC}.
To find the coordinates of a center $(u:v:w)$ with respect to a triangle $XYZ$,
we use the function \texttt{CentroETCTriangulo} in the \texttt{baricentricas.m} package
via the call \texttt{CentroETCTriangulo[\{u,v,w\},\{ptX,ptY,ptZ\}]} where \texttt{ptX}, \texttt{ptY}, and \texttt{ptZ}, 
are the barycentric coordinates for the vertices of $\triangle XYZ$.

\newpage

When analyzing an initial quadrilateral with a special shape, we restrict the values of $p$, $q$, and $r$
by specifying a condition that $a$, $b$, $c$, $p$, $q$, and $r$ must satisfy using the conditions shown in the following table.

\begin{center}
\begin{tabular}{|p{2in}|p{3.64in}|} \hline
		\textbf{Geometrical condition}        & \textbf{Analytic condition}\\\hline
\ru             $A,B,C,D$ concyclic                  &$a^2 q r+ b^2 p r +c^2 p q =0 $  \\ \hline
\ru		$AB+CD=BC+AD$            &  $2 p r \left(a^2-2 a c+b^2+c^2\right)+p^2 (a-b-c) (a+b-c)+r^2 (a-b-c) (a+b-c)-4 c p q (a-c)+4 a q r (a-c)=0$\\ \hline
\ru		$AB+BC=AD+DC$        &  \scriptsize{$p^2 \left(a^2+2 a c-b^2+c^2\right)+p r \left(2 a^2+4 a c+2 b^2+2 c^2\right)+r^2 \left(a^2+2 a c-b^2+c^2\right)+q r \left(4 a^2+4 a c\right)+p q \left(4 a c+4 c^2\right)=0$}\\ \hline
\ru		$BC+CD=AB+AD$        &  \scriptsize{$p^2 \left(a^2-2 a c-b^2+c^2\right)+p r \left(2 a^2-4 a c+2 b^2+2 c^2\right)+r^2 \left(a^2-2 a c-b^2+c^2\right)+q r \left(4 a^2-4 a c\right)+p q \left(4 c^2-4 a c\right)=0$} \\ \hline
\ru		$AB=BC$             &$a=c$  \\ \hline
\ru		$AD\parallel BC$             &$q+r=0$  \\ \hline
\ru		$AB\parallel CD$             &$p+q=0$  \\ \hline
\ru		$AB\cdot CD=AC\cdot DA$           & \scriptsize$p(b^2-c^2)(qc^2+rb^2)+a^2\left(c^2q(p+q)-b^2r(p+r)\right)=0$ \\ \hline
\ru		$AB\cdot AC=BC\cdot CD$         & $a^4q(p+q)+a^2p\left(b^2(p+q)-qc^2\right)=b^2c^2(p+q+r)^2$ \\ \hline
\ru		$AC\perp BD$         &$b^2(p-r)=(a^2-c^2)(p+r)$ \\ \hline
\ru		$AB\perp BC$         &$b^2=a^2+c^2$ \\ \hline
\ru		$AC=BD$          &  $b^2\left(p^2+(q+r)^2+p(2q+3r)\right)=(p+r)(c^2p+a^2r)$\\ \hline
\ru		$AB^2+AC^2=BC^2+CD^2$           & \scriptsize${a^2 \left(p^2+p (q+2 r)+r (2 q+r)\right)=}\ b^2 \left(2 p^2+p (3 q+2
   r)+(q+r)^2\right) +c^2 \left(p^2+p (q+2 r)+(q+r)^2\right)$ \\ \hline
\ru		kite                   &$p+2q+r=0 \;\wedge\; b^2+q=0\;\wedge\; c^2=a^2+q+r$  \\ \hline
\ru		parallelogram          &$q+r=0\;\wedge\;p+q=0$\\ \hline
\ru		rhombus                 & $q+r=0\;\wedge\;p+q=0\;\wedge\; a=c$  \\ \hline
\ru		rectangle              & $q+r=0\;\wedge\;p+q=0\;\wedge\; b^2=a^2+c^2$ \\ \hline
\ru		isosceles trapezoid    &$b^2p+\left(a^2-c^2\right)q=0$  \\ \hline
\ru            harmonic quadrilateral& $ra^2=pc^2\;\wedge\; pb^2+2qa^2=0$ \\ \hline
\ru            orthodiagonal quadrilateral& $a^2 (p+r)+b^2 (r-p)-c^2 (p+r)=0$\\ \hline

\end{tabular}
\end{center}

\bigskip
If a quadrilateral shape is formed by a combination of conditions, then the condition used to obtain that shape is the conjunction
of the primitive conditions.

For example, a parallelogram has $AD\parallel BC$ and $AB\parallel CD$,
so the condition on $p$, $q$, and $r$ that makes $ABCD$ a parallelogram is $(q+r=0)\wedge (p+q=0)$.
A rhombus is a parallelogram with the added condition $AB=BC$, so we add in the analytical condition $a=c$.


When checking to see if a point is a notable center of quadrilateral $ABCD$, we use the CT coordinates found from \cite{EQF}.
The coordinates were scaled to get rid of any fractions. Otherwise, applying these coordinates to certain shape quadrilaterals would produce divide-by-zero errors.

\newpage
\subsection{Example}\ \\

We now give an example of how barycentric coordinates can be used to prove the results in this paper.
We show how to prove the following theorem using barycentric coordinates.

\begin{theorem}
\label{thm:odX5}
Let $ABCD$ be an orthodiagonal quadrilateral.
Let $E$, $F$, $G$, and $H$ be the $X_{5}$ -points of $\triangle BCD$, $\triangle CDA$, 
$\triangle DAB$, and $\triangle ABC$, respectively.
Then the centroid of quadrilateral $ABCD$ coincides with the diagonal point of quadrilateral $EFGH$ (Figure~\ref{fig:odX5}).
\end{theorem}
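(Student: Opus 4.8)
My plan is to give a synthetic proof, using orthodiagonality to pin down where the four points $E,F,G,H$ sit. Recall that $X_5$ is the nine-point center, so each of $E,F,G,H$ is the center of the nine-point circle of a half triangle. Let the diagonals $AC$ and $BD$ meet at the diagonal point $P$ of $ABCD$; by hypothesis $AC\perp BD$. The key first observation is that $P$ is simultaneously the foot of an altitude in each half triangle: in $\triangle ABC$ the line through $B$ perpendicular to $AC$ is $BD$ itself (because $BD\perp AC$), so the foot of the altitude from $B$ is $P$, and the same reasoning makes $P$ the foot of the altitude from $D$ in $\triangle ACD$, from $C$ in $\triangle BCD$, and from $A$ in $\triangle DAB$. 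Since the nine-point circle of a triangle passes through the feet of its altitudes and the midpoints of its sides, each of the four nine-point circles passes through $P$; moreover the two circles belonging to the triangles that share diagonal $AC$ (namely $\triangle ABC$ and $\triangle ACD$) both pass through the common side-midpoint $M_{AC}$, while the two belonging to the triangles that share $BD$ (namely $\triangle BCD$ and $\triangle DAB$) both pass through $M_{BD}$.

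The second step converts this into information about the diagonals $EG$ and $FH$ of the central quadrilateral. Because $H=X_5(\triangle ABC)$ and $F=X_5(\triangle ACD)$ are the centers of two circles meeting at $P$ and $M_{AC}$, the line $FH$ is the perpendicular bisector of their common chord $PM_{AC}$. But $P$ and $M_{AC}$ both lie on line $AC$, so this chord lies along $AC$; hence $FH\perp AC$ and $FH$ passes through the midpoint of $PM_{AC}$. Symmetrically, $EG$ is the perpendicular bisector of $PM_{BD}$, so $EG\perp BD$ through the midpoint of $PM_{BD}$. Orthodiagonality now forces the diagonals of $EFGH$ to be parallel to those of $ABCD$ ($FH\parallel BD$ and $EG\parallel AC$), which lets me locate their intersection exactly.

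For the final step I place $P$ at the origin with $AC$ along one axis and $BD$ along the other, writing $A=(\alpha,0)$, $C=(\gamma,0)$, $B=(0,\beta)$, $D=(0,\delta)$. Then $FH$ is the vertical line $x=\tfrac{\alpha+\gamma}{4}$ (through the midpoint of $P$ and $M_{AC}=(\tfrac{\alpha+\gamma}{2},0)$), and $EG$ is the horizontal line $y=\tfrac{\beta+\delta}{4}$, so the diagonal point of $EFGH$ is $\bigl(\tfrac{\alpha+\gamma}{4},\tfrac{\beta+\delta}{4}\bigr)$. This coincides with the centroid $\tfrac14(A+B+C+D)$ of $ABCD$, which is exactly the assertion.

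The main obstacle I anticipate is the first step together with its degenerate cases: the perpendicular-bisector argument requires the two nine-point circles sharing a diagonal to be distinct and to meet in two distinct points, which fails precisely when $P$ coincides with $M_{AC}$ (or $M_{BD}$), i.e. when one diagonal bisects the other, so such configurations must be excluded or treated by a limiting argument. As an alternative fully in keeping with the paper's framework, one can instead verify the identity by the barycentric recipe: take $\triangle ABC$ as reference with $D=(p:q:r)$ subject to the orthodiagonal condition $a^2(p+r)+b^2(r-p)-c^2(p+r)=0$, compute each $X_5$ via \texttt{CentroETCTriangulo} (using only the squared side lengths of the half triangles, so that no surds appear), intersect the diagonals $EG$ and $FH$, and check equality with the centroid $(2p+q+r:p+2q+r:p+q+2r)$; here the only obstacle is the size of the symbolic expressions, which the orthodiagonal relation must be used to reduce to zero.
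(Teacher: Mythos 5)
Your proof is correct, but it takes a genuinely different route from the paper's. The paper proves this theorem purely analytically: it is the worked example of Section~5, where the four $X_5$-points, the centroid $(2p+q+r:p+2q+r:p+q+2r)$, and the intersection of $EG$ and $FH$ are computed symbolically in barycentric coordinates and the coincidence is verified by Mathematica under the constraint $a^2(p+r)+b^2(r-p)-c^2(p+r)=0$. Your synthetic argument instead reuses the key observation from the paper's proof of Theorem~\ref{thm:odX4} (that the diagonal point $P$ is a common point of all four nine-point circles, being an altitude foot of each half triangle), adds that the two circles sharing a diagonal also share that diagonal's midpoint, and concludes that $FH$ and $EG$ are the perpendicular bisectors of $PM_{AC}$ and $PM_{BD}$; the final coordinate computation is then trivial. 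This is exactly the kind of ``purely geometrical proof'' the paper poses as an open question at the end of its orthodiagonal section, so your approach buys more insight (it explains \emph{why} the answer is the vertex centroid, namely as the quarter-point average), at the cost of a degeneracy check. On that last point, note that you can weaken your stated hypothesis: you do not actually need the two circles to ``meet in two distinct points'' as a common chord --- since $P$ and $M_{AC}$ each lie on both circles, $F$ and $H$ are each equidistant from $P$ and $M_{AC}$, hence both lie on the perpendicular bisector of $PM_{AC}$ whenever $P\neq M_{AC}$, regardless of whether the circles have further intersections. The only genuine exception is $P=M_{AC}$ (one diagonal bisecting the other), which the paper explicitly excludes from its study and which is in any case handled by your continuity remark or by the barycentric fallback you describe, which is precisely the paper's own method.
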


\begin{figure}[h!t]
\centering
\includegraphics[width=0.4\linewidth]{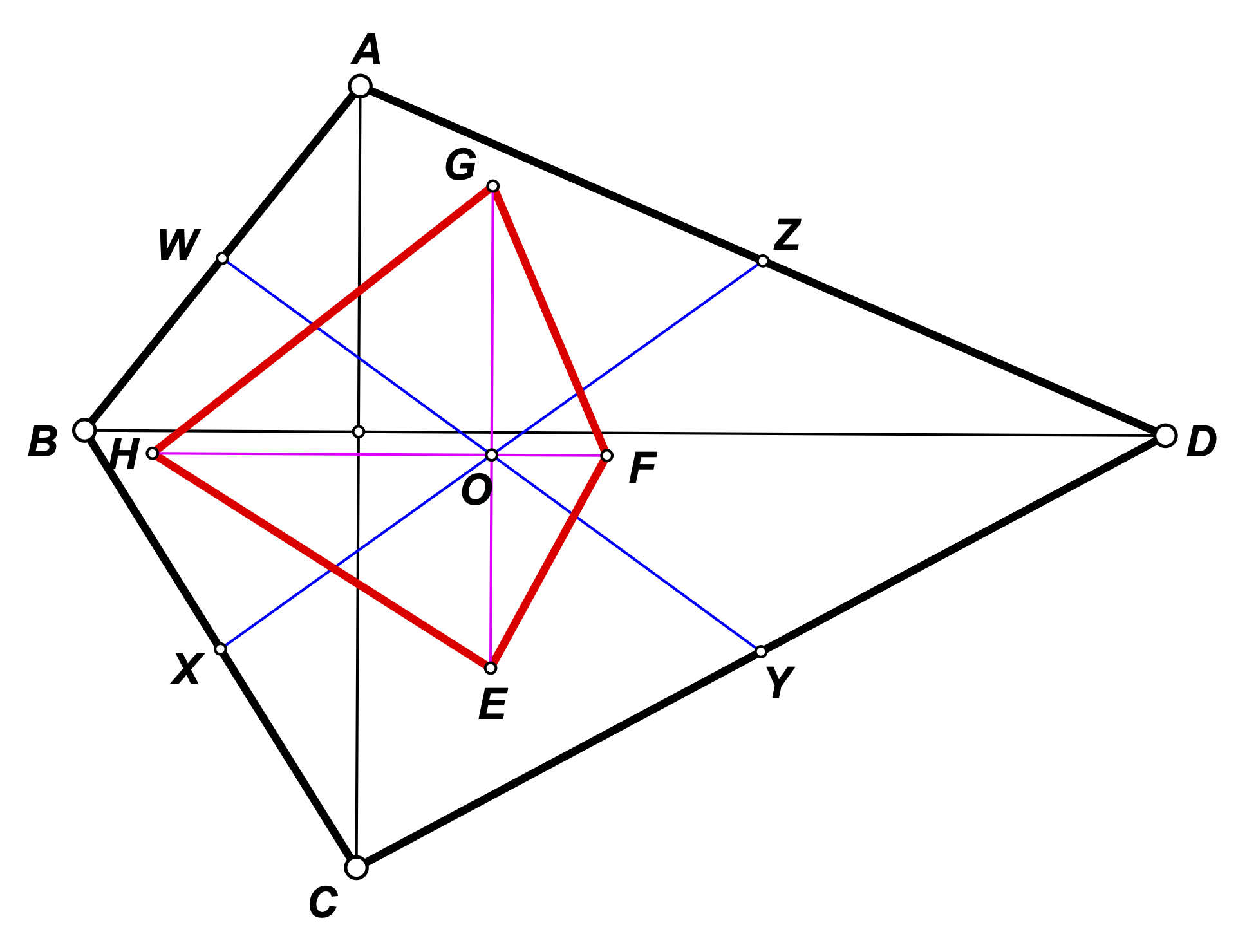}
\caption{orthodiagonal quad with $X_5$-points $\implies \m[ABCD]=\diag[EFGH]$}
\label{fig:odX5}
\end{figure}

Note that $W$, $X$, $Y$, and $Z$ are the midpoints of the sides of quadrilateral $ABCD$, making $O$ the centroid.
We need to show that $O$ coincides with the intersection of diagonals $EG$ and $FH$ of quadrilateral $EFGH$.

\begin{proof}
We begin by specifying the coordinates for the vertices of quadrilateral $ABCD$.

\begin{code}{0.4in}{5in}
\begin{verbatim}
ptA = {1:0:0};
ptB = {0:1:0};
ptC = {0:0:1};
ptD = {p:q:r};
\end{verbatim}
\end{code}

Then we use the function \texttt{CentroETCTriangulo} from the \texttt{baricentricas} package to create a routine that determines
the center $X_n$ of the four half triangles of quadrilateral $ABCD$.

\begin{code}{0.3in}{5in}
\begin{verbatim}
CentralQuadrilateral[n_] :=
   {
   Simplificar[CentroETCTriangulo[ETC[[n, 2]], {ptB, ptC, ptD}]],
   Simplificar[CentroETCTriangulo[ETC[[n, 2]], {ptA, ptC, ptD}]],
   Simplificar[CentroETCTriangulo[ETC[[n, 2]], {ptA, ptB, ptD}]],
   Simplificar[CentroETCTriangulo[ETC[[n, 2]], {ptA, ptB, ptC}]]
   };
\end{verbatim}
\end{code}

Then we use this routine to find the coordinates of $E$, $F$, $G$, and $H$.

\begin{code}{0.4in}{5in}
\begin{verbatim}
{ptE, ptF, ptG, ptH} = CentralQuadrilateral[5];
\end{verbatim}
\end{code}

\goodbreak
The result from Mathematica shows that
   $\mathrm{ptE}=\{x,y,z\}$ where
\begin{align*}
	x=&\left((b^2-c^2)^2-a^2( b^2+c^2)\right)p^2-2 a^4 q r-a^2\left(a^2+b^2-c^2\right)pr-a^2\left(a^2-b^2+c^2\right)pq,\\
	y=&\left((a^2-c^2)^2-b^2 (c^2+a^2)\right)p^2+\left(2 a^4+b^4+c^4-2 b^2 c^2-3 a^2 c^2-3 a^2 b^2\right)p q\\
	  &+\left(a^4+b^4+c^4-2 b^2 c^2-2 a^2 c^2\right)p r+a^2\left(a^2+b^2-c^2\right)qr,\\
	z=&\left((a^2-b^2)^2-c^2(a^2+b^2)\right)p^2+\left(a^4+b^4+c^4-2 b^2 c^2-2 a^2 b^2\right)pq\\
		&+\left(2 a^4+b^4+c^4-2 b^2 c^2-3 a^2 c^2-3 a^2 b^2\right)pr+a^2 q r \left(a^2-b^2+c^2\right)
\end{align*}

with similarly complicated expressions for ptF, ptG, and ptH.

Next, we find the centroid of quadrilateral $ABCD$.

\begin{code}{0.4in}{5in}
\begin{verbatim}
centroid = CentroidQuad[{ptA, ptB, ptC, ptD}];
\end{verbatim}
\end{code}

using the routine

\begin{code}{0.4in}{5in}
\begin{verbatim}
CentroidQuad[{P_, Q_, R_, S_}] := Punto[
   Recta[Medio[P, Q], Medio[R, S]],
   Recta[Medio[P, S], Medio[Q, R]]
   ];
\end{verbatim}
\end{code}

giving the result
$$\mathrm{centroid}=\{2 p + q + r, p + 2 q + r, p + q + 2 r\}.$$
Then we find the diagonal point of quadrilateral $EFGH$,

\begin{code}{0.4in}{5in}
\begin{verbatim}
dp = DiagonalPt[{ptE, ptF, ptG, ptH}];
\end{verbatim}
\end{code}

using the routine

\begin{code}{0.4in}{5in}
\begin{verbatim}
DiagonalPt[{P_, Q_, R_, S_}] := Punto[Recta[P, R], Recta[Q, S]];
\end{verbatim}
\end{code}

giving a complicated expression for dp.

Now we write down the condition that these two points coincide recalling the fact that
two barycentric coordinates represent the same point if they are proportional.

\begin{code}{0.4in}{5in}
\begin{verbatim}
sameCondition = Cross[centroid, dp] == {0, 0, 0};
\end{verbatim}
\end{code}

The resulting expression is quite complicated since the two points do not coincide in an arbitrary quadrilateral.

The next step is to find the condition that ensures that $ABCD$ is orthodiagonal.

\begin{code}{0.4in}{5in}
\begin{verbatim}
orthodiag = SonPerpendiculares[Recta[ptA, ptC], Recta[ptB, ptD]];
\end{verbatim}
\end{code}

The condition found is
$$a^2 (p+r)+b^2 (r-p)-c^2 (p+r)=0.$$
Finally, we simplify sameCondition subject to this constraint.

\begin{code}{0.4in}{5in}
\begin{verbatim}
Simplify[sameCondition, orthodiag]
\end{verbatim}
\end{code}

Mathematica responds with

\begin{code}{0.4in}{5in}
\begin{verbatim}
True
\end{verbatim}
\end{code}

indicating that the points coincide.
\end{proof}

\newpage

\section{General Quadrilaterals}

Our computer study found the following relationships between a general quadrilateral and its central quadrilateral.

\begin{center}
\begin{tabular}{|l|p{2.2in}|}
\hline
\multicolumn{2}{|c|}{\textbf{\color{blue}\large \strut Central Quadrilaterals of General Quadrilaterals}}\\ \hline
\textbf{Relationship}&\textbf{centers}\\ \hline
\ru $[ABCD]=9[EFGH]$&2\\ \hline
\ru $[ABCD]=[EFGH]$&4\\ \hline
\ru QA-P2=$\hyperb(ABCD,EFGH)$=QA-P2&4\\ \hline
\ru $\m[ABCD]=\m[EFGH]$&2\\ \hline
\ru $\m[ABCD]=\ponce[EFGH]$&5\\ \hline
\ru $\ponce[ABCD]=\ponce[EFGH]$&4\\ \hline
\ru $\ponce[EFGH]=\stein[ABCD]$&3\\ \hline
\ru QA-P1=$\ho[ABCD,EFGH]$=QA-P1&2\\ \hline
\end{tabular}
\end{center}


\subsection{Properties involving $X_2$}\ \\

The following result comes from \cite{shapes}.

\begin{theorem}
\label{thm:gqX2sim}
Let $ABCD$ be an arbitrary quadrilateral.
Let $E$, $F$, $G$, and $H$ be the $X_{2}$-points of $\triangle BCD$, $\triangle CDA$, 
$\triangle DAB$, and $\triangle ABC$, respectively.
Then quadrilaterals $ABCD$ and $EFGH$ are similar. The ratio of similitude is 3 (Figure~\ref{fig:gqX2sim}).
\end{theorem}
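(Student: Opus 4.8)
The plan is to abandon barycentrics in favor of plain position vectors, since the centroid $X_2$ has the transparent description as the average of the three vertices. Writing $A$, $B$, $C$, $D$ for the position vectors of the four vertices and setting $S=A+B+C+D$, the defining property of $X_2$ (barycentric $1:1:1$) gives at once
\[
E=\frac{B+C+D}{3}=\frac{S-A}{3},\qquad F=\frac{S-B}{3},\qquad G=\frac{S-C}{3},\qquad H=\frac{S-D}{3}.
\]
The key observation is that all four formulas are a single affine map applied to the four vertices: the map $T(P)=\tfrac13 S-\tfrac13 P$ sends $A\mapsto E$, $B\mapsto F$, $C\mapsto G$, and $D\mapsto H$.

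Next I would identify $T$ as a homothety. Solving $T(P)=P$ yields the unique fixed point $M=\tfrac14 S$, which is exactly the vertex centroid QA-P1 of $ABCD$. A one-line computation then shows $T(P)-M=-\tfrac13(P-M)$, so $T$ is the homothety with center $M$ and ratio $-\tfrac13$. Consequently $EFGH$ is the image of $ABCD$ under a homothety, so the two quadrilaterals are homothetic with center the common vertex centroid, and in particular directly similar under the correspondence $A\leftrightarrow E$, $B\leftrightarrow F$, $C\leftrightarrow G$, $D\leftrightarrow H$. The absolute value of the ratio is $\tfrac13$, so $ABCD$ is three times the linear size of $EFGH$; this is the ratio of similitude $3$ claimed in the statement.

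There is essentially no hard step here: the entire argument is the recognition that the centroid map is affine and, after subtracting its fixed point, linear with scalar $-\tfrac13$. The only points deserving care are bookkeeping ones: verifying that the fixed point really is QA-P1 (so that the statement meshes with the \ho$(ABCD,EFGH)$ entry in the general-quadrilateral table), and noting that the negative ratio produces a $180\degrees$ turn yet still gives a direct, orientation-preserving similarity, since a planar homothety of any nonzero ratio has positive determinant. If a verification in the paper's barycentric framework is desired, one can instead feed $E,\dots,H$ and the suitably scaled vertices into \texttt{baricentricas} and confirm that the cross products of corresponding difference vectors vanish; but the vector computation above is already a complete conceptual proof and explains why the ratio is exactly $3$.
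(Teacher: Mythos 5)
Your proof is correct. Note, however, that the paper does not actually prove this theorem at all: it is stated with the remark ``The following result comes from \cite{shapes}'' and the proof is deferred to that earlier paper, so there is no in-text argument to compare against. Your vector computation is a clean, self-contained replacement: writing $S=A+B+C+D$, the identity $E=\tfrac13(S-A)$ (and its analogues) exhibits the vertex map $A\mapsto E$, \dots, $D\mapsto H$ as the single affine map $T(P)=\tfrac13 S-\tfrac13 P$, whose unique fixed point is $\tfrac14 S$ (the vertex centroid QA-P1) and which satisfies $T(P)-\tfrac14S=-\tfrac13\bigl(P-\tfrac14S\bigr)$. This gives the similarity with ratio $3$ at once, and as a bonus it simultaneously proves the paper's Theorems \ref{thm:gqX2area}, \ref{thm:gqX2hom}, and \ref{thm:gqX2m} (the area ratio $9$, the homothety centered at the quadrangle centroid, and the coincidence of centroids), for which the paper gives separate derivations. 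Your side remarks are also sound: a planar homothety of ratio $-\tfrac13$ has determinant $\tfrac19>0$, so the similarity is direct despite the half-turn. The only caveat worth recording is that the argument is special to $X_2$, since it relies on the centroid being an affine (indeed linear) function of the vertices; it does not generalize to other centers, which is why the rest of the paper resorts to barycentric computation.
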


\begin{figure}[h!t]
\centering
\includegraphics[width=0.35\linewidth]{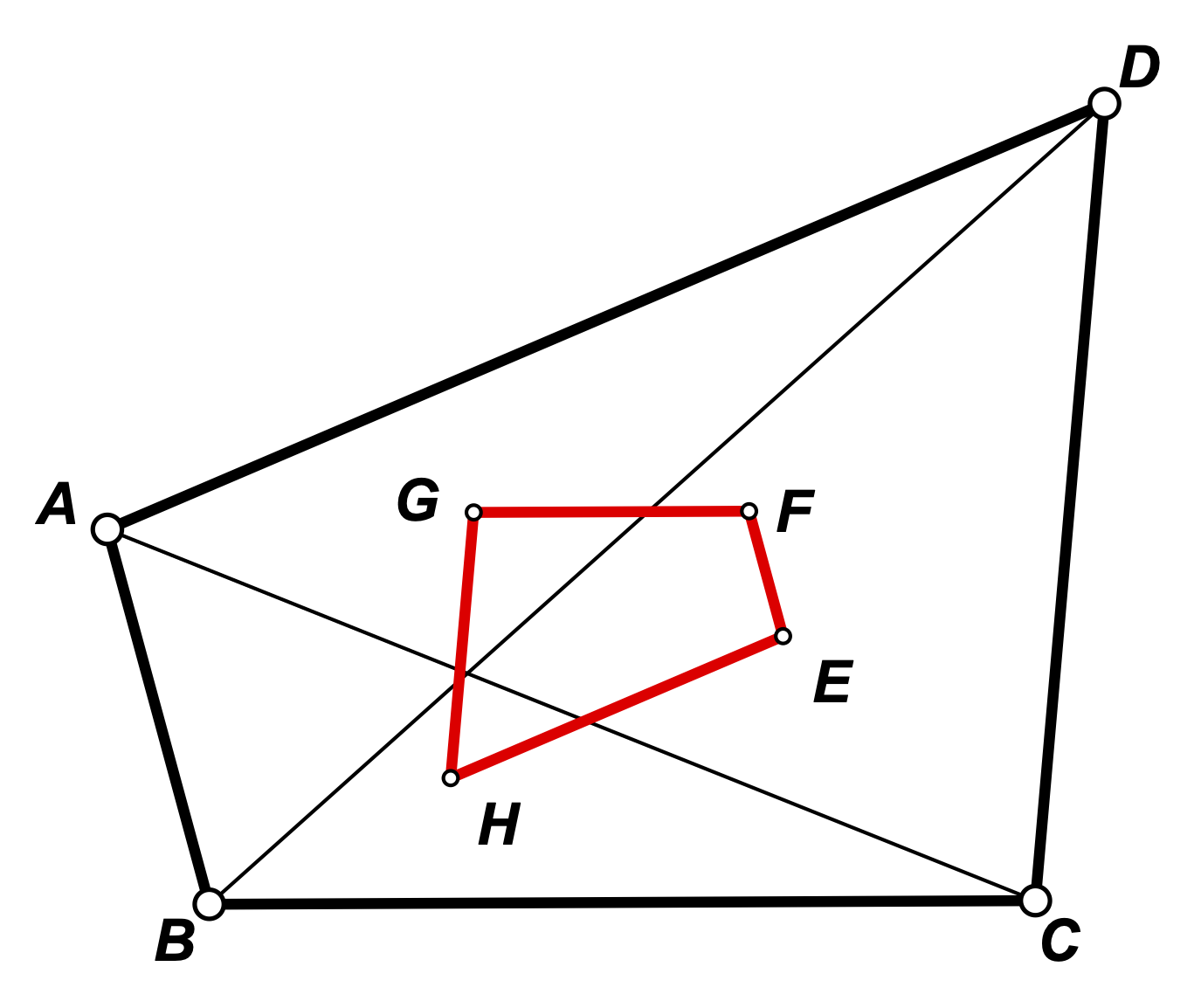}
\caption{general quadrilateral with $X_2$-points $\implies ABCD\sim EFGH$}
\label{fig:gqX2sim}
\end{figure}

\begin{theorem}
\label{thm:gqX2area}
Let $ABCD$ be an arbitrary quadrilateral.
Let $E$, $F$, $G$, and $H$ be the $X_{2}$-points of $\triangle BCD$, $\triangle CDA$, 
$\triangle DAB$, and $\triangle ABC$, respectively.
Then $[ABCD]=9[EFGH]$ (Figure~\ref{fig:gqX2sim}).
\end{theorem}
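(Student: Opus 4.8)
The plan is to obtain this as an immediate consequence of the preceding Theorem~\ref{thm:gqX2sim}. That result already establishes $ABCD \sim EFGH$ with ratio of similitude $3$. Since the areas of two similar plane figures are in the ratio of the square of their linear scale factor, the area ratio is forced to be $3^2 = 9$, so $[ABCD] = 9[EFGH]$. The only point requiring care is to confirm that the ``ratio of similitude $3$'' in Theorem~\ref{thm:gqX2sim} refers to the ratio of corresponding lengths (with $ABCD$ the larger figure), not to the ratio of areas; once this is fixed, the area statement is automatic.

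For a self-contained argument I would instead exhibit the similarity explicitly as a homothety, which simultaneously recovers Theorem~\ref{thm:gqX2sim} and yields the area claim. Working with position vectors, the $X_2$-point (centroid) of each half triangle is the average of its three vertices, so
$$ E = \tfrac{1}{3}(B+C+D), \quad F = \tfrac{1}{3}(C+D+A), \quad G = \tfrac{1}{3}(D+A+B), \quad H = \tfrac{1}{3}(A+B+C). $$
Writing $S = A+B+C+D$, each center has the compact form $(S - V)/3$, where $V$ is the vertex ``opposite'' to that triangle ($A$ for $E$, $B$ for $F$, and so on).

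The key step is to recognize the map $V \mapsto (S-V)/3$ as a homothety. Solving $V = (S-V)/3$ gives the unique fixed point $V = S/4$, the vertex centroid (QA-P1) of $ABCD$, and one checks directly that $(S-V)/3 = \tfrac{S}{4} - \tfrac{1}{3}\bigl(V - \tfrac{S}{4}\bigr)$. Hence $EFGH$ is the image of $ABCD$ under the homothety with center $S/4$ and ratio $-\tfrac{1}{3}$. Because a homothety of ratio $k$ multiplies every area by $k^2$, we obtain $[EFGH] = \bigl(\tfrac{1}{3}\bigr)^2 [ABCD] = [ABCD]/9$, which rearranges to $[ABCD] = 9[EFGH]$. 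This homothety also explains the entry QA-P1=$\ho[ABCD,EFGH]$=QA-P1 in the general-quadrilateral table, since its center is the vertex centroid of both quadrilaterals.

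There is no genuine obstacle here: once the homothety is identified, both the similarity and the area ratio follow instantly from the fact that a homothety of ratio $k$ scales lengths by $|k|$ and areas by $k^2$. The only care needed is bookkeeping the vertex-to-triangle correspondence (so that the sides of $EFGH$ line up with the correct sides of $ABCD$) and noting that the negative sign of the ratio merely reflects the $180\degrees$ rotation relating the two figures, which does not affect areas.
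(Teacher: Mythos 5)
Your first paragraph is exactly the paper's proof: the authors deduce $[ABCD]=9[EFGH]$ in one line from Theorem~\ref{thm:gqX2sim}, citing the fact that areas of similar figures scale as the square of the ratio of corresponding sides. Your self-contained alternative is also correct and worth noting: the identification of $E,F,G,H$ as $(S-V)/3$ with $S=A+B+C+D$, and of the map $V\mapsto(S-V)/3$ as the homothety with center $S/4$ and ratio $-\tfrac{1}{3}$, is a clean computation that simultaneously proves Theorem~\ref{thm:gqX2sim}, Theorem~\ref{thm:gqX2hom} (homothety centered at the vertex centroid), Theorem~\ref{thm:gqX2m}, and the area ratio, whereas the paper derives these facts separately (the similarity is imported from \cite{shapes} and the perspectivity from \cite{QA-P1}). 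The only caveat is that your homothety argument is strictly stronger than what this theorem asks for, so as a proof of Theorem~\ref{thm:gqX2area} alone the one-line deduction from the preceding theorem is all that is needed.
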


\begin{proof}
This follows immediately from Theorem~\ref{thm:gqX2sim} since the ratio of the areas of two similar
figures is equal to the square of the ratio of their sides.
\end{proof}

\newpage

\begin{theorem}
\label{thm:gqX2hom}
Let $ABCD$ be an arbitrary quadrilateral.
Let $E$, $F$, $G$, and $H$ be the $X_{2}$-points of $\triangle BCD$, $\triangle CDA$, 
$\triangle DAB$, and $\triangle ABC$, respectively.
Then quadrilaterals $ABCD$ and $EFGH$ are homothetic. The homothetic center is the centroid of quadrilateral $ABCD$ (Figure~\ref{fig:gqX2hom}).
\end{theorem}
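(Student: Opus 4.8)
The plan is to work with ordinary position vectors (equivalently, normalized affine coordinates) for the four vertices and to exhibit the homothety explicitly, rather than merely invoking the similarity of Theorem~\ref{thm:gqX2sim}. Write $A$, $B$, $C$, $D$ for the position vectors of the vertices and set $S = A+B+C+D$. Since $X_2$ is the centroid of a triangle, the four half-triangle centers are the plain averages of the relevant triples, which I would immediately rewrite in uniform form:
\[
E=\tfrac13(B+C+D)=\tfrac13(S-A),\quad F=\tfrac13(S-B),\quad G=\tfrac13(S-C),\quad H=\tfrac13(S-D).
\]

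Next I would propose the vertex centroid $O=\tfrac14 S$ of $ABCD$ as the homothetic center and check that the homothety $h$ with center $O$ and ratio $-\tfrac13$ carries each vertex to its corresponding central point. For an arbitrary point $P$,
\[
h(P)=O+\bigl(-\tfrac13\bigr)(P-O)=\tfrac14 S-\tfrac13 P+\tfrac1{12}S=\tfrac13(S-P),
\]
so that $h(A)=E$, $h(B)=F$, $h(C)=G$, and $h(D)=H$. This single computation simultaneously establishes that $ABCD$ and $EFGH$ are homothetic, that the center of the homothety is the centroid $O$ of $ABCD$, and (since $\lvert-\tfrac13\rvert=\tfrac13$) that the ratio of similitude is $3$, in agreement with Theorem~\ref{thm:gqX2sim}.

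There is essentially no hard step here; the display above is the whole proof, and I expect the only care needed to be bookkeeping. First, one should confirm the vertex correspondence $A\leftrightarrow E$, $B\leftrightarrow F$, $C\leftrightarrow G$, $D\leftrightarrow H$, matching the convention that triangle $i$ (hence center $i$) is opposite vertex $i$. Second, the homothety ratio is negative, so $h$ is a point reflection composed with a scaling: its linear part is $-\tfrac13 I$, which has positive determinant, so $h$ is a genuine (orientation-preserving) homothety fixing $O$, even though it reverses the cyclic sense in which the labeled quadrilateral is traversed. If one prefers to stay within the paper's barycentric framework, the same conclusion can be cross-checked there: with $O$ the centroid computed as $\{2p+q+r,\,p+2q+r,\,p+q+2r\}$ in the proof of Theorem~\ref{thm:odX5}, verifying that the $3\times3$ determinant with rows $A$, $O$, $E$ vanishes (and cyclically for $B,O,F$, etc.) shows each vertex is collinear with $O$ and its image, which together with the ratio from Theorem~\ref{thm:gqX2sim} pins down the homothety.
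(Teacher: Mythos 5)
Your computation is correct and complete: with $S=A+B+C+D$ one indeed has $E=\tfrac13(S-A)$ and its analogues, and the map $P\mapsto\tfrac13(S-P)$ is exactly the homothety with center $\tfrac14S$ and ratio $-\tfrac13$, which matches the paper's barycentric expression $\{2p+q+r,\,p+2q+r,\,p+q+2r\}$ for the centroid. This is, however, a genuinely different route from the paper's. The paper argues synthetically: it cites the known fact (the characterization of the quadrangle centroid QA-P1) that the four lines joining each vertex to the centroid of the opposite half triangle concur at the centroid of $ABCD$, so $ABCD$ and $EFGH$ are perspective with perspector the centroid; it then combines this with the similarity already established in Theorem~\ref{thm:gqX2sim} to conclude the quadrilaterals are homothetic. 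What the paper's approach buys is brevity and continuity with its catalogue of quadrangle centers. What yours buys is self-containedness and more: the single identity $h(P)=\tfrac13(S-P)$ simultaneously proves Theorem~\ref{thm:gqX2sim} (similarity with ratio $3$), this theorem, and Theorem~\ref{thm:gqX2m} (equal centroids), rather than taking the similarity as an input. It also sidesteps the mildly delicate inference ``perspective and similar $\Rightarrow$ homothetic,'' which the paper asserts without comment; your explicit map makes that step unnecessary. Your bookkeeping remarks (the vertex correspondence dictated by ``triangle $i$ opposite vertex $i$,'' and the sign of the ratio) are exactly the right points to check, and they check out.
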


\begin{figure}[h!t]
\centering
\includegraphics[width=0.35\linewidth]{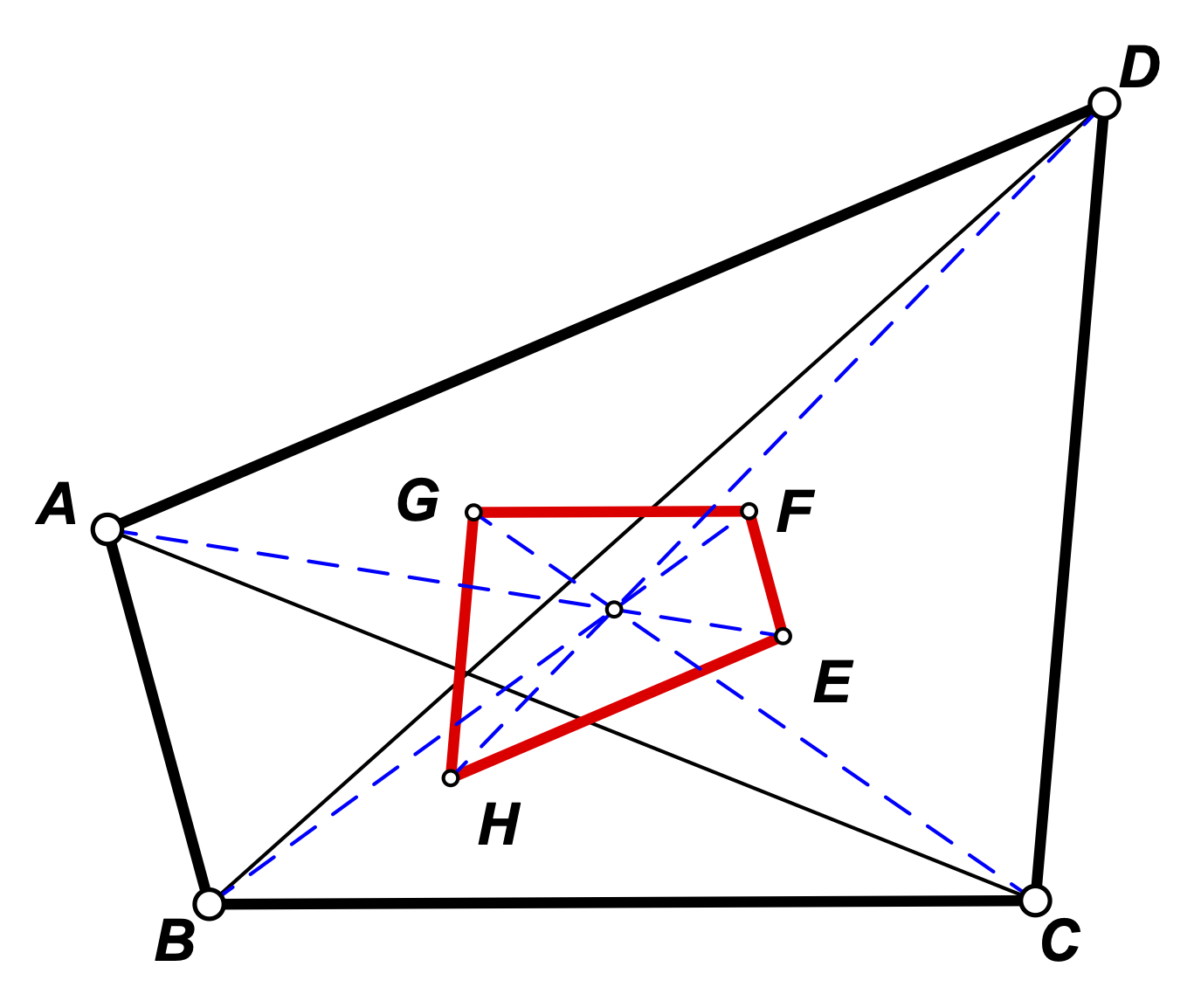}
\caption{general quadrilateral with $X_2$-points $\implies \ho(ABCD,EFGH)$}
\label{fig:gqX2hom}
\end{figure}

\begin{proof}
From \cite{QA-P1} we know that the lines from the vertices of a triangle to the centroid of the opposite half triangle
meet in a point known as the centroid of the quadrilateral. Thus, the quadrilaterals are perspective.
Since they are also similar, this means they are homothetic.
\end{proof}

\begin{theorem}
\label{thm:gqX2m}
Let $ABCD$ be an arbitrary quadrilateral.
Let $E$, $F$, $G$, and $H$ be the $X_{2}$-points of $\triangle BCD$, $\triangle CDA$, 
$\triangle DAB$, and $\triangle ABC$, respectively.
Then quadrilaterals $ABCD$ and $EFGH$ have the same centroid (Figure~\ref{fig:gqX2hom}).
\end{theorem}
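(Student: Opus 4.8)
The plan is to work with position vectors and to exploit the fact that the centroid used here (the point QA-P1, the center of gravity of equal masses at the four vertices) is simply the average of the four vertex vectors. Writing $A,B,C,D$ for the position vectors of the vertices, the centroid of $ABCD$ is $\tfrac14(A+B+C+D)$ and the centroid of $EFGH$ is $\tfrac14(E+F+G+H)$. It therefore suffices to prove that $E+F+G+H=A+B+C+D$.

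First I would record the position vectors of the four half-triangle centroids. Since $X_2$ is the ordinary triangle centroid, $E=\tfrac13(B+C+D)$, $F=\tfrac13(C+D+A)$, $G=\tfrac13(D+A+B)$, and $H=\tfrac13(A+B+C)$. Summing these, I would observe that each vertex of $ABCD$ belongs to exactly three of the four half triangles (all but the one opposite to it), so each of $A,B,C,D$ occurs exactly three times in the total. Hence $E+F+G+H=\tfrac13\cdot 3(A+B+C+D)=A+B+C+D$, and dividing by $4$ gives the asserted coincidence of centroids.

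An even shorter route, which I would mention as an alternative, is to deduce the statement directly from Theorem~\ref{thm:gqX2hom}. A homothety is an affine map and so commutes with the operation of averaging vertices; it therefore carries the centroid of $ABCD$ to the centroid of $EFGH$. But the homothety taking $ABCD$ to $EFGH$ is centered at the centroid of $ABCD$, so that point is fixed, and consequently the centroid of $EFGH$ coincides with the centroid of $ABCD$.

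There is no genuine obstacle here: the only things to get right are the bookkeeping of which vertices lie in which half triangle and the recognition that the relevant ``centroid'' is the vertex centroid rather than, say, the centroid of the region. If one preferred the barycentric framework used elsewhere in the paper, the same computation goes through verbatim, since the expression $\tfrac14(2p+q+r,\,p+2q+r,\,p+q+2r)$ for the centroid recorded in the proof of Theorem~\ref{thm:odX5} is precisely the barycentric form of $\tfrac14(A+B+C+D)$, and summing the barycentric centroids of the four half triangles reproduces it.
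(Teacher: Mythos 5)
Your proposal is correct, and your primary argument is genuinely different from the paper's. The paper proves this as a one-line corollary of Theorem~\ref{thm:gqX2hom}: the homothety carrying $ABCD$ to $EFGH$ preserves centroids, and its center is the centroid of $ABCD$, which is therefore fixed --- exactly your ``alternative'' route. Your main argument instead computes directly: since $X_2$ is the ordinary triangle centroid and QA-P1 is the vertex centroid (the average of the four vertices, as confirmed by the barycentric expression $(2p+q+r:p+2q+r:p+q+2r)$ in the proof of Theorem~\ref{thm:odX5}), the identity $E+F+G+H=\tfrac13\cdot 3(A+B+C+D)=A+B+C+D$ follows from the observation that each vertex lies in exactly three of the four half triangles. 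This direct computation is more elementary and self-contained: it does not depend on Theorem~\ref{thm:gqX2hom}, which in turn rests on the similarity result imported from \cite{shapes} and the concurrence fact from \cite{QA-P1}. What the paper's route buys is economy within its own logical structure, since the homothety had already been established; what your route buys is independence --- it would prove the centroid coincidence even to a reader who skipped the preceding two theorems, and it makes transparent exactly why the constant $3$ appears.
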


\begin{proof}
This follows from Theorem~\ref{thm:gqX2hom} since a homothety maps the centroid of a figure into the centroid of the new figure
and the center of the homothety is the centroid of quadrilateral $ABCD$.
\end{proof}

\subsection{Properties involving $X_3$}\ \\

\begin{theorem}
\label{thm:gqX3}
Let $ABCD$ be an arbitrary quadrilateral.
Let $E$, $F$, $G$, and $H$ be the $X_{3}$-points of $\triangle BCD$, $\triangle CDA$, 
$\triangle DAB$, and $\triangle ABC$, respectively.
Then  $$\ponce[EFGH]=\stein[ABCD].$$
\end{theorem}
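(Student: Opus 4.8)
The plan is to follow the analytic template used to prove Theorem~\ref{thm:odX5}, since both the Poncelet point and the Steiner point have coordinate descriptions that make a purely synthetic argument awkward. I would place the reference triangle so that $A=(1:0:0)$, $B=(0:1:0)$, $C=(0:0:1)$, and $D=(p:q:r)$, with $a$, $b$, $c$ the side lengths of $\triangle ABC$ as in Figure~\ref{fig:baryCoordinatespqr}. The first substantive step is to locate the four circumcenters. Using the circumcenter center function $X_3$ together with \texttt{CentroETCTriangulo} applied to the vertex sets $\{B,C,D\}$, $\{A,C,D\}$, $\{A,B,D\}$, $\{A,B,C\}$ produces barycentric coordinates for $E$, $F$, $G$, and $H$ as polynomials in $a,b,c,p,q,r$ (the squared side lengths of the half triangles being themselves polynomials in these parameters via the barycentric distance formula).

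Before computing, it is worth recording a structural observation that both organizes the calculation and explains why the result should hold. Because $E$ and $F$ are the circumcenters of the two half triangles sharing side $CD$, the line $EF$ is the perpendicular bisector of $CD$; similarly $FG$, $GH$, $HE$ are the perpendicular bisectors of $DA$, $AB$, $BC$, and the diagonals $EG$, $FH$ are the perpendicular bisectors of $BD$, $CA$. Thus every edge of $EFGH$ is perpendicular to the corresponding edge of $ABCD$. In particular the half triangle $\triangle EFH$ of $EFGH$ has its three sides on the perpendicular bisectors of $CA$, $CB$, $CD$, so its altitudes are parallel to $CA$, $CB$, $CD$. This rigidly ties the nine-point circles of the half triangles of $EFGH$ to the metric data of $ABCD$, which is exactly the data encoded by the Steiner point.

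I would then compute $\ponce[EFGH]$, the Euler-Poncelet point of $EFGH$, which is the common point of the nine-point circles of $\triangle FGH$, $\triangle EGH$, $\triangle EFH$, $\triangle EFG$. Concretely I would form two of these nine-point circles, intersect them, and select the finite common point (equivalently, apply the QA-P2 coordinate formula from \cite{EQF} to the four vertices $E,F,G,H$). Separately I would compute $\stein[ABCD]$, the Gergonne-Steiner point, directly from its \cite{EQF} coordinate formula in the $(p,q,r)$ system. Finally, exactly as in the proof of Theorem~\ref{thm:odX5}, I would test coincidence by forming the cross product of the two coordinate triples and checking that it simplifies to $(0,0,0)$ identically in $a,b,c,p,q,r$; since the reference quadrilateral is general, no side constraint is imposed and the simplification must hold with no hypotheses.

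The main obstacle will be expression swell. The circumcenter coordinates are already dense polynomials of high degree in the six parameters, and passing to the nine-point circles of the half triangles of $EFGH$ and intersecting them squares this complexity, so the raw expression for $\ponce[EFGH]$ is enormous. The feasibility of the proof therefore rests on aggressive symbolic simplification: clearing common factors from each circumcenter before proceeding, keeping $a^2,b^2,c^2$ and the half-triangle side lengths as abbreviations rather than expanding them, and letting \texttt{Simplify} reduce the final cross product. If the direct intersection proves intractable, the perpendicular-bisector description of the edges of $EFGH$ gives an alternative, lower-degree route to each nine-point circle and should be used instead.
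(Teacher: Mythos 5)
Your proposal matches the paper's proof in approach: the authors state only that their proof of Theorem~\ref{thm:gqX3} is analytical using barycentric coordinates (with the computation relegated to the supplementary Mathematica notebooks), following exactly the template of Theorem~\ref{thm:odX5} that you describe -- compute $E,F,G,H$ via \texttt{CentroETCTriangulo}, form the two quadrangle centers from their coordinate formulas, and verify that the cross product of the coordinate triples vanishes identically. Your perpendicular-bisector observation is correct and a nice organizing remark, but it is supplementary motivation rather than a departure from the paper's method.
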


Our proof of Theorem~\ref{thm:gqX3} is analytical using barycentric coordinates.

\begin{open}
Is there a purely geometrical proof of Theorem~\ref{thm:gqX3}?
\end{open}

\subsection{Properties involving $X_4$}\ \\

The following result comes from \cite{shapes}.

\begin{theorem}
\label{thm:gqX4}
Let $ABCD$ be an arbitrary quadrilateral.
Let $E$, $F$, $G$, and $H$ be the $X_{4}$-points of $\triangle BCD$, $\triangle CDA$, 
$\triangle DAB$, and $\triangle ABC$, respectively.
Then quadrilaterals $ABCD$ and $EFGH$ have the same area (Figure~\ref{fig:gqX4}).
\end{theorem}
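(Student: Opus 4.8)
The plan is to work with position vectors and exploit the Euler relation between the orthocenter and the circumcenter of a triangle. Identify each of $A,B,C,D$ with its position vector, and write $u\times v=u_xv_y-u_yv_x$ for the scalar cross product in the plane, so that the signed area of any quadrilateral $PQRS$ is $[PQRS]=\tfrac12\,\vec{PR}\times\vec{QS}$. Recall that a triangle with vertices $P,Q,R$ and circumcenter $O$ has orthocenter $P+Q+R-2O$. Denoting by $O_E,O_F,O_G,O_H$ the circumcenters of $\triangle BCD,\triangle CDA,\triangle DAB,\triangle ABC$, we thus have $E=B+C+D-2O_E$, and similarly $F=C+D+A-2O_F$, $G=D+A+B-2O_G$, $H=A+B+C-2O_H$.

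First I would compute the diagonal vectors of $EFGH$. A short cancellation gives
\[
\vec{EG}=(A-C)+2(O_E-O_G),\qquad \vec{FH}=(B-D)+2(O_F-O_H).
\]
The key geometric observation is that consecutive half-triangles share a diagonal of $ABCD$: triangles $BCD$ and $DAB$ share $BD$, so their circumcenters $O_E,O_G$ both lie on the perpendicular bisector of $BD$; likewise $O_F,O_H$ both lie on the perpendicular bisector of $CA$. Hence, writing $\mathbf{c}_1=A-C$ and $\mathbf{c}_2=B-D$, the correction vectors $\mathbf{w}_2:=O_E-O_G$ and $\mathbf{w}_1:=O_F-O_H$ satisfy $\mathbf{w}_2\perp\mathbf{c}_2$ and $\mathbf{w}_1\perp\mathbf{c}_1$, so that $\vec{EG}=\mathbf{c}_1+2\mathbf{w}_2$ and $\vec{FH}=\mathbf{c}_2+2\mathbf{w}_1$. (As a sanity check, when $ABCD$ is cyclic all four circumcenters coincide, so $\mathbf{w}_1=\mathbf{w}_2=0$ and $EFGH$ is merely a point reflection of $ABCD$; this recovers the classical congruence in the cyclic case.)

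Next I would expand the area. Since $2[ABCD]=\mathbf{c}_1\times\mathbf{c}_2$ and $2[EFGH]=\vec{EG}\times\vec{FH}$, we obtain
\[
2[EFGH]=\mathbf{c}_1\times\mathbf{c}_2+2\,\mathbf{c}_1\times\mathbf{w}_1+2\,\mathbf{w}_2\times\mathbf{c}_2+4\,\mathbf{w}_2\times\mathbf{w}_1,
\]
so the theorem is equivalent to the identity $\mathbf{c}_1\times\mathbf{w}_1+\mathbf{w}_2\times\mathbf{c}_2+2\,\mathbf{w}_2\times\mathbf{w}_1=0$. Writing $R$ for rotation by $90^\circ$ and using $\mathbf{w}_1=\lambda_1 R\mathbf{c}_1$, $\mathbf{w}_2=\lambda_2 R\mathbf{c}_2$ (possible precisely because each $\mathbf{w}_i$ is perpendicular to the matching $\mathbf{c}_i$), together with $u\times Rv=u\cdot v$ and $Ru\times Rv=u\times v$, this collapses to the scalar identity
\[
\lambda_1|\mathbf{c}_1|^2-\lambda_2|\mathbf{c}_2|^2-2\lambda_1\lambda_2\,(\mathbf{c}_1\times\mathbf{c}_2)=0 .
\]

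The main obstacle is this last identity, for which the scalars $\lambda_1,\lambda_2$ must be made explicit. Each $\lambda_i$ measures how far the two circumcenters slide apart along the perpendicular bisector of a diagonal; using the fact that the signed distance from a circumcenter to a chord equals half the chord times the cotangent of the inscribed angle subtending it, $\lambda_1$ can be expressed through the angles of $ABCD$ at $B$ and $D$ (which subtend the diagonal $CA$ in $\triangle ABC$ and $\triangle CDA$) and $\lambda_2$ through the angles at $A$ and $C$ (which subtend $BD$). Substituting these and clearing denominators reduces the displayed identity to a trigonometric relation among the four angles and the two diagonal lengths of $ABCD$, which follows from the law of sines together with the law of cosines across the diagonals. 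Carrying out this cancellation cleanly is the delicate part; should it prove unwieldy, the identity can instead be confirmed by the barycentric-coordinate method used for Theorem~\ref{thm:odX5}, placing $A,B,C$ as the reference triangle and $D=(p:q:r)$, computing the four $X_4$ points with \texttt{CentroETCTriangulo}, and verifying that the signed areas of $ABCD$ and $EFGH$ agree identically in $p,q,r$.
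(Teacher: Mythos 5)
Your reduction is correct and, as far as it goes, elegant: the Euler relation $E=B+C+D-2O_E$ (etc.), the cancellation giving $\vec{EG}=(A-C)+2(O_E-O_G)$ and $\vec{FH}=(B-D)+2(O_F-O_H)$, and the observation that each circumcenter difference lies on the perpendicular bisector of the shared diagonal are all right, and I have checked numerically that the scalar identity
\[
\lambda_1\lvert\mathbf{c}_1\rvert^2-\lambda_2\lvert\mathbf{c}_2\rvert^2-2\lambda_1\lambda_2\,(\mathbf{c}_1\times\mathbf{c}_2)=0
\]
you arrive at is indeed true. The problem is that this identity is where the entire content of the theorem now resides, and you do not prove it. Saying that it ``follows from the law of sines together with the law of cosines across the diagonals'' is an assertion, not a derivation; you yourself flag the cancellation as ``the delicate part'' and offer to hand it to Mathematica. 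Concretely, what is missing is: (i) the explicit values $\lambda_1=\pm\tfrac12(\cot B+\cot D)$ and $\lambda_2=\pm\tfrac12(\cot A+\cot C)$ with a consistent sign convention (the signs are genuinely treacherous here, since a circumcenter crosses its chord exactly when the inscribed angle passes $90^\circ$, and your quadrilateral angles can be obtuse); and (ii) a proof of the resulting relation $(\cot B+\cot D)\,e^2+(\cot A+\cot C)\,f^2=\pm(\cot A+\cot C)(\cot B+\cot D)\,ef\sin\theta$ among the angles, the diagonal lengths $e,f$, and the diagonal angle $\theta$. Until (ii) is done, this is a proof outline, not a proof.

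For comparison: the paper itself gives no proof of this theorem at all --- it is imported from the authors' earlier paper \cite{shapes} --- so there is no in-paper argument to measure yours against. Your vector approach is a genuinely attractive alternative (the cyclic case falling out as a point reflection is a nice dividend, matching the congruence $ABCD\cong EFGH$ listed in the cyclic table), but to be usable it needs the final identity carried through honestly, or else you should commit to the barycentric verification you mention as the fallback, in which case the vector setup becomes motivation rather than proof.
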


\begin{figure}[h!t]
\centering
\includegraphics[width=0.4\linewidth]{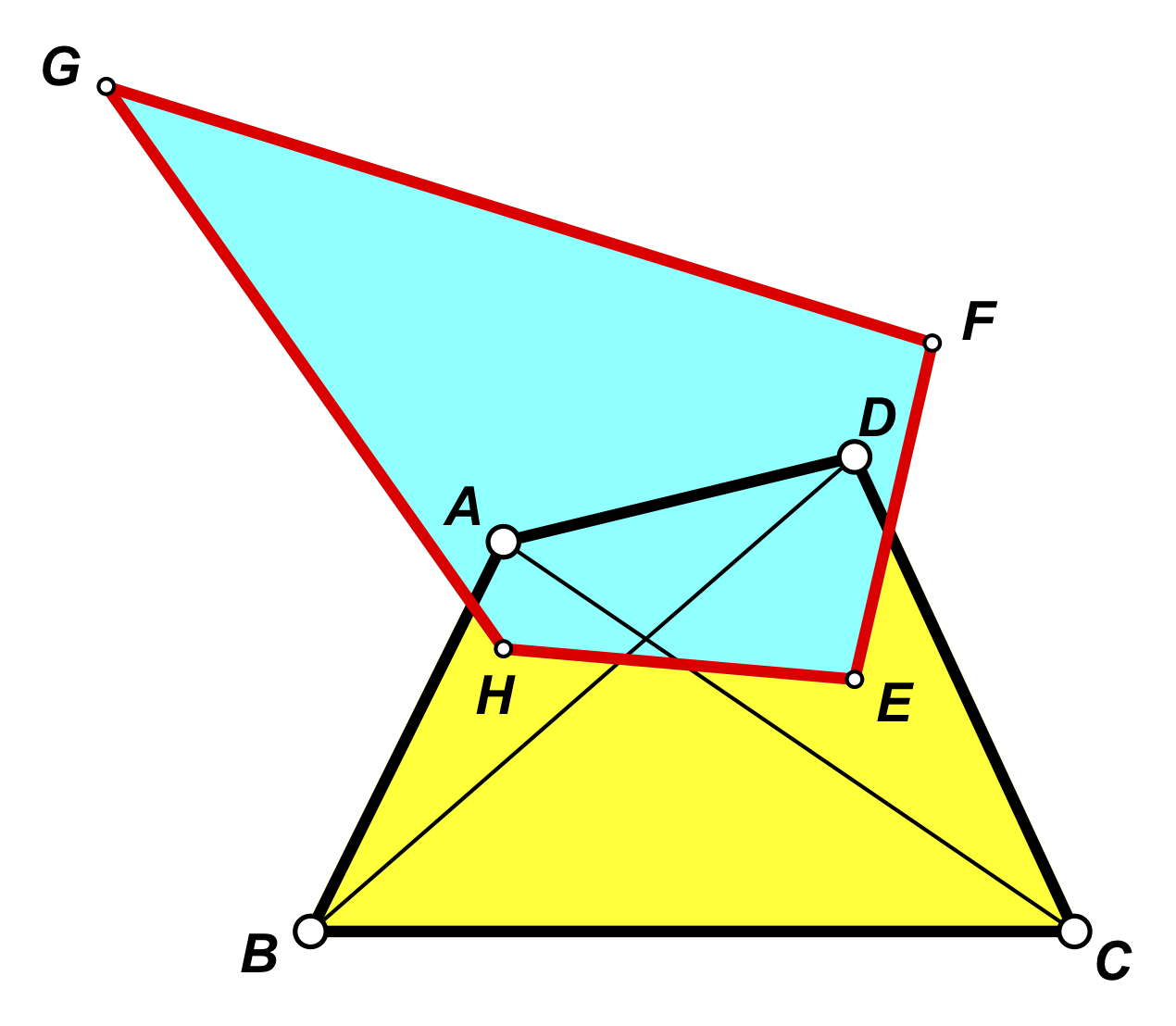}
\caption{general quadrilateral with $X_4$-points $\implies [ABCD]=[EFGH]$}
\label{fig:gqX4}
\end{figure}

\newpage

The following result comes from \cite{QA-P2}.

\begin{theorem}
\label{thm:gqX4conic}
Let $ABCD$ be an arbitrary quadrilateral.
Let $E$, $F$, $G$, and $H$ be the $X_{4}$-points of $\triangle BCD$, $\triangle CDA$, 
$\triangle DAB$, and $\triangle ABC$, respectively.
Then quadrilaterals $ABCD$ and $EFGH$ have a common circumconic (Figure~\ref{fig:gqX4conic}).
The conic is a rectangular hyperbola and
the center of the conic, $O$, is the Euler-Poncelet Point (QA-P2) of both quadrilaterals $ABCD$ and $EFGH$.
\end{theorem}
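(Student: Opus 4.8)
The plan is to exhibit a single rectangular hyperbola that circumscribes \emph{both} quadrilaterals and then to locate its center. The argument rests on two classical facts about circumconics: (Fact A) any rectangular hyperbola through the three vertices of a triangle also passes through the orthocenter of that triangle; and (Fact B) the center of a rectangular hyperbola circumscribed about a triangle lies on the nine-point circle of that triangle. Both are standard in the theory of circumconics.

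First I would construct the conic. The conics through the four points $A$, $B$, $C$, $D$ form a pencil $\lambda\mathcal{C}_1+\mu\mathcal{C}_2$, and the condition that a member be a rectangular hyperbola (vanishing trace of its quadratic part, equivalently perpendicular asymptotes) is a single linear equation in $\lambda:\mu$. Hence there is a rectangular hyperbola $\mathcal{H}$ through $A$, $B$, $C$, $D$, and for a convex $ABCD$ it is nondegenerate and unique. Now apply Fact A four times: $\mathcal{H}$ passes through the vertices $B,C,D$ of $\triangle BCD$, so it passes through its orthocenter $E$; likewise it passes through $F$, $G$, and $H$, since these are the orthocenters of $\triangle CDA$, $\triangle DAB$, $\triangle ABC$. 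Thus $\mathcal{H}$ contains all eight points $A,B,C,D,E,F,G,H$, so it is a common circumconic of $ABCD$ and $EFGH$, and it is a rectangular hyperbola. This settles the first assertion.

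Next I would identify the center $O$ of $\mathcal{H}$. By Fact B, since $\mathcal{H}$ is a rectangular hyperbola through $B,C,D$, its center $O$ lies on the nine-point circle of $\triangle BCD$; applying this to $\triangle CDA$, $\triangle DAB$, $\triangle ABC$ shows that $O$ lies on all four nine-point circles of the half triangles of $ABCD$. By definition (Table~\ref{table:quadcenters}) these four circles meet in the single point QA-P2, so $O=\ponce[ABCD]$. The identical argument applied to $\mathcal{H}$ viewed as a rectangular hyperbola through $E,F,G,H$ — whose half triangles are $\triangle FGH$, $\triangle GHE$, $\triangle HEF$, $\triangle EFG$ — shows that $O$ lies on all four of \emph{their} nine-point circles as well, whence $O=\ponce[EFGH]$. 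Combining, $O=\ponce[ABCD]=\ponce[EFGH]$, completing the proof.

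The main obstacle is the careful deployment of the two classical lemmas, together with two points of rigor. First, one must confirm that $\mathcal{H}$ is a genuine (nondegenerate) rectangular hyperbola for the convex quadrilaterals under consideration, so that ``circumconic'' is meaningful; the degenerate members of the pencil are the three line-pairs joining the four points, and the rectangular member is generically distinct from these. Second, one must justify that $O$ lying on all four nine-point circles forces $O$ to be their common point: this is immediate once we know (from the Poncelet-point theorem underlying the definition of QA-P2) that the four circles have exactly one common point and are not all coincident. If a self-contained argument is preferred over citing Facts A and B, the same statement can instead be verified analytically in barycentric coordinates using the general circumconic $uyz+vzx+wxy=0$, the rectangular-hyperbola condition, and the known coordinates of the orthocenters and of QA-P2.
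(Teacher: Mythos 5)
The paper does not actually prove this theorem: it is introduced with ``The following result comes from \cite{QA-P2}'' and is simply imported from the Encyclopedia of Quadri-Figures entry for the Euler--Poncelet point. Your argument therefore supplies a proof where the paper gives only a citation, and it is the standard synthetic one: the rectangular member of the pencil of conics through $A$, $B$, $C$, $D$ passes through the orthocenter of every inscribed triangle (Fact A), hence through $E$, $F$, $G$, $H$; and the center of a rectangular circumhyperbola of a triangle lies on its nine-point circle (Fact B), so the center lies on all four nine-point circles of the half triangles of $ABCD$ and, by the same token, on those of $EFGH$, forcing it to be QA-P2 of each. Both facts are classical, and you correctly isolate the two points of rigor (uniqueness of the common point of the four nine-point circles, and nondegeneracy). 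The one claim that is not quite right is that the rectangular member of the pencil is nondegenerate for every convex $ABCD$. The rectangular condition is linear on the pencil, so there is exactly one rectangular member (no convex quadrilateral is an orthocentric system); if $ABCD$ is orthodiagonal, the line pair $AC \cup BD$ already has perpendicular ``asymptotes'' and is therefore that unique member, and similarly if a pair of opposite sides is perpendicular. In those cases there is no nondegenerate rectangular circumhyperbola, and the statement survives only in a degenerate sense (consistently with the paper's Theorem~\ref{thm:odX4}, which shows that for orthodiagonal quadrilaterals $E,G$ lie on $AC$ and $F,H$ on $BD$, with the diagonal point equal to $\ponce[ABCD]$). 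You should either exclude these cases or note that the conic may degenerate to a perpendicular line pair; otherwise the proof is sound.
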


\begin{figure}[h!t]
\centering
\includegraphics[width=0.4\linewidth]{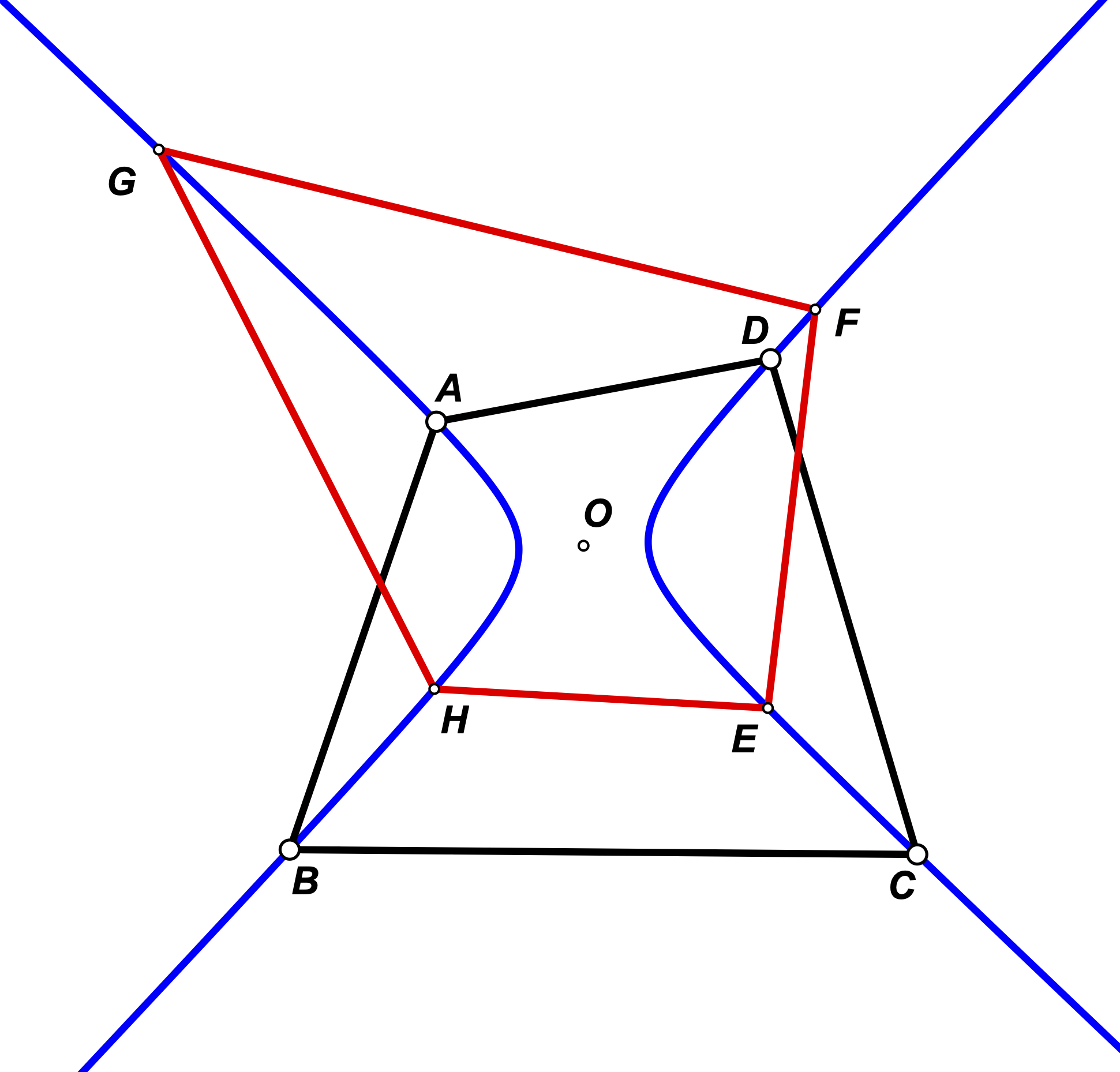}
\caption{general quadrilateral with $X_4$-points $\implies \hyperb(ABCD,EFGH)$}
\label{fig:gqX4conic}
\end{figure}

\begin{corollary}
Let $ABCD$ be an arbitrary quadrilateral.
Let $E$, $F$, $G$, and $H$ be the $X_{4}$-points of $\triangle BCD$, $\triangle CDA$, 
$\triangle DAB$, and $\triangle ABC$, respectively.
Then $$\ponce[ABCD]=\ponce[EFGH].$$
\end{corollary}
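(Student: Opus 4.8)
The plan is to derive this immediately from Theorem~\ref{thm:gqX4conic}. That theorem already asserts that $ABCD$ and $EFGH$ share a common circumconic, that this conic is a rectangular hyperbola, and that its center $O$ is the Euler-Poncelet point (QA-P2) of \emph{both} quadrilaterals. Since $\ponce$ is by definition the Euler-Poncelet point QA-P2 (see Table~\ref{table:quadcenters}), the chain $\ponce[ABCD]=O=\ponce[EFGH]$ closes the argument with essentially no further work. In other words, the corollary is nothing more than reading off the center-identification that Theorem~\ref{thm:gqX4conic} already packages into its conclusion.

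The only point worth unpacking is \emph{why} the center of the common hyperbola deserves to be called the Poncelet point of each quadrilateral. Here I would invoke the standard fact that the center of any rectangular hyperbola circumscribing a triangle lies on that triangle's nine-point circle. Each half triangle of $ABCD$ has its three vertices among $A,B,C,D$, so it is inscribed in the common hyperbola; applying the fact to all four half triangles shows that $O$ lies simultaneously on the four nine-point circles, and that common point is exactly QA-P2 of $ABCD$. The identical argument applied to the four triangles formed by omitting one vertex of $EFGH$ at a time identifies $O$ with QA-P2 of $EFGH$ as well. This reproduces the center claim of Theorem~\ref{thm:gqX4conic} from first principles, should one wish to make the deduction self-contained.

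Consequently there is no genuine obstacle at the level of the corollary itself: the hard content — establishing the existence of the common circumconic and verifying that it is a rectangular hyperbola — has already been discharged by Theorem~\ref{thm:gqX4conic}, and everything here reduces to the definition of $\ponce$ together with the nine-point-circle characterization of the rectangular-hyperbola center. If a fully independent proof were desired, the remaining work would be to reprove that characterization and the common-conic existence directly, which is precisely the part best handled by the barycentric computation underlying Theorem~\ref{thm:gqX4conic}.
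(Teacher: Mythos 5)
Your proposal is correct and matches the paper's treatment: the corollary is stated there with no separate proof precisely because, as you observe, it is just the center-identification clause of Theorem~\ref{thm:gqX4conic} combined with the definition of $\ponce$ as QA-P2. Your optional aside deriving the identification from the nine-point-circle characterization of rectangular-hyperbola centers is a valid (and standard) justification, consistent with the QA-P2 description in Table~\ref{table:quadcenters}.
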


\subsection{Properties involving $X_5$}\ \\

\begin{theorem}
\label{thm:gqX5}
Let $ABCD$ be an arbitrary quadrilateral.
Let $E$, $F$, $G$, and $H$ be the $X_{5}$-points of $\triangle BCD$, $\triangle CDA$, 
$\triangle DAB$, and $\triangle ABC$, respectively.
Then  $$\m[EFGH]=\ponce[ABCD].$$
\end{theorem}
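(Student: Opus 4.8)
The plan is to reuse the barycentric pipeline from the proof of Theorem~\ref{thm:odX5}, which already produces the $X_5$-points of the four half triangles. First I would set $A=(1:0:0)$, $B=(0:1:0)$, $C=(0:0:1)$, $D=(p:q:r)$ and obtain $E$, $F$, $G$, $H$ as the nine-point centers of $\triangle BCD$, $\triangle CDA$, $\triangle DAB$, $\triangle ABC$ through the call \texttt{CentralQuadrilateral[5]}; the coordinates of $E$ are precisely the triple $\{x,y,z\}$ displayed in that example, with the analogous triples for $F$, $G$, $H$. Next I would compute $\m[EFGH]$, the vertex centroid of $EFGH$, using the routine \texttt{CentroidQuad} (which returns the common point of the two bimedians, i.e. the vertex centroid). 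On the other side I would load the barycentric (CT) coordinates of the Euler--Poncelet point QA--P2 of $ABCD$ from \cite{EQF}, cleared of denominators. The proof then reduces to forming the cross product of $\m[EFGH]$ and $\ponce[ABCD]$ and checking that \texttt{Simplify} returns $\{0,0,0\}$. Unlike Theorem~\ref{thm:odX5}, the quadrilateral here is unconstrained, so there is no side condition to simplify against: the vanishing must hold as a polynomial identity in $p$, $q$, $r$ (and $a$, $b$, $c$).

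Before launching the full symbolic computation I would extract a cleaner vector picture and a cheap sanity check. Writing $O_i$, $H_i$, $N_i$ for the circumcenter, orthocenter, and nine-point center of the $i$-th half triangle, the identity $H_i=(\text{vertex sum})-2O_i$ gives $N_i=\tfrac12(O_i+H_i)=\tfrac12\big((\text{vertex sum})-O_i\big)$. Summing over the four half triangles, in each of which every one of $A,B,C,D$ occurs three times, yields the reduction $\m[EFGH]=\tfrac14\sum_i N_i=\tfrac38(A+B+C+D)-\tfrac18\sum_i O_i$. Hence the statement is equivalent to the vector identity $\sum_i O_i=3(A+B+C+D)-8\,\ponce[ABCD]$, i.e. to a closed form for the sum of the four half-triangle circumcenters in terms of the Poncelet point. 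I would first evaluate both sides numerically on a convenient symmetric family of quadrilaterals to confirm the pairing and the constant before committing to the symbolic grind.

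The main obstacle is the Euler--Poncelet point itself: its QA--P2 coordinates are far bulkier than the centroid and diagonal-point data used in Theorem~\ref{thm:odX5}, so the cross-product components will be large and the concluding \texttt{Simplify} is where essentially all the work lives, possibly requiring \texttt{Factor} or \texttt{Together} component by component rather than a single call. The synthetic alternative instead hinges on establishing the closed form for $\sum_i O_i$ (equivalently, the vertex centroid of the quadrangle of half-triangle circumcenters) and matching it against the common-nine-point-circle characterization of $\ponce[ABCD]$. I would therefore carry the numerical sanity check all the way to explicit values first; only once both sides are seen to coincide there would I run the general symbolic verification, and any discrepancy surfacing in that check is exactly the signal to re-examine the orientation of the identity against the summary table for general quadrilaterals.
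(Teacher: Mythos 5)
Your computational plan coincides with the paper's own proof: Theorem~\ref{thm:gqX5} is proved there exactly by the barycentric pipeline you describe --- vertices $(1:0:0)$, $(0:1:0)$, $(0:0:1)$, $(p:q:r)$, the $X_5$-points via \texttt{CentralQuadrilateral[5]}, the QA--P2 coordinates taken from the Encyclopedia of Quadri-Figures with denominators cleared, and a final cross-product/\texttt{Simplify} check --- with the computation relegated to the supplementary Mathematica notebooks, and with no side condition imposed, just as you say.

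However, the hedge in your last sentence is precisely where the trouble lies: executed verbatim on the printed equation, your final cross product will \emph{not} vanish, because the theorem as typeset has the two quadrilaterals interchanged. Your own vector reduction already proves this. Take $ABCD$ cyclic with circumcenter at the origin and put $s=A+B+C+D$: then every half-triangle circumcenter satisfies $O_i=0$, so your formula gives $\m[EFGH]=\tfrac38 s$, whereas $\ponce[ABCD]$ is the anticenter $\tfrac12 s$; equality forces $s=0$, i.e.\ vertex centroid $=$ circumcenter, which fails for a generic cyclic quadrilateral. Equivalently, your proposed closed form $\sum_i O_i = 3s-8\,\ponce[ABCD]$ reads $0=-s$ in this case. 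The relationship the computer study actually found --- the one recorded in the summary table for general quadrilaterals at center $5$, and consistent in orientation with Theorem~\ref{thm:gqX3} --- is $\m[ABCD]=\ponce[EFGH]$: the vertex centroid of the \emph{reference} quadrilateral equals the Euler--Poncelet point of the \emph{central} quadrilateral. In the cyclic test case this checks out: $E=(s-A)/2$ and cyclically, so $EFGH$ is the image of $ABCD$ under the homothety $P\mapsto(s-P)/2$, under which the Euler--Poncelet point is equivariant, giving $\ponce[EFGH]=(s-s/2)/2=s/4=\m[ABCD]$. So do run the numerical sanity check first, as you planned --- it flags the swap immediately --- and then point your symbolic pipeline at $\m[ABCD]$ versus the QA--P2 point computed from the quadrangle $\{E,F,G,H\}$. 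Note also that your linear reduction buys nothing for the corrected statement, since $\ponce[EFGH]$ is not a linear function of $E$, $F$, $G$, $H$; there the brute-force barycentric verification, as in the paper, appears unavoidable.
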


Our proof of Theorem~\ref{thm:gqX5} is analytical using barycentric coordinates.

\begin{open}
Is there a purely geometrical proof of Theorem~\ref{thm:gqX5}?
\end{open}

\newpage

\section{Tangential Quadrilaterals}

A \emph{tangential quadrilateral} in one in which a circle can be inscribed, touching all
four sides.
The center of this circle is called the \emph{incenter} of the quadrilateral.
The circle is called the \emph{incircle}.

Our computer study found only one relationship between a tangential quadrilateral
and its central quadrilateral (using any of the first 1000 centers) that was not true for quadrilaterals in general.
It is listed in the following table.
\medskip

\begin{center}
\begin{tabular}{|l|p{2.2in}|}
\hline
\multicolumn{2}{|c|}{\textbf{\color{blue}\large \strut Central Quadrilaterals of Tangential Quadrilaterals}}\\ \hline
\textbf{Relationship}&\textbf{centers}\\ \hline
\ru $\incent[ABCD]=\persp[ABCD,GHEF]$&1\\ \hline
\end{tabular}
\end{center}

\bigskip

\begin{theorem}
\label{thm:tqX1}
Let $ABCD$ be a tangential quadrilateral with incenter $I$.
Let $E$, $F$, $G$, and $H$ be the $X_{1}$-points of $\triangle BCD$, $\triangle CDA$, 
$\triangle DAB$, and $\triangle ABC$, respectively.
Then quadrilaterals $ABCD$ and $GHEF$ are perspective with perspector $I$ (Figure~\ref{fig:tqX1}).
\end{theorem}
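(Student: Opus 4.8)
The plan is to give a purely geometric proof built on angle bisectors rather than a barycentric computation. The crucial fact is that because $ABCD$ is tangential, its incircle is tangent to all four sides, so the incenter $I$ is equidistant from the two sides meeting at each vertex. Consequently $I$ lies on the internal bisector of every interior angle of the quadrilateral, namely the bisectors of $\angle DAB$, $\angle ABC$, $\angle BCD$, and $\angle CDA$. I would state this standard property at the outset and then reduce the entire theorem to it.

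First I would read off the vertex correspondence forced by the labelling $GHEF$: $A\leftrightarrow G$, $B\leftrightarrow H$, $C\leftrightarrow E$, $D\leftrightarrow F$. Here $G$ is the incenter of $\triangle DAB$, and the decisive observation is that the two sides of $\triangle DAB$ meeting at $A$ are exactly the quadrilateral sides $AB$ and $AD$; hence the angle of $\triangle DAB$ at $A$ coincides with the interior angle $\angle DAB$ of $ABCD$. By contrast, the other two half triangles containing $A$ (namely $\triangle ABC$ and $\triangle ACD$) each use a diagonal as one of their sides at $A$, so their angle at $A$ is not the full quadrilateral angle. This is precisely why the cyclic ordering $GHEF$, rather than $EFGH$, is the one for which the perspectivity holds.

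Next I would combine the two facts vertex by vertex. Since $G$ is the incenter of $\triangle DAB$, it lies on the bisector of that triangle's angle at $A$, which is the bisector of $\angle DAB$; but $I$ also lies on the bisector of $\angle DAB$. Thus both $G$ and $I$ lie on the single ray bisecting $\angle DAB$ from $A$, so $A$, $G$, $I$ are collinear and the line $AG$ passes through $I$. Repeating the identical argument at the remaining three vertices — using that the angle of $\triangle ABC$ at $B$, of $\triangle BCD$ at $C$, and of $\triangle CDA$ at $D$ each equals the corresponding interior angle of $ABCD$ — shows that $BH$, $CE$, and $DF$ likewise pass through $I$. Because the four joining lines $AG$, $BH$, $CE$, $DF$ all concur at $I$, the quadrilaterals $ABCD$ and $GHEF$ are perspective from $I$, as claimed.

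I do not expect a serious obstacle: once the angle-bisector alignment is noticed the argument is essentially immediate, and no calculation is required. The only points demanding care are bookkeeping ones — confirming the standard fact that a tangential quadrilateral's incenter lies on all four interior angle bisectors, and verifying that the labelling $GHEF$ pairs each vertex with the unique half triangle whose angle there reproduces the quadrilateral's angle. Since both steps are synthetic, an analytic barycentric verification of the kind illustrated in Theorem~\ref{thm:odX5} is unnecessary here.
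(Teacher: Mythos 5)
Your proposal is correct and follows essentially the same route as the paper's own proof: both arguments observe that $G$, $H$, $E$, $F$ lie on the internal bisectors of $\angle DAB$, $\angle ABC$, $\angle BCD$, $\angle CDA$ respectively, and that the incenter $I$ of the tangential quadrilateral lies on all four bisectors, so $AG$, $BH$, $CE$, $DF$ concur at $I$. Your additional remarks explaining why the ordering $GHEF$ (rather than $EFGH$) is the right one are a helpful elaboration but do not change the argument.
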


\begin{figure}[h!t]
\centering
\includegraphics[width=0.4\linewidth]{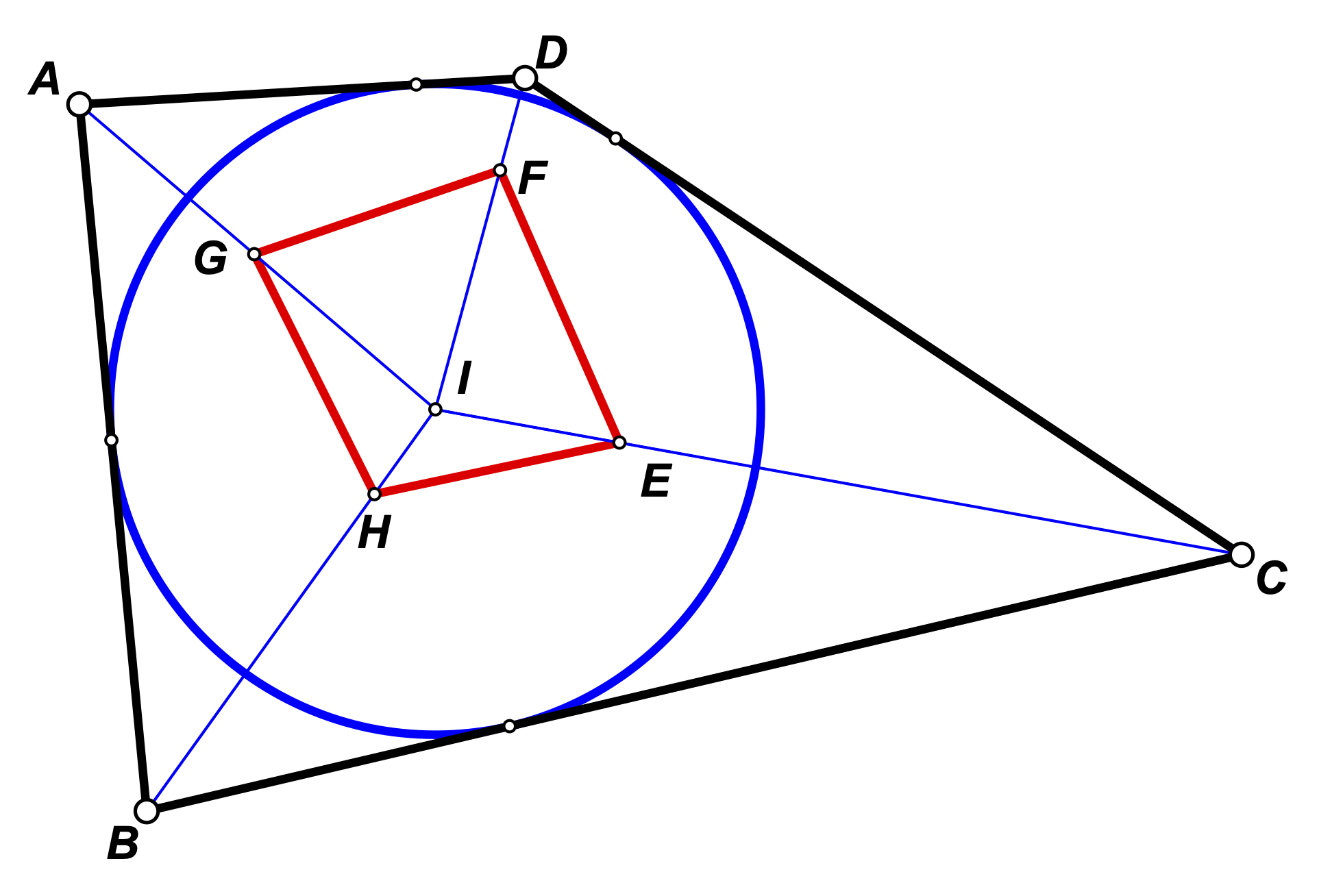}
\caption{tangential quadrilateral with $X_1$-points $\implies \persp[ABCD,GHEF]$}
\label{fig:tqX1}
\end{figure}

\begin{proof}
The point $G$ is the incenter of $\triangle ABD$, hence $G$ lies on the angle bisector of $\angle BAD$. Thus, $G\in AI$.
Similarly, $H\in BI$, $E\in CI$, and $F\in DI$. Therefore, $AG$, $BH$, $CE$, and $DF$ concur in $I$.
Hence, quadrilaterals $ABCD$ and $GHEF$ are perspective and the perspector is $I$.
\end{proof}

\begin{open}
For the quadrilaterals in Theorem~\ref{thm:tqX1}, how is the perspector related to quadrilateral $EFGH$?
\end{open}

\section{Extangential Quadrilaterals}

An \emph{extangential quadrilateral} with consecutive sides of lengths $a$, $b$, $c$, and $d$ is one in which $a+b=c+d$.

Our computer study did not find any relationships between an extangential quadrilateral
and its central quadrilateral (using any of the first 1000 centers) that was not true for quadrilaterals in general.
\medskip

\begin{center}
\begin{tabular}{|l|p{2.2in}|}
\hline
\multicolumn{2}{|c|}{\textbf{\color{blue}\large \strut Central Quadrilaterals of exTangential Quadrilaterals}}\\ \hline
\multicolumn{2}{|c|}{No new relationships were found.}\\ \hline
\end{tabular}
\end{center}

\section{EqualProdOp Quadrilaterals}

An \emph{equalProdOp quadrilateral} with consecutive sides of lengths $a$, $b$, $c$, and $d$ is one in which $ac=bd$.

Our computer study did not find any relationships between an equalProdOp quadrilateral
and its central quadrilateral (using any of the first 1000 centers) that were not true for quadrilaterals in general.
\medskip

\begin{center}
\begin{tabular}{|l|l|p{2.2in}|}
\hline
\multicolumn{3}{|c|}{\textbf{\color{blue}\large \strut Central Quadrilaterals of EqualProdOp Quadrilaterals}}\\ \hline
\multicolumn{3}{|c|}{No new relationships were found.}\\
\hline
\end{tabular}
\end{center}

\section{EqualProdAdj Quadrilaterals}

An \emph{equalProdAdj quadrilateral} with consecutive sides of lengths $a$, $b$, $c$, and $d$ is one in which $ab=cd$.

Our computer study did not find any relationships between an equalProdAdj quadrilateral
and its central quadrilateral (using any of the first 1000 centers) that were not true for quadrilaterals in general.
\medskip

\begin{center}
\begin{tabular}{|l|l|p{2.2in}|}
\hline
\multicolumn{3}{|c|}{\textbf{\color{blue}\large \strut Central Quadrilaterals of EqualProdAdj Quadrilaterals}}\\ \hline
\multicolumn{3}{|c|}{No new relationships were found.}\\
\hline
\end{tabular}
\end{center}

\section{Pythagorean Quadrilaterals}

A \emph{Pythagorean quadrilateral} with consecutive sides of lengths $a$, $b$, $c$, and $d$ is one in which $a^2+b^2=c^2+d^2$.

Our computer study did not find any relationships between a Pythagorean  quadrilateral
and its central quadrilateral (using any of the first 1000 centers) that were not true for quadrilaterals in general.
\medskip

\begin{center}
\begin{tabular}{|l|l|p{2.2in}|}
\hline
\multicolumn{3}{|c|}{\textbf{\color{blue}\large \strut Central Quadrilaterals of Pythagorean Quadrilaterals}}\\ \hline
\multicolumn{3}{|c|}{No new relationships were found.}\\
\hline
\end{tabular}
\end{center}

\section{Equidiagonal Quadrilaterals}

An \emph{equidiagonal quadrilateral} is a quadrilateral with two equal diagonals.

Our computer study did not find any relationships between an equidiagonal quadrilateral
and its central quadrilateral (using any of the first 1000 centers) that were not true for quadrilaterals in general.
\medskip

\begin{center}
\begin{tabular}{|l|l|p{2.2in}|}
\hline
\multicolumn{3}{|c|}{\textbf{\color{blue}\large \strut Central Quadrilaterals of Equidiagonal Quadrilaterals}}\\ \hline
\multicolumn{3}{|c|}{No new relationships were found.}\\
\hline
\end{tabular}
\end{center}

\newpage

\section{Orthodiagonal Quadrilaterals}

An \emph{orthodiagonal quadrilateral} is a quadrilateral in which the two diagonals are perpendicular.

Our computer study found a few relationships between an orthodiagonal quadrilateral
and its central quadrilateral (using any of the first 1000 centers) that are not true for quadrilaterals in general.
These are shown in the following table.
\medskip

\begin{center}
\begin{tabular}{|l|p{2.2in}|}
\hline
\multicolumn{2}{|c|}{\textbf{\color{blue}\large \strut Central Quadrilaterals of Orthodiagonal Quadrilaterals}}\\ \hline
\textbf{Relationship}&\textbf{centers}\\ \hline
\ru $\stein[ABCD]=\diag(EFGH)$&3\\ \hline
\ru QA-P4=$\persp[ABCD,GHEF]$=QA-P4&3\\ \hline
\ru $\diag(ABCD)=\diag(EFGH)=\ponce[ABCD]$&4\\ \hline
\ru $\m[ABCD]=\diag(EFGH)$&5\\ \hline
\ru $\persp[ABCD,GHEF]$&25, 68, 485, 486\\ \hline
\end{tabular}
\end{center}

\bigskip

\begin{theorem}
\label{thm:odX4}
Let $ABCD$ be an orthodiagonal quadrilateral.
Let $E$, $F$, $G$, and $H$ be the $X_{4}$ -points of $\triangle BCD$, $\triangle CDA$, 
$\triangle DAB$, and $\triangle ABC$, respectively.
Then quadrilaterals $ABCD$ and $EFGH$ have the same diagonal point
and $\diag(ABCD)=\ponce[ABCD]$ (Figure~\ref{fig:odX4}).
\end{theorem}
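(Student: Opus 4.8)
The plan is to give a purely synthetic proof resting on a single observation: because $ABCD$ is orthodiagonal, in each half triangle the altitude dropped from the vertex that is an endpoint of a diagonal coincides with the diagonal line through that vertex. Let $P$ denote the diagonal point $AC\cap BD$ of $ABCD$, and recall $AC\perp BD$.

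First I would locate the four orthocenters relative to the diagonals. Consider $\triangle BCD$, whose vertex $C$ is an endpoint of diagonal $AC$. Its altitude from $C$ is perpendicular to the opposite side $BD$; but $CA\perp BD$ and $C$ lies on line $CA$, so this altitude is exactly line $AC$. Hence the orthocenter $E$ lies on $AC$. The same argument applied to $\triangle DAB$ (using vertex $A$) gives $G\in AC$, while applying it to $\triangle CDA$ (vertex $D$) and $\triangle ABC$ (vertex $B$) gives $F,H\in BD$. Therefore line $EG$ is line $AC$ and line $FH$ is line $BD$, so the diagonal point of $EFGH$ is $EG\cap FH=AC\cap BD=P$, the diagonal point of $ABCD$. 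This proves $\diag(ABCD)=\diag(EFGH)$.

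For the second assertion I would use the feet of those same altitudes. In $\triangle BCD$ the altitude from $C$ is line $AC$, so its foot on $BD$ is $AC\cap BD=P$; since the foot of an altitude lies on the nine-point circle, $P$ lies on the nine-point circle of $\triangle BCD$. Repeating this for the other three half triangles (the foot is $P$ each time, being the projection onto one diagonal of a vertex lying on the other) shows that $P$ lies on all four nine-point circles. By definition the Euler-Poncelet point QA-P2 is the common point of these four nine-point circles, so $P=\ponce[ABCD]$, whence $\diag(ABCD)=\diag(EFGH)=\ponce[ABCD]$.

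There is no serious computational obstacle here; the work is entirely in recognizing the altitude-diagonal coincidence forced by orthodiagonality. The only points needing care are nondegeneracy checks — that $P$ is interior and distinct from the vertices (guaranteed by convexity) so that each foot is a genuine point of the relevant nine-point circle, and the standard fact that the four nine-point circles of a quadrangle concur, so that exhibiting $P$ on all of them identifies it with QA-P2. Should a synthetic argument be deemed insufficient, the statement can alternatively be verified by the barycentric method illustrated for Theorem~\ref{thm:odX5}: place $A,B,C$ at the reference vertices and $D=(p:q:r)$, impose the orthodiagonal condition $a^2(p+r)+b^2(r-p)-c^2(p+r)=0$, and check that the diagonal point of $EFGH$, the diagonal point of $ABCD$, and the QA-P2 coordinates all coincide.
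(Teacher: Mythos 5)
Your argument is correct and is essentially the paper's own proof: both show that orthodiagonality forces $E,G$ onto line $AC$ and $F,H$ onto line $BD$ (since those diagonals are altitudes of the relevant half triangles), so $\diag(EFGH)=P$, and both then identify $P$ as the common altitude foot lying on all four nine-point circles, hence the Euler--Poncelet point. No substantive difference in approach.
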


\begin{figure}[h!t]
\centering
\includegraphics[width=0.4\linewidth]{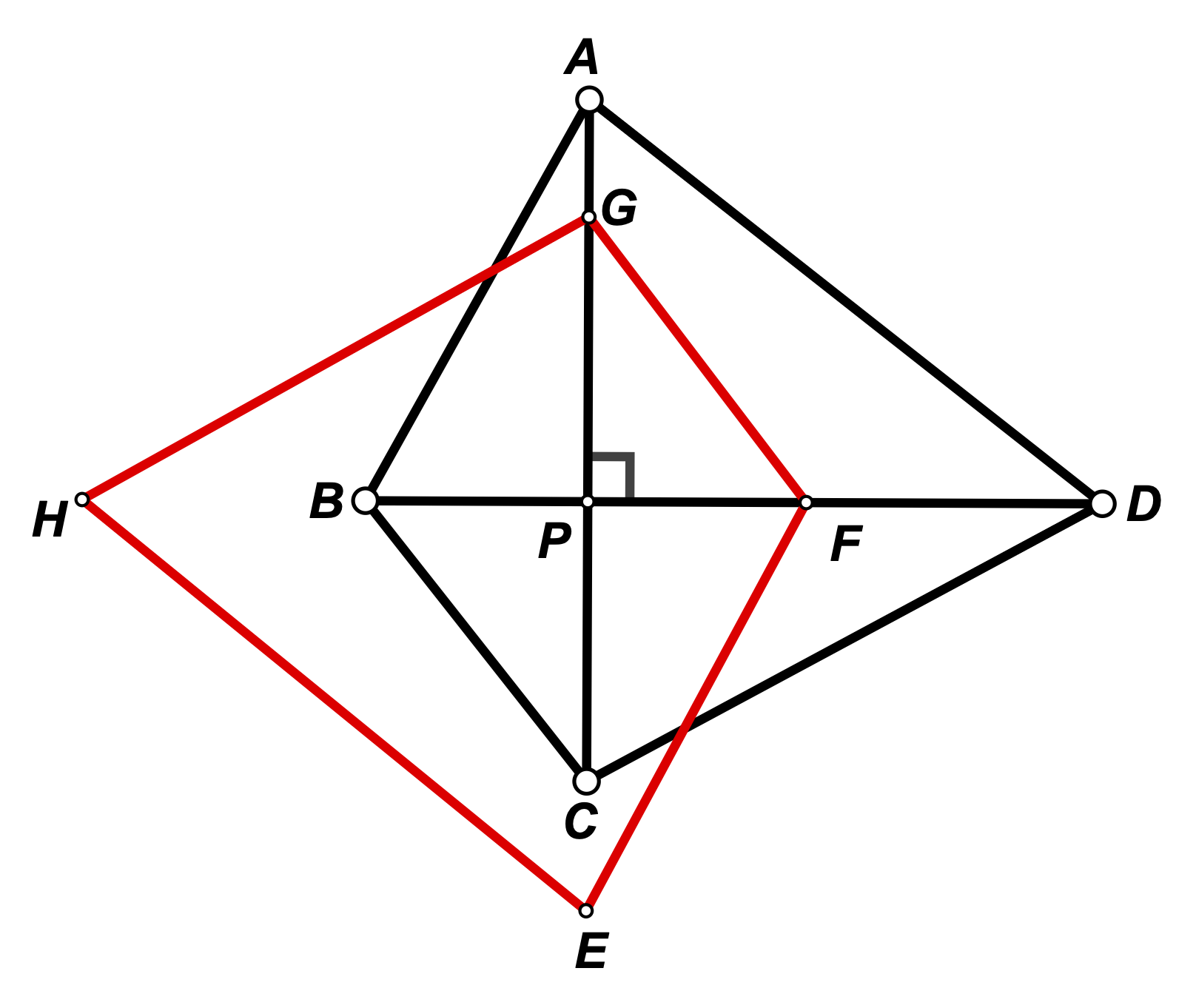}
\caption{\small orthodiagonal quad with $X_4$-points $\implies \diag(ABCD)=\diag(EFGH)$}
\label{fig:odX4}
\end{figure}

\begin{proof}
Let $P$ be the diagonal point of quadrilateral $ABCD$.
Since $ABCD$ is orthodiagonal, $AP$ is an altitude of $\triangle ABD$.
Since $E$ is the orthocenter of $\triangle BCD$, $E$ lies on this altitude.
Similarly, $G$ also lies on this altitude.
In the same way, points $F$ and $H$ lie on $BD$.
Thus, the diagonals $EG$ and $FH$ of central quadrilateral $EFGH$ meet at $P$ and so $EFGH$ has diagonal point $P$.
Hence, $ \diag(ABCD)=\diag(EFGH)$.

The point $P$ is the foot of the $C$-altitude of $\triangle BCD$, so $C$ lies on the ninepoint circle of $\triangle BCD$.
Similarly $P$ lies on the ninepoint circles of the other half triangles. Therefore, $P=\ponce[ABCD]$ and $\diag(ABCD)=\ponce[ABCD]$.
\end{proof}

\begin{open}
Are there purely geometrical proofs for the results found in this section involving centers $X_3$, $X_4$, and $X_5$?
\end{open}

\newpage

\section{Cyclic Quadrilaterals}
\label{section:cyclic}

\newcommand{\cyc}[1]{\textcolor{blue}{#1}}

An \emph{cyclic quadrilateral} is a quadrilateral that can be inscribed in a circle.

Our computer study found many relationships between a cyclic quadrilateral
and its central quadrilateral. They are summarized in the following tables.
Properties that are true for quadrilaterals in general are excluded.

Centers that are colored \cyc{blue} in the following table represent centers for which the central quadrilateral is cyclic.

\bigskip
\bigskip

\begin{center}
\begin{tabular}{|p{3in}|p{1in}|}
\hline
\multicolumn{2}{|c|}{\textbf{\color{blue}\large \strut Central Quadrilaterals of Cyclic Quadrilaterals}}\\ \hline
\textbf{Relationship}&\textbf{centers}\\ \hline

\ru $[ABCD]=4[EFGH]$& \cyc{5, 550}\\ \hline
\ru $[ABCD]=9[EFGH]$& \cyc{376}\\ \hline
\ru $[ABCD]=16[EFGH]$& \cyc{140, 548}\\ \hline
\ru $[ABCD]=25[EFGH]$& \cyc{631}\\ \hline
\ru $[ABCD]=36[EFGH]$& \cyc{549}\\ \hline
\ru $[ABCD]/[EFGH]=1/4$& \cyc{382}\\ \hline
\ru $[ABCD]/[EFGH]=9/4$& \cyc{381}\\ \hline
\ru $[ABCD]/[EFGH]=16/9$& \cyc{546}\\ \hline
\ru $[ABCD]/[EFGH]=100/9$& \cyc{632}\\ \hline
\ru $[ABCD]/[EFGH]=144/25$& \cyc{547}\\ \hline
\ru $\conic(ABCD,EFGH)$& 6, 54, 64\\ \hline
\ru $ABCD\cong EFGH$& \cyc{4}, \cyc{20}\\ \hline
\ru $\anti[ABCD]=\anti[EFGH]$& \cyc{4}\\ \hline
\ru $\anti[ABCD]=\centro[EFGH]$& \cyc{546}\\ \hline
\ru $\anti[ABCD]=\m[EFGH]$& \cyc{381}\\ \hline
\ru $\anti[ABCD]=\oo[EFGH]$& \cyc{5}\\ \hline
\ru $\anti[EFGH]=\centro[ABCD]$& \cyc{381}\\ \hline
\ru $\anti[EFGH]=\h[ABCD]$& \cyc{382}\\ \hline
\ru $\anti[EFGH]=\m[ABCD]$& \cyc{5}\\ \hline
\ru $\centro[ABCD]=\centro[EFGH]$& \cyc{5}\\ \hline
\ru $\centro[ABCD]=\oo[EFGH]$& \cyc{2}\\ \hline
\ru $\h[ABCD]=\oo[EFGH]$& \cyc{4}\\ \hline
\ru $\m[ABCD]=\oo[EFGH]$& \cyc{140}\\ \hline
\end{tabular}
\end{center}

\newpage

\begin{center}
\begin{tabular}{|p{3.1in}|p{2.6in}|}
\hline
\multicolumn{2}{|c|}{\textbf{\color{blue}\large \strut Central Quadrilaterals of Cyclic Quadrilaterals (cont.)}}\\ \hline
\textbf{Relationship}&\textbf{centers}\\ \hline

\ru \raggedright$\oo[ABCD]=\h[EFGH]$& \cyc{$\mathbb{S}$}\\ \hline
\ru $\diag(ABCD)=\diag(EFGH)$& 6\\ \hline
\ru $\diag(ABDC)=\diag(EFHG)$& 6, \cyc{15, 16}, 61, 62, 371, 372\\ \hline
\ru $\diag(ACBD)=\diag(EGFH)$& 6, \cyc{15, 16}, 61, 62, 371, 372\\ \hline
\ru $(ABCD)\equiv(EFGH)$&$\mathbb{C}$\\ \hline
\ru \raggedright$\oo[ABCD]=\oo[EFGH]$&399\\ \hline
\ru QA-P2=$\ho[ABCD,EFGH]$=QA-P2& \cyc{4}\\ \hline
\ru QA-P7=$\ho[ABCD,EFGH]$=QA-P7& \cyc{5}\\ \hline
\ru QA-P34=$\ho[ABCD,EFGH]$=QA-P34& \cyc{631}\\ \hline
\ru $\ho[ABCD,EFGH]$& \cyc{$\mathbb{S}$}\\ \hline
\end{tabular}
\end{center}

The symbol $\mathbb{C}$ denotes the set of all triangle centers that lie on the circumcircle of the reference triangle.

The list of triangle centers that lie on the circumcircle of the reference triangle
can be found in \cite{MathWorld-Circumcircle}.
The first few are $X_n$ for $n=$74, 98--112, 476, 477, 675, 681, 689, 691, 697, 699, 701, 703, 705, 707, 709, 711, 713, 715, 717, 719, 721, 723, 725, 727, 729, 731, 733, 735, 737, 739, 741, 743, 745, 747, 753, 755, 759, 761, 767, 769, 773, 777, 779, 781, 783, 785, 787, 789, 791, 793, 795, 797, 803, 805, 807, 809, 813, 815, 817, 819, 825, 827, 831, 833, 835, 839--843, 898, 901, 907, 915, 917, 919, 925, 927, 929--935, 953, and 972.

The triangle centers that do not lie on the circumcircle, for which the central quadrilateral of a cyclic quadrilateral is cyclic
are $X_n$ for
$n=$1, 2, 4, 5, 13--16, 20, 23, 36, 40, 80, 125, 140, 165, 186, 265, 376, 381, 382, 399, 546--550, 631, 632.
The only one where the circumcircle of the central quadrilateral is concentric with the circumcircle of the reference triangle is $X_{399}$.

\medskip
The symbol $\mathbb{S}$ denotes the set of all triangle centers that lie on the Euler line of the reference triangle and have constant Shinagawa coefficients.
Shinagawa coefficients are defined in \cite{ETC}.
The first few $n$ for which $X_n$ has constant Shinagawa coefficients are $n=$2, 3, 4, 5, 20, 140, 376, 381, 382, 546-550, 631, and 632.

\medskip
The following are known facts about cyclic quadrilaterals.

\begin{theorem}
The Gergonne-Steiner point (QA-P3) of a cyclic quadrilateral coincides with the circumcenter of that quadrilateral.
That is, $$\stein[ABCD]=\oo[ABCD].$$
\end{theorem}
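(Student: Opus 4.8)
The plan is to give a short synthetic proof directly from the definition of the Gergonne-Steiner point QA-P3 recorded in \cite{EQF}: it is the common point of the four \emph{midray circles}, where the midray circle attached to a vertex is the circle through the midpoints of the three segments joining that vertex to the remaining three vertices. Thus the midray circle at $A$ passes through the midpoints of $AB$, $AC$, and $AD$, and analogously for $B$, $C$, and $D$. Let $O$ be the circumcenter of the cyclic quadrilateral $ABCD$; the goal is to show every midray circle passes through $O$, forcing $\stein[ABCD]=O=\oo[ABCD]$.

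The single geometric ingredient I would use is this: for any chord $XY$ of the circumcircle, the midpoint $M$ of $XY$ satisfies $OM\perp XY$, because triangle $OXY$ is isosceles ($OX=OY$) and its median from $O$ is also its altitude. Hence $\angle OMX=90\degrees$, and by the converse of Thales' theorem $M$ lies on the circle having $OX$ as diameter. Taking $X=A$ and $Y=B,C,D$ in turn shows that the midpoints of $AB$, $AC$, and $AD$ all lie on the circle with diameter $OA$. These three midpoints are noncollinear (the homothety of ratio $2$ centered at $A$ carries them to $B$, $C$, $D$, which are not collinear in a convex quadrilateral), so they determine a unique circle, and that circle is exactly the midray circle at $A$, namely the circle on diameter $OA$. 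In particular it passes through $O$. Repeating the argument at $B$, $C$, and $D$ identifies their midray circles as the circles on diameters $OB$, $OC$, $OD$, each again through $O$. Therefore $O$ is a common point of all four midray circles, which is precisely QA-P3, giving $\stein[ABCD]=\oo[ABCD]$.

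Since the computation collapses to one isosceles-triangle observation, there is no analytic obstacle here; the only thing requiring care is faithfully transcribing the definition of the midray circle from \cite{EQF} --- the midpoints of the three \emph{vertex-to-vertex} segments emanating from one vertex, rather than, say, midpoints of the sides of a half triangle --- since an incorrect reading would break the right-angle argument. A minor secondary point is to confirm that the four midray circles meet in the single point $O$: the circles on diameters $OA$ and $OB$ already intersect only at $O$ and at the midpoint of $AB$, so no spurious common point survives across all four, and QA-P3 is unambiguously $O$.
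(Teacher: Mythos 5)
The paper does not actually prove this statement: it is listed under ``known facts about cyclic quadrilaterals'' and invoked only to justify excluding relationships of the form $\stein[ABCD]=\oo[ABCD]$ from the tables. Your synthetic argument is correct and supplies the missing proof. You read the definition of QA-P3 correctly (the midray circle at a vertex passes through the midpoints of the three vertex-to-vertex segments from that vertex, per \cite{EQF} and Table 3 of the paper), and the key step is sound: since $OM\perp AX$ for the midpoint $M$ of any chord $AX$, the three relevant midpoints lie on the circle with diameter $OA$, and your homothety remark correctly rules out their collinearity, so the midray circle at $A$ \emph{is} the circle on diameter $OA$ and hence passes through $O$. Your closing check that the circles on diameters $OA$ and $OB$ meet only in $O$ and the midpoint of $AB$ (and likewise for the other pairs) is enough to see that $O$ is the unique common point of all four circles, so it is unambiguously QA-P3. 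What your approach buys over the paper's bare citation is a fully elementary, coordinate-free identification of each midray circle of a cyclic quadrangle as a circle on a diameter from $O$, which also answers in passing the paper's recurring request for ``purely geometrical'' arguments; the only caution, which you already flag, is that the whole proof hinges on the correct reading of ``midray circle,'' since substituting any other triple of midpoints would destroy the right-angle argument.
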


Relationships of this form will be excluded from our tables.
\goodbreak

\begin{theorem}
The Euler-Poncelet point (QA-P2) of a cyclic quadrilateral coincides with the anticenter of that quadrilateral.
That is, $$\ponce[ABCD]=\anti[ABCD].$$
\end{theorem}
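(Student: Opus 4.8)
The plan is to prove $\ponce[ABCD]=\anti[ABCD]$ by a short position-vector computation. I would place the circumcenter $O$ of the cyclic quadrilateral at the origin, so that $A$, $B$, $C$, $D$ are position vectors of equal length $R$ (the common circumradius). Two standard facts drive the argument: the orthocenter of a triangle inscribed in a circle centered at the origin is the sum of the position vectors of its vertices, so the nine-point center of such a triangle is \emph{half} that sum and its nine-point circle has radius $R/2$; and the anticenter of a cyclic quadrilateral is the reflection of $O$ in the vertex centroid, i.e.\ the point $M=\tfrac12(A+B+C+D)$.

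First I would verify the formula $\anti[ABCD]=M$ straight from the definition in Table~\ref{table:centers} as the intersection of the maltitudes. The maltitude from the midpoint $\tfrac12(A+B)$ of side $AB$ is the line through it perpendicular to the opposite side $CD$; since $M-\tfrac12(A+B)=\tfrac12(C+D)$ and $(C+D)\cdot(D-C)=|D|^2-|C|^2=0$, the point $M$ lies on this maltitude, and by the symmetry of the expression in $A,B,C,D$ it lies on all four. Next I would record the four nine-point centers of the half triangles: $N_A=\tfrac12(B+C+D)$ for $\triangle BCD$, and cyclically $N_B=\tfrac12(A+C+D)$, $N_C=\tfrac12(A+B+D)$, $N_D=\tfrac12(A+B+C)$, each circle having radius $R/2$. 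The whole theorem then reduces to the one-line observation that $M-N_A=\tfrac12 A$, so $|M-N_A|=R/2$; likewise $M-N_B=\tfrac12 B$, $M-N_C=\tfrac12 C$, and $M-N_D=\tfrac12 D$ all have length $R/2$. Hence $M$ lies on every one of the four nine-point circles.

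Since the Euler-Poncelet point $\ponce[ABCD]$ is, by the definition in Table~\ref{table:quadcenters}, the common point of these four nine-point circles, exhibiting the single point $M$ on all four identifies it as that common point, giving $\ponce[ABCD]=M=\anti[ABCD]$. There is no real computational difficulty here; the only step needing care is this final identification, since four circles need not share a point and a shared point need not be unique. I would handle it by invoking that QA-P2 is well defined as the concurrence point of the four nine-point circles for a non-degenerate quadrilateral, so that any point lying on all four must coincide with it; for a proper cyclic quadrilateral the centers $N_A,\dots,N_D$ are distinct, so the circles are genuinely distinct and their concurrence point is precisely the single point $M$ we have produced.
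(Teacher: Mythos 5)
Your proof is correct. Note, however, that the paper itself offers no proof of this statement at all: it is listed among ``known facts about cyclic quadrilaterals'' and is used only to justify excluding relationships of the form $\ponce[ABCD]=\anti[ABCD]$ from the tables. So there is no argument in the paper to compare yours against, and your vector computation fills a genuine gap. The argument is clean: with the circumcenter at the origin, the nine-point center of each half triangle is half the sum of its three vertices (since each half triangle has circumcenter $O$ and orthocenter equal to the sum of its vertices), the anticenter is $M=\tfrac12(A+B+C+D)$ by the maltitude check $(C+D)\cdot(D-C)=|D|^2-|C|^2=0$, and then $M-N_A=\tfrac12A$ has length $R/2$, putting $M$ on all four nine-point circles. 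Your handling of the final identification is the right instinct and is essentially adequate; if you want it airtight, observe that four circles of common radius $R/2$ passing through two common points would all have centers on the perpendicular bisector of those points at one of only two possible positions, which is impossible when $N_A,N_B,N_C,N_D$ are four distinct points, so the common point of the four circles is unique and must be $M$. One could alternatively derive the statement from Theorem~\ref{thm:gqX4conic} (the Euler--Poncelet point is the center of the rectangular circumhyperbola through the orthocenters), but your direct computation is more elementary and self-contained.
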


Relationships of this form will be excluded from our tables.

\begin{theorem}
The Quadrangle Nine-point Homothetic Center (QA-P7) of a cyclic quadrilateral coincides with the centrocenter of that quadrilateral.
That is, 
$$\hbox{QA-P7}\;[ABCD]=\centro[ABCD].$$
\end{theorem}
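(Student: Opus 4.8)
The plan is to work with position vectors and exploit the fact that, for a cyclic quadrilateral, all four half triangles are inscribed in one and the same circle. First I would place the origin at the circumcenter $O$ of $ABCD$, so that $A,B,C,D$ are position vectors of equal length $R$, and abbreviate $S=A+B+C+D$. The decisive observation is that every half triangle has circumcenter $O$; hence, by the Euler relation (the orthocenter equals the sum of the vertices when the circumcenter is taken as origin), each half triangle has very clean formulas for its centroid and nine-point center. For example $\triangle BCD$ has orthocenter $S-A$, centroid $\tfrac{1}{3}(S-A)$, and nine-point center $\tfrac{1}{2}(S-A)$, with the analogous formulas for the other three half triangles.

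Next I would identify the centrocenter. The four centroids $\tfrac{1}{3}(S-X)$, for $X\in\{A,B,C,D\}$, are the images of $A,B,C,D$ under the homothety $X\mapsto\tfrac{1}{3}(S-X)$, which has ratio $-\tfrac13$ and center $\tfrac14 S$ (the vertex centroid, QA-P1). Because $ABCD$ is cyclic, this homothety carries the circumcircle to a circle through all four centroids; hence the centroids are concyclic and the centrocenter is defined. Moreover the center of that image circle is the image of $O$, that is $\tfrac13(S-O)=\tfrac13 S$. Therefore $\centro[ABCD]=\tfrac13 S$.

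Then I would compute QA-P7 the same way. In the cyclic case the four nine-point centers $\tfrac12(S-X)$ are the images of $A,B,C,D$ under the homothety $X\mapsto\tfrac12(S-X)$ of ratio $-\tfrac12$; its center is the fixed point $P$ solving $P=\tfrac12(S-P)$, that is $P=\tfrac13 S$. Thus $ABCD$ and the quadrangle of nine-point centers of the half triangles are homothetic with center $\tfrac13 S$, and this center is exactly QA-P7. Comparing the two computations gives QA-P7$\,[ABCD]=\tfrac13 S=\centro[ABCD]$, as claimed.

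The main obstacle is not the arithmetic but matching the definitions. One must confirm that the ``quadrangle of 2nd generation nine-point centers'' appearing in the definition of QA-P7 reduces, for a cyclic quadrilateral, to the quadrangle of nine-point centers of the four half triangles, and that this quadrangle is genuinely homothetic to $ABCD$; this last point relies essentially on all half triangles sharing the circumcenter and fails for a general quadrilateral (there each half triangle has its own circumcenter $O_X$, and $\tfrac12(S-X-O_X)$ is no longer a homothety image of $X$). One must also check that the four centroids really are concyclic, so that $\centro$ is defined at all, which again uses cyclicity. A fully formal alternative, in the spirit of Theorem~\ref{thm:odX5}, is to run the barycentric computation under the concyclicity constraint $a^2qr+b^2pr+c^2pq=0$ and verify that the coordinates of the two centers coincide.
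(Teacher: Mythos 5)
The paper does not actually prove this theorem: it is listed among the ``known facts about cyclic quadrilaterals'' and imported without argument (implicitly from the Encyclopedia of Quadri-Figures), so your vector proof is a genuine addition rather than a rederivation of the paper's route. Your computations are correct as far as they go. With the origin at the circumcenter, every half triangle shares that circumcenter, so the centroid and nine-point center of $\triangle BCD$ are $\tfrac13(S-A)$ and $\tfrac12(S-A)$; the map $X\mapsto\tfrac13(S-X)$ carries the circumcircle to a circle centered at $\tfrac13 S$, which simultaneously shows the four centroids are concyclic (so $\centro$ is defined) and that $\centro[ABCD]=\tfrac13 S$; and the map $h:X\mapsto\tfrac12(S-X)$ is a homothety with fixed point $\tfrac13 S$.

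The one step you leave open is phrased in a way that would fail if you tried to verify it literally: the second-generation nine-point-center quadrangle does \emph{not} reduce to the first-generation one, even for a cyclic quadrilateral --- it is a genuinely different (smaller) quadrangle. What closes the gap is your own homothety $h$. Since the first-generation quadrangle is $h(ABCD)$, and a homothety commutes with the construction of any triangle center, the nine-point center of the component triangle $h(B)h(C)h(D)$ is $h$ applied to the nine-point center of $\triangle BCD$, namely $h^2(A)$; hence the second-generation quadrangle is $h^2(ABCD)$. The homothety $h^2$ (ratio $\tfrac14$) has the same center $\tfrac13 S$ as $h$, so QA-P7, defined as the homothetic center of $ABCD$ with the second-generation quadrangle, equals $\tfrac13 S=\centro[ABCD]$. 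With that one line added, your argument is a complete and clean synthetic proof of a statement the paper leaves unproved.
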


\begin{theorem}
\label{thm:cqX399}
Let $ABCD$ be a cyclic quadrilateral.
Let $E$, $F$, $G$, and $H$ be the $X_{399}$-points of $\triangle BCD$, $\triangle CDA$, 
$\triangle DAB$, and $\triangle ABC$, respectively.
Then quadrilaterals $ABCD$ and $EFGH$ have concentric circumcircles
and their radii are in the ratio $1:2$(Figure~\ref{fig:cqX399}).
\end{theorem}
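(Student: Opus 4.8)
The plan is to reduce the statement to a single metric fact about the Parry reflection point $X_{399}$ of a triangle, and then to exploit the special feature of cyclic quadrilaterals that all four half triangles share one circumcircle. First I would observe that since $A$, $B$, $C$, $D$ lie on a common circle with center $O$ and radius $R$, each of the four half triangles $\triangle BCD$, $\triangle CDA$, $\triangle DAB$, $\triangle ABC$ is inscribed in that same circle. Consequently $O$ is the circumcenter ($X_3$) and $R$ is the circumradius of every half triangle simultaneously. This is the structural simplification that makes a cyclic reference quadrilateral so much more tractable than a general one: instead of four triangles with four different circumcircles, we have four triangles sharing a single circumcircle centered at $O$.

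The whole theorem then rests on the following lemma: for any triangle inscribed in a circle of center $O$ and radius $R$, its Parry reflection point is at distance exactly $2R$ from $O$. The key is the relation between $X_{399}$ and the Kiepert-parabola focus $X_{110}$: the point $X_{110}$ lies on the circumcircle and is the midpoint of the segment joining the circumcenter $X_3$ to the Parry reflection point $X_{399}$. Taking $O$ as origin, this reads $X_{399}=2X_{110}-O$, so that $|OX_{399}|=2\,|OX_{110}|=2R$, since $X_{110}$ is a point of the circumcircle. Establishing (or citing) this midpoint relation is the main obstacle. If a clean reference is unavailable, I would instead verify $|OX_{399}|=2R$ directly by computing the barycentric coordinates of $X_{399}$ relative to a half triangle and evaluating the squared distance to $O$, in exactly the style of the proof of Theorem~\ref{thm:odX5}, imposing the concyclicity condition $a^2 qr+b^2 pr+c^2 pq=0$ on the reference quadrilateral. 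This analytic route is the guaranteed fallback and is presumably how the relationship was confirmed.

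Applying the lemma to each half triangle gives $|OE|=|OF|=|OG|=|OH|=2R$. Hence $E$, $F$, $G$, and $H$ all lie on the circle of radius $2R$ centered at $O$, so this circle is the circumcircle of $EFGH$. Since the circumcircle of $ABCD$ is also centered at $O$, the two circumcircles are concentric, and their radii $R$ and $2R$ are in the ratio $1:2$, as claimed. A closing remark would confirm non-degeneracy: the foci $X_{110}$ of the four distinct half triangles are generically distinct, so $E$, $F$, $G$, $H$ are four distinct points lying on the circle of radius $2R$, and $EFGH$ is a genuine cyclic quadrilateral whose circumcircle is the asserted one.
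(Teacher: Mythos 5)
Your proposal is correct, and it takes a genuinely different route from the paper: the paper offers no argument in the text at all, deferring to an analytic barycentric computation in the supplementary material. Your proof rests on two facts: (i) every half triangle of a cyclic quadrilateral is inscribed in the circumcircle of $ABCD$, so all four share the circumcenter $O$ and circumradius $R$; and (ii) the Parry reflection point satisfies $|X_3X_{399}|=2R$, because $X_{399}$ is the reflection of $X_3$ in $X_{110}$ and $X_{110}$ lies on the circumcircle (it appears in the paper's own list of circumcircle centers, $n=98$--$112$). Fact (ii) is a documented property in \cite{ETC}, so citing it is consistent with the paper's practice of quoting ETC data; granting it, the conclusion is immediate, since $E$, $F$, $G$, $H$ all lie on the circle centered at $O$ of radius $2R$, which is therefore the circumcircle of $EFGH$, concentric with that of $ABCD$ and with radii in ratio $1:2$. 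What your approach buys over the paper's is a conceptual explanation of why $X_{399}$ is singled out among the centers making $EFGH$ cyclic: it is at a distance from the circumcenter that is a fixed multiple of $R$, independent of the triangle, so the four centers automatically land on a common concentric circle; the same template would apply to any center obtained by reflecting $X_3$ in a point of the circumcircle. Your stated analytic fallback is essentially the paper's actual proof, so nothing is lost if the synthetic citation were ever in doubt.
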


\begin{figure}[h!t]
\centering
\includegraphics[width=0.4\linewidth]{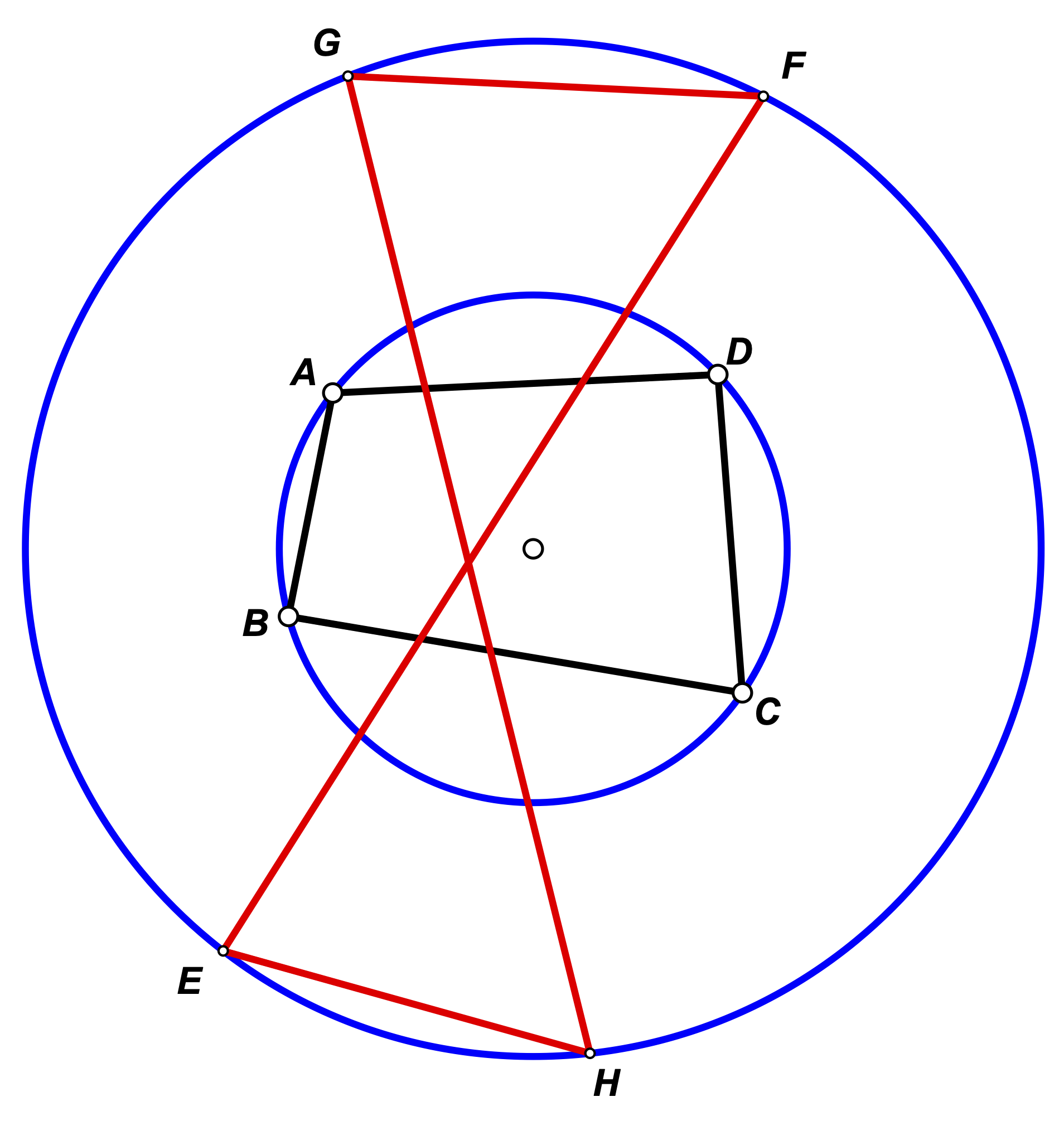}
\caption{cyclic quad with $X_{399}$-points $\implies $  concentric$[1:2](ABCD,EFGH)$}
\label{fig:cqX399}
\end{figure}

An analytic proof of Theorem~\ref{thm:cqX399}
is given in the supplementary material accompanying the on-line publication of this paper.
 
\begin{theorem}
\label{thm:cqXC}
Let $ABCD$ be a cyclic quadrilateral.
Let $X$ be any triangle center that lies on the circumcircle of the reference triangle.
Let $E$, $F$, $G$, $H$ be the $X$-points of $\triangle BCD$, $\triangle CDA$, 
$\triangle DAB$, and $\triangle ABC$, respectively.
Then quadrilaterals $ABCD$ and $EFGH$ have the same circumcircle, i.e. $(ABCD)\equiv(EFGH)$.
\end{theorem}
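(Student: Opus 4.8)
The plan is to exploit the fact that, for a cyclic quadrilateral, each of the four half triangles is inscribed in one and the same circle. First I would let $\Gamma$ denote the circumcircle of $ABCD$, so that all four vertices $A$, $B$, $C$, $D$ lie on $\Gamma$. Each half triangle --- $\triangle BCD$, $\triangle CDA$, $\triangle DAB$, and $\triangle ABC$ --- has its three vertices among $\{A,B,C,D\}$, hence all three of them lie on $\Gamma$. Therefore the circumcircle of each of the four half triangles is exactly $\Gamma$ itself; no computation is needed for this step, only the definition of a cyclic quadrilateral.

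The key observation I would then invoke is the defining hypothesis on $X$. Because $X$ is a triangle center lying on the circumcircle of the reference triangle, this is a property of its center function that holds for \emph{every} triangle, not merely for one fixed triangle. Consequently the $X$-point of each half triangle lies on that triangle's circumcircle, which we have just identified as $\Gamma$. Applying this to the four half triangles in turn gives $E,F,G,H\in\Gamma$. Since the four points $E$, $F$, $G$, $H$ all lie on the single circle $\Gamma$, the quadrilateral $EFGH$ is cyclic and its circumcircle is $\Gamma$. As $\Gamma$ is simultaneously the circumcircle of $ABCD$, we obtain $(ABCD)\equiv(EFGH)$, as claimed.

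There is no genuine computational obstacle here: the argument is purely synthetic and reduces to the near-tautology that a point on the circumcircle of a triangle inscribed in $\Gamma$ is a point of $\Gamma$. The only step deserving a word of care is the interpretation of the hypothesis ``$X$ lies on the circumcircle of the reference triangle'' as an identity valid for every triangle, so that it may legitimately be applied to each half triangle separately; this is precisely what it means for a triangle center, specified by a center function, to lie on the circumcircle. A secondary caveat is the implicit genericity assumption that $E$, $F$, $G$, $H$ are distinct (and not all collinear), so that the circumcircle $(EFGH)$ is well defined; this holds away from the usual degenerate configurations and need not be belabored.
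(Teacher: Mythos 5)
Your proposal is correct and is essentially the paper's own proof: both arguments observe that each half triangle's circumcircle is the circle $\Gamma$ through $A$, $B$, $C$, $D$, and that a center $X$ lying on the circumcircle of its triangle therefore places $E$, $F$, $G$, $H$ on $\Gamma$. Your added remark that the hypothesis on $X$ is an identity valid for every triangle is a reasonable clarification but does not change the argument.
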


\begin{proof}
Let $\Gamma$ be the circumcircle of quadrilateral $ABCD$.
Since $E$ is the $X$-point of $\triangle BCD$, $E$ must lie on the circumcircle of $\triangle BCD$.
Hence $E$ lies on $\Gamma$. In the same way, $F$, $G$, and $H$ must also lie on $\Gamma$.
Therefore. $ABCD$ and $EFGH$ have the same circumcircle, $\Gamma$.
\end{proof}

\begin{open}
Are there purely geometrical proofs for the results found in this section involving centers $X_2$, $X_3$, $X_4$, $X_5$, and $X_6$?
\end{open}

\section{Bicentric Quadrilaterals}

A \emph{bicentric quadrilateral} is a quadrilateral that is both cyclic and tangential.

Our computer study found a few relationships between a bicentric quadrilateral
and its central quadrilateral (using any of the first 1000 centers) that were not true for cyclic quadrilaterals in general.
\medskip

\begin{center}
\begin{tabular}{|l|l|p{2.2in}|}
\hline
\multicolumn{2}{|c|}{\textbf{\color{blue}\large \strut Central Quadrilaterals of Bicentric Quadrilaterals}}\\ \hline
\textbf{Relationship}&\textbf{centers}\\ \hline
\ru $\persp[ABCD,GHEF]$& \no{35, 36, 55, 56, 999}\\ \hline
\end{tabular}
\end{center}


\bigskip

\void{

\begin{theorem}
\label{thm:bqX1}
Let $ABCD$ be a bicentric quadrilateral.
Let $E$, $F$, $G$, and $H$ be the $X_{1}$-points of $\triangle BCD$, $\triangle CDA$, 
$\triangle DAB$, and $\triangle ABC$, respectively (Figure~\ref{fig:bqX1}).
Then quadrilaterals $ABCD$ and $GHEF$ are perspective with perspector $I$.
Also, quadrilateral $EFGH$ is a rectangle.
\end{theorem}

\begin{figure}[h!t]
\centering
\includegraphics[width=0.35\linewidth]{bqX1.png}
\caption{bicentric quadrilateral with $X_1$-points $\implies \persp[ABCD,GHEF]$}
\label{fig:bqX1}
\end{figure}
}

\void{
If $ABCD$ is a quadrangle, the \emph{isotomic conjugate quadrangle} of $ABCD$ is formed by taking
the isotomic conjugate of each vertex of $ABCD$ with respect to the other three vertices.
}

\void{
\begin{proof}
Quadrilaterals $ABCD$ and $GHEF$ are perspective with perspector $I$ by Theorem~\ref{thm:tqX1},
since a bicentric quadrilateral is tangential.
Quadrilateral $EFGH$ is a rectangle by Theorem  B.2 of \cite{shapes}.
\end{proof}
}


\section{Trapezoids}

An \emph{trapezoid} is a quadrilateral with a pair of parallel sides.

Our computer study found only one relationship between a trapezoid
and its central quadrilateral (using any of the first 1000 centers) that is not true for quadrilaterals in general.
It is shown in the following table.
\medskip

\begin{center}
\begin{tabular}{|l|p{2.2in}|}
\hline
\multicolumn{2}{|c|}{\textbf{\color{blue}\large \strut Central Quadrilaterals of Trapezoids}}\\ \hline
\textbf{Relationship}&\textbf{centers}\\ \hline
\ru $ABCD\sim HGFE$&3\\ \hline
\end{tabular}
\end{center}

\void{
\bigskip

\begin{theorem}
\label{thm:trX3}
Let $ABCD$ be a trapezoid.
Let $E$, $F$, $G$, and $H$ be the $X_{3}$ -points of $\triangle BCD$, $\triangle CDA$, 
$\triangle DAB$, and $\triangle ABC$, respectively.
Then quadrilaterals $ABCD$ and $HGFE$ are similar and orthogonal (Figure~\ref{fig:trX3proof}).
\end{theorem}

\begin{figure}[h!t]
\centering
\includegraphics[width=0.35\linewidth]{trX3proof}
\caption{$X_3$-points in half triangles of a trapezoid}
\label{fig:trX3proof}
\end{figure}

\begin{proof}
(???) Since $E$ is the circumcenter of $\triangle BCD$, $EB=EC$.
Thus $E$ lies on the perpendicular bisector of $BC$.
Similarly, $H$ lies on the perpendicular bisector of $BC$.
Hence $EH\perp BC$.
In the same way, $FG\perp AD$.
Since $AD\parallel BC$, we can conclude that $EH\parallel FG$.
In the same way, $HG\perp AB$, $EF\perp CD$, $FH\perp AC$, and $GE\perp AB$.

The angle between two lines is the same as the angle between the perpendiculars to these lines.
Thus $\angle DAB=\angle EHG$, $\angle ABC=\angle HGF$, $\angle CDA=\angle FEH$, $\angle BCD=\angle GFE$,
$\angle EHF=\angle DAC$, and $\angle GFH=\angle BCS$.
Thus $\triangle ABC\sim\triangle HGF$ and $\triangle CDA\sim \triangle FEH$.
Thus $ABCD\sim HGFE$.

This proof is fallacious because the angle between two lines is sometimes the supplement of the angle between the perpendiculars to these lines.
Also, it doesn't use the fact that $AD\parallel BC$ which is a necessary condition.
\end{proof}
}

\section{Tangential Trapezoids}

A \emph{tangential trapezoid} is a trapezoid that is also tangential.

Our computer study did not find any relationships between a tangential trapezoid
and its central quadrilateral (using any of the first 1000 centers) that were not true for trapezoids or tangential quadrilaterals in general.
\medskip

\begin{center}
\begin{tabular}{|l|l|p{2.2in}|}
\hline
\multicolumn{3}{|c|}{\textbf{\color{blue}\large \strut Central Quadrilaterals of Tangential Trapezoids}}\\ \hline
\multicolumn{3}{|c|}{No new relationships were found.}\\
\hline
\end{tabular}
\end{center}

\section{Orthodiagonal Trapezoids}

An \emph{orthodiagonal trapezoid} is a trapezoid that is also orthodiagonal.

Our computer study did not find any relationships between an orthodiagonal trapezoid
and its central quadrilateral (using any of the first 1000 centers) that were not true for trapezoids in general or for orthodiagonal quadrilaterals.
\medskip

\begin{center}
\begin{tabular}{|l|p{2.2in}|}
\hline
\multicolumn{2}{|c|}{\textbf{\color{blue}\large \strut Central Quadrilaterals of Orthodiagonal Trapezoids}}\\ \hline
\multicolumn{2}{|c|}{No new relationships were found.}\\
\hline
\end{tabular}
\end{center}

\void{
\bigskip
\textbf{Note.} If the orthodiagonal trapezoid is also isosceles, we found the unusual result that the central quadrilateral formed from the $X_{930}$
points of the half triangles of quadrilateral $ABCD$ has a common circumconic with $ABCD$.
}


\section{Hjelmslev Quadrilaterals}

A \emph{Hjelmslev quadrilateral} is a quadrilateral with two right angles at opposite vertices.
Hjelmslev quadrilaterals are necessarily cyclic.

Our computer study did not find any relationships between a Hjelmslev quadrilateral
and its central quadrilateral (using any of the first 1000 centers) that were not true for cyclic quadrilaterals in general.
\medskip

\begin{center}
\begin{tabular}{|l|p{2.2in}|}
\hline
\multicolumn{2}{|c|}{\textbf{\color{blue}\large \strut Central Quadrilaterals of Hjelmslev Quadrilaterals}}\\ \hline
\multicolumn{2}{|c|}{No new relationships were found.}\\
\hline
\end{tabular}
\end{center}

\newpage

\section{Isosceles Trapezoids}

An \emph{isosceles trapezoid} is a trapezoid with its nonparallel sides having the same length.
Isosceles trapezoids are necessarily cyclic.

Our computer study found a few relationships between an isosceles trapezoid
and its central quadrilateral (using any of the first 1000 centers) that were not true for cyclic quadrilaterals in general.
They are given in the table below.
\medskip

\begin{center}
\begin{tabular}{|l|p{2.7in}|}
\hline
\multicolumn{2}{|c|}{\textbf{\color{blue}\large \strut Central Quadrilaterals of Isosceles Trapezoids}}\\ \hline
\ru $\persp[ABCD,HGFE]$& 19, 25, 48, 49, 63, 69, 186, 264, 265, 304, 305, 317, 340, 847\\ \hline
\ru QA-P9=$\persp[ABCD,HGFE]$& 24\\ \hline
\end{tabular}
\end{center}



\section{Harmonic Quadrilaterals}
\label{section:harmonic}

A \emph{harmonic quadrilateral} is a cyclic quadrilateral that is also an equalProdOpp quadrilateral.

Our computer study found a few relationships between a harmonic quadrilateral
and its central quadrilateral (using any of the first 1000 centers) that were not true for cyclic quadrilaterals in general.
They are shown in the following table.
\medskip

\begin{center}
\begin{tabular}{|l|l|p{2.2in}|}
\hline
\multicolumn{2}{|c|}{\textbf{\color{blue}\large \strut Central Quadrilaterals of Harmonic Quadrilaterals}}\\ \hline
\textbf{Relationship}&\textbf{centers}\\ \hline
\ru $\persp[ABCD,EFGH]$& 13--18, 61, 62, 371, 372, 395--398, 485, 590, 615\\ \hline
\ru $\persp[ABCD,GHEF]$& 15, 16, 61, 62, 371, 372\\ \hline
\end{tabular}
\end{center}

\bigskip
When proving these results analytically using barycentric coordinates, we use the following result.

\begin{theorem}
Let $ABCD$ be a harmonic quadrilateral. The barycentric coordinates of $D$ with respect to $\triangle ABC$ are $\left(2 a^2: -b^2: 2 c^2\right)$,
where $a=BC$, $b=CA$, and $c=AB$.
\end{theorem}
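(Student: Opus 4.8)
The plan is to work in the barycentric coordinate system set up in the paper, with $A=(1:0:0)$, $B=(0:1:0)$, $C=(0:0:1)$, $D=(p:q:r)$, and $a=BC$, $b=CA$, $c=AB$. A harmonic quadrilateral is cyclic and satisfies the equalProdOpp condition, so I would impose these two defining properties in turn and solve for the ratio $p:q:r$, rather than quoting the compound analytic condition directly.

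First I would record the cyclic condition $a^2 qr + b^2 rp + c^2 pq = 0$. Normalizing $D$ so that $p+q+r=1$ and using the barycentric distance formula $|PQ|^2=-(a^2 vw+b^2 wu+c^2 uv)$ for a displacement $(u,v,w)$ with $u+v+w=0$, the two relevant side lengths of the quadrilateral simplify dramatically on the circumcircle: the combination $a^2qr+b^2rp+c^2pq$ cancels, leaving $DA^2=b^2 r+c^2 q$ and $CD^2=a^2 q+b^2 p$. Since $AB=c$ and $BC=a$ are sides of the reference triangle, the opposite-side pairs of $ABCD$ are $\{AB,CD\}$ and $\{BC,DA\}$, so the equalProdOpp condition $AB\cdot CD=BC\cdot DA$ becomes, after squaring, $c^2(a^2q+b^2p)=a^2(b^2r+c^2q)$. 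The term $a^2c^2q$ cancels and a factor of $b^2$ divides out, leaving the single linear relation $c^2p=a^2r$.

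Next I would substitute $r=c^2p/a^2$ back into the cyclic condition, obtaining $2c^2pq+b^2c^2p^2/a^2=0$, i.e. $c^2p\,(2a^2q+b^2p)=0$. The factor $p=0$ corresponds to the degenerate intersection $D=B=(0:1:0)$ (the line $c^2x=a^2z$ passes through $B$), which I would discard; the nondegenerate branch gives $b^2p+2a^2q=0$. Combining the two linear relations $c^2p=a^2r$ and $b^2p+2a^2q=0$ and scaling so that $p=2a^2$ then forces $q=-b^2$ and $r=2c^2$, hence $D=(2a^2:-b^2:2c^2)$, as claimed.

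The computations are short, so the main care points are bookkeeping rather than genuine obstacles: getting the displacement-vector signs right in the distance formula, correctly identifying which pairs of sides are opposite (so that the right product is squared), and justifying that the extraneous branch $p=0$ is the vertex $B$ rather than the sought point $D$. As an independent check I would verify that $(2a^2:-b^2:2c^2)$ satisfies the circumcircle equation and both table conditions $ra^2=pc^2$ and $pb^2+2qa^2=0$; one may also note the classical interpretation that $D$ is the second intersection with the circumcircle of the $B$-symmedian line $c^2x=a^2z$, which passes through $B$ and the symmedian point $X_6=(a^2:b^2:c^2)$, a hallmark of harmonic quadrilaterals.
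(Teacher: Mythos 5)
Your proof is correct, and it takes a somewhat different route from the paper's. The paper's argument imports as a known fact (citing the literature) that the fourth vertex $D$ of a harmonic quadrilateral is the second intersection of the $B$-symmedian with the circumcircle of $\triangle ABC$, writes the symmedian equation $c^2x-a^2z=0$ from the determinant through $B$ and $X_6=(a^2:b^2:c^2)$, and then solves that line against the circumcircle equation. You instead derive the same linear relation $c^2p=a^2r$ from first principles: imposing the circumcircle condition $a^2qr+b^2rp+c^2pq=0$, using the displacement form of the barycentric distance formula to get $DA^2=b^2r+c^2q$ and $CD^2=a^2q+b^2p$ on the circle, and then squaring the defining relation $AB\cdot CD=BC\cdot DA$. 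From that point on the two arguments coincide (substitute into the circle equation, discard the degenerate factor $p=0$ corresponding to $D=B$, and scale). What your version buys is self-containedness --- the symmedian characterization of harmonic quadrilaterals emerges as a byproduct rather than being quoted --- at the cost of a slightly longer computation; the paper's version is shorter but rests on an external reference. Your identification of the opposite-side pairs $\{AB,CD\}$ and $\{BC,DA\}$ is the correct reading of the equalProdOpp condition, and your handling of the extraneous branch $p=0$ is a detail the paper's proof glosses over (there the analogous second root of the quadratic is the vertex $B$ itself). Both proofs are sound.
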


\begin{proof}
It is known \cite{Wiki-Harmonic} that the point $D$ is the second intersection of the $B$-symmedian with the circumcircle of $\triangle ABC$.

\begin{figure}[h!t]
\centering
\includegraphics[scale=1]{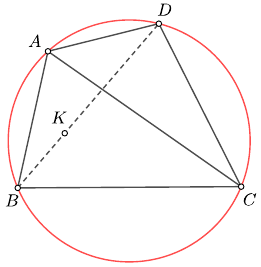}
\label{fig:harmonic}
\end{figure}

The coordinates of the symmedian point, $K$, are $\left(a^2:b^2:c^2\right)$, so the equation of the $B$-symmedian is
\begin{equation*}
	\left|\begin{matrix}
     x & y & z\\ 
     0 & 1 & 0\\
     a^2 & b^2 & c^2 
\end{matrix}\right| =0 \quad \Leftrightarrow \quad c^2x-a^2z=0.
\end{equation*}

The equation of the circumcircle of the reference triangle $ABC$ is well known [12] to be 
\begin{equation*}
	a^2qr + b^2pr + c^2pq = 0.
\end{equation*}
Therefore, the coordinates of the point $D$ are obtained by solving the system
\begin{equation*}
	\left\{ 
    \begin{array}{l}
        a^2qr + b^2pr + c^2pq = 0 \\
        c^2x-a^2z=0
    \end{array} 
\right.
\end{equation*}
from which it is easily found that the coordinates of $D$ are $\left(2 a^2: -b^2: 2 c^2\right)$.
\end{proof}

\begin{open}
Are there purely geometrical proofs for the results found in this section involving centers $X_3$, $X_4$, $X_5$, and $X_6$?
\end{open}


\section{Cyclic Orthodiagonal Quadrilaterals}

An \emph{cyclic orthodiagonal quadrilateral} is a cyclic quadrilateral whose diagonals are perpendicular.

Our computer study found a few relationships between a cyclic orthodiagonal quadrilateral
and its central quadrilateral (using any of the first 1000 centers) that were not true for cyclic quadrilaterals in general
or for orthodiagonal quadrilaterals in general.
These are shown in the following table.
\medskip

\begin{center}
\begin{tabular}{|l|p{2.2in}|}
\hline
\multicolumn{2}{|c|}{\textbf{\color{blue}\large \strut Central Quads of Cyclic Orthodiagonal Quadrilaterals}}\\ \hline
\ru $[ABCD]=[EHGF]$& 68\\ \hline
\ru $\anti[ABCD]=\diag[ABCD]=\stein[EFGH]$& 51--53, 128--130, 137--139, 143\\ \hline
\ru $\centro[ABCD]=\stein[EFGH]$& 568\\ \hline
\ru $\anti[ABCD]=\diag(ABCD)=\diag(EFGH)$& 6, 24, 25, 68, 186, 378, 847, 933\\ \hline
\ru $\m[ABCD]=\stein[EFGH]$& 389\\ \hline
\ru $\m[ABCD]=\diag[EFGH]$& 182, 216, 343\\ \hline
\ru $\persp[ABCD,GHEF]$& 186, 378, 571\\ \hline
\ru QA-P9=$\persp[ABCD,GHEF]$& 24\\ \hline
\end{tabular}
\end{center}

\newpage

\section{Kites}

A \emph{kite} is a quadrilateral consisting of two adjacent sides of length $a$ and the other two sides of length $b$.
A kite is necessarily orthodiagonal. 

Our computer study found a few relationships between a kite
and its central quadrilateral (using any of the first 1000 centers) that were not true for orthodiagonal quadrilaterals in general.
\medskip

\begin{center}
\begin{tabular}{|l|p{2.2in}|}
\hline
\multicolumn{2}{|c|}{\textbf{\color{blue}\large \strut Central Quadrilaterals of Kites}}\\ \hline
\ru $\m[ABCD]=\stein[EFGH]$& 402, 618--620\\ \hline
\ru $\ponce[ABCD]=\stein[EFGH]$& 13, 14\\ \hline
\ru $\stein[ABCD]=\stein[EFGH]$& 616, 617\\ \hline
\end{tabular}
\end{center}

\bigskip

We can assume without loss of generality that $D$ is the reflection of $B$ about $AC$. 
Hence, the barycentric coordinates of $D$ are $(a^2 + b^2 - c^2 : -b^2 : -a^2 + b^2 + c^2)$.

\medskip
When proving these results analytically using barycentric coordinates, we use the following result.

\begin{theorem}
If $ABCD$ is a kite with $AB=AD$ and $CB=CD$, then the Steiner point of its central quadrilateral coincides with the midpoint of $EG$.
\end{theorem}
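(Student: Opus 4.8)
The plan is to derive the result from the reflective symmetry of the kite together with the description of the Gergonne-Steiner point $\stein[EFGH]$ as the common point of the four midray circles of the quadrangle $EFGH$, as recorded in Table~\ref{table:quadcenters}. First I would set up the symmetry. Let $\sigma$ be the reflection in the axis $AC$; since $AB=AD$ and $CB=CD$, the map $\sigma$ fixes $A$ and $C$ and interchanges $B$ and $D$. A Kimberling triangle center is an intrinsic point of its triangle, independent of the labeling of the vertices, and it commutes with isometries; hence applying $\sigma$ to the half triangles gives $\sigma(\triangle BCD)=\triangle BCD$ and $\sigma(\triangle DAB)=\triangle DAB$, while $\sigma$ interchanges $\triangle CDA$ with $\triangle ABC$. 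Consequently $\sigma(E)=E$ and $\sigma(G)=G$, so $E$ and $G$ lie on the axis $AC$, whereas $\sigma(F)=H$ and $\sigma(H)=F$. Thus $EFGH$ is symmetric in the line $EG=AC$, with $F$ and $H$ mirror images.

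Next I would use the midray-circle characterization of QA-P3. For each vertex $V\in\{E,F,G,H\}$ the associated midray circle passes through the midpoints of the three segments joining $V$ to the remaining three vertices, and QA-P3 is the unique point common to these four circles. The crucial observation is immediate for two of them: the midpoint $M$ of $EG$ is literally one of the three midpoints defining the midray circle at $E$ (the midpoint of $EG$) and likewise one of the three defining the midray circle at $G$, so $M$ lies on both of those circles with no computation and no use of symmetry. It therefore remains only to show that $M$ lies on the midray circles at $F$ and at $H$.

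For that step I would introduce coordinates adapted to the symmetry, writing $E=(e,0)$ and $G=(g,0)$ on the axis and $F=(u,v)$, $H=(u,-v)$. The midray circle at $F$ passes through $\tfrac12(F+E)$ and $\tfrac12(F+G)$, which share the height $v/2$, and through $\tfrac12(F+H)=(u,0)$. The first two points force the abscissa of the center to be $\tfrac{2u+e+g}{4}$; since $(u,0)$ and $M=\bigl(\tfrac{e+g}{2},0\bigr)$ are reflections of one another across the vertical line through the center and both lie on the $x$-axis, they are equidistant from the center, so $M$ lies on the circle together with $(u,0)$. The midray circle at $H$ then contains $M$ by $\sigma$-symmetry. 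Hence $M$ lies on all four midray circles, and being their unique common point, $M=\stein[EFGH]$, which is the midpoint of $EG$.

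The step I expect to be the main obstacle is reconciling the precise definition of ``midray circle'' in \cite{EQF} with the midpoint-of-rays reading used above; it is exactly this reading that makes the two incidences in the second paragraph automatic, and if the construction were different those incidences would have to be re-examined. Should the synthetic route prove delicate, the safe fallback is a symbolic verification: place $D=(a^2+b^2-c^2:-b^2:-a^2+b^2+c^2)$, compute $E,F,G,H$ and the coordinates of QA-P3 of $EFGH$ with \texttt{baricentricas.m}, and check the identity $\stein[EFGH]=\tfrac12(E+G)$ directly. One should also assume $E\neq G$, so that $M$ and the common point of the circles are well defined.
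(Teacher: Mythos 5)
Your proof is correct, but it takes a genuinely different and more self-contained route than the paper's. The paper disposes of the theorem in two citations: Theorem~6.27 of \cite{shapes} (the central quadrilateral of a kite is again a kite, with $EF=EH$ and $GF=GH$) and Corollary~10.5 of \cite{relationships} (the Steiner point of a kite is the midpoint of its axis diagonal). You reprove both ingredients from scratch. The first you obtain from the reflection $\sigma$ in $AC$: since $\sigma$ fixes $\triangle BCD$ and $\triangle DAB$ as unordered triangles and swaps $\triangle CDA$ with $\triangle ABC$, and triangle centers commute with isometries, $E$ and $G$ lie on the axis while $F$ and $H$ are mirror images --- exactly the content of the cited shape theorem. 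The second you obtain from the midray-circle characterization of QA-P3; your reading of ``midray circle'' (the circle through the midpoints of the three segments from one vertex to the other three) agrees with \cite{EQF} and with Table~\ref{table:quadcenters}, so the two automatic incidences at $E$ and $G$ and the reflection trick for the circle at $F$ are all legitimate, and the coordinate computation checks out. The one step worth tightening is the closing appeal to uniqueness: four circles through a common point could in principle share a second one, so you should observe that the midray circles at $E$ and at $G$ are both centered on the axis and both meet it at $M$, hence are tangent there and (for a nondegenerate kite) have no other common point, which pins down $M$ as the Gergonne--Steiner point. What your argument buys is independence from the two earlier papers and an explicit elementary identification of the Steiner point; what the paper's version buys is brevity by reusing already-established machinery.
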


\begin{figure}[h!t]
\centering
\includegraphics[width=0.25\linewidth]{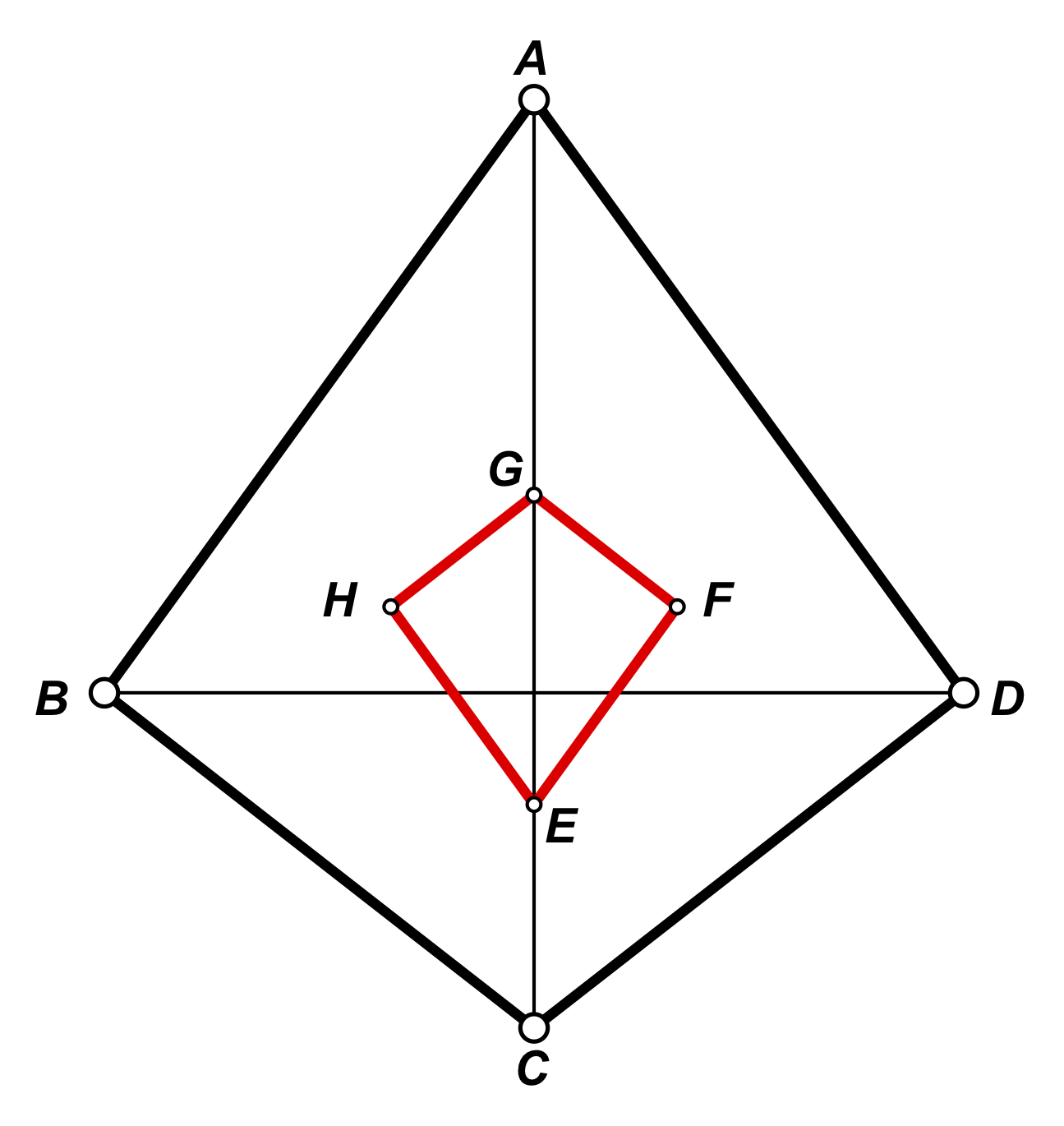}
\caption{kite $\implies$ kite}
\label{fig:halfKite}
\end{figure}

\begin{proof}
From Theorem~6.27 of \cite{shapes}, the central quadrilateral $EFGH$ is a kite with $EF=EH$ and $GF=GH$ (Figure~\ref{fig:halfKite}).
Then, by Corollary~10.5 of \cite{relationships}, the Steiner point of $EFGH$ coincides with the midpoint of $EG$.
\end{proof}


\section{AP Quadrilaterals}

An \emph{AP quadrilateral} is a quadrilateral whose sides (in order) form an arithmetic progression.

Our computer study found no relationships between an AP quadrilateral
and its central quadrilateral (using any of the first 1000 centers) that were not true for quadrilaterals in general.
\medskip

\begin{center}
\begin{tabular}{|l|p{2.2in}|}
\hline
\multicolumn{2}{|c|}{\textbf{\color{blue}\large \strut Central Quadrilaterals of AP Quadrilaterals}}\\ \hline
\multicolumn{2}{|c|}{No new relationships were found.}\\ \hline
\end{tabular}
\end{center}


\section{Equidiagonal Orthodiagonal Quadrilaterals}

An \emph{equidiagonal orthodiagonal quadrilateral} is a quadrilateral in which the two diagonals are both equal and perpendicular.

Our computer study found a few relationships between an equidiagonal orthodiagonal quadrilateral
and its central quadrilateral (using any of the first 1000 centers) that are not true for orthodiagonal quadrilaterals in general.
These are shown in the following table.
\medskip

\begin{table}[ht!]
\caption{}
\begin{center}
\begin{tabular}{|l|p{1.5in}|}
\hline
\multicolumn{2}{|c|}{\textbf{\color{blue}\large \strut Central Quads of Equidiagonal Orthodiagonal Quadrilaterals}}\\ \hline
\textbf{Relationship}&\textbf{centers}\\ \hline
\ru $\m[ABCD]=\m[EFGH]$&489\\ \hline
\ru $\stein[ABCD]=\stein[EFGH]$&638\\ \hline
\ru $\m[ABCD]=\stein[EFGH]$&640\\ \hline
\ru QA-P1=$\persp[ABCD,GHEF]$&485\\ \hline
\ru QA-P5=$\persp[ABCD,GHEF]$&68\\ \hline
\ru QA-P3=$\persp[ABCD,EFGH]$&637\\ \hline
\ru QA-P4=$\persp[ABCD,EFGH]$&489\\ \hline
\end{tabular}
\end{center}
\end{table}

\section{Exbicentric Quadrilaterals}

An \emph{exbicentric quadrilateral} is a cyclic quadrilateral that is also extangential.

Our computer study did not find any relationships between an exbicentric quadrilateral
and its central quadrilateral (using any of the first 1000 centers) that were not true for cyclic quadrilaterals in general.


\medskip

\begin{center}
\begin{tabular}{|l|p{2.2in}|}
\hline
\multicolumn{2}{|c|}{\textbf{\color{blue}\large \strut Central Quadrilaterals of Exbicentric Quadrilaterals}}\\ \hline
\multicolumn{2}{|c|}{No new relationships were found.}\\
\hline
\end{tabular}
\end{center}


\section{Parallelograms}

A \emph{parallelogram} is a quadrilateral in which both pairs of opposite sides are parallel.

Our computer study found hundreds of relationships between a parallelogram and its central quadrilateral.
Instead of listing all relationships found, we only list a few of the interesting relationships.

\medskip

\begin{center}
\begin{tabular}{|l|p{2.2in}|}
\hline
\multicolumn{2}{|c|}{\textbf{\color{blue}\large \strut Central Quadrilaterals of Parallelograms}}\\ \hline
\textbf{Relationship}&\textbf{centers}\\ \hline
\ru QA-P1=$\conic(ABCD,EFGH)$=QA-P1&7, 13, 14, 17, 18, 66, 330, 485, 486\\ \hline
\end{tabular}
\end{center}

\section{Bicentric Trapezoids}

A \emph{bicentric trapezoid} is a trapezoid that is also bicentric.

A bicentric trapezoid is necessarily an isosceles trapezoid.

Our computer study found a few relationships between a bicentric trapezoid
and its central quadrilateral (using any of the first 1000 centers) that are not true for bicentric quadrilaterals or isosceles trapezoids  in general.
These are shown in the following table.

\medskip

\begin{center}
\begin{tabular}{|l|p{3.2in}|}
\hline
\multicolumn{2}{|c|}{\textbf{\color{blue}\large \strut Central Quadrilaterals of Bicentric Trapezoids}}\\ \hline
\textbf{Relationship}&\textbf{centers}\\ \hline
\ru $\persp[ABCD,GHEF]$& 35, 36, 55, 56, 145, 999\\ \hline
\ru $\persp[ABCD,HGFE]$&49, 63, 92, 186, 265, 304, 305, 317, 328, 563\\ \hline
\end{tabular}
\end{center}


\bigskip

\void{
\begin{theorem}
Let $ABCD$ be a bicentric trapezoid.
Let $E$, $F$, $G$, and $H$ be the $X_{382}$-points of $\triangle BCD$, $\triangle CDA$, 
$\triangle DAB$, and $\triangle ABC$, respectively (Figure~\ref{fig:btX382}).
Then $ABCD$ and $EFGH$ are homothetic with ratio of similarity~$1/2$.
\end{theorem}

\begin{figure}[h!t]
\centering
\includegraphics[width=0.3\linewidth]{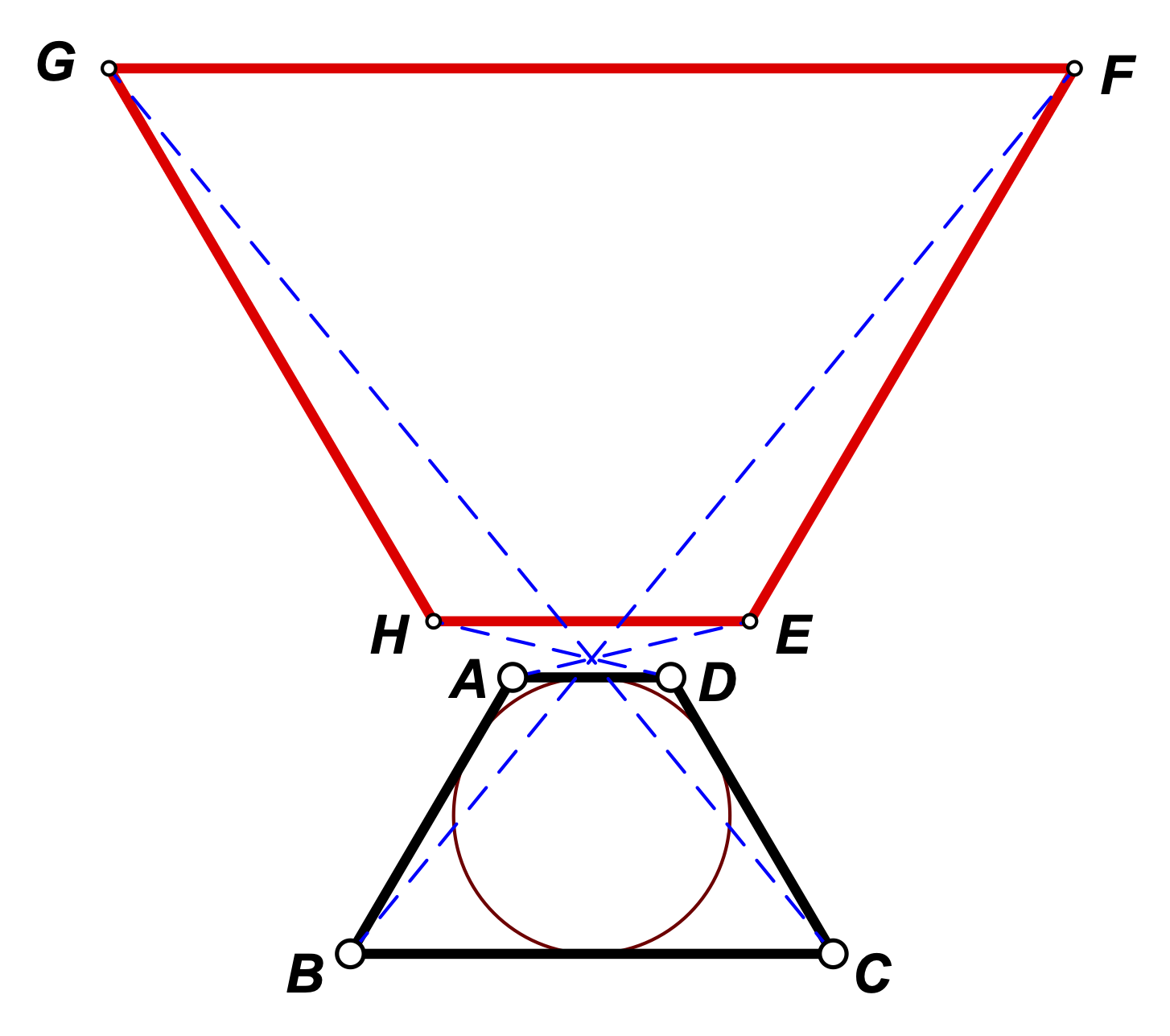}
\caption{Bicentric Trapezoid with $X_{382}$-points $\implies \ho[ABCD,EFGH]$}
\label{fig:btX382}
\end{figure}
}


\section{Rhombi}

A \emph{rhombus} is a quadrilateral all of whose sides have the same length.

Our computer study found hundreds of relationships between a rhombus and its central quadrilateral.
Instead of listing all relationships found, we only list a few of the interesting relationships.

\medskip

\begin{center}
\begin{tabular}{|p{2.4in}|p{1in}|}
\hline
\multicolumn{2}{|c|}{\textbf{\color{blue}\large \strut Central Quadrilaterals of Rhombi}}\\ \hline
\textbf{Relationship}&\textbf{centers}\\ \hline
\ru $[ABCD]=3[EFGH]$&13, 14\\ \hline
\ru $[ABCD]=4[EFGH]$&402, 620\\ \hline
\ru $[ABCD]=9[EFGH]$&290, 671, 903\\ \hline
\ru $[EFGH]=4[ABCD]$&446\\ \hline
\end{tabular}
\end{center}

\bigskip

\begin{open}
Are there purely geometrical proofs for the results found in this section involving centers $X_{13}$ and $X_{14}$?
\end{open}

\newpage

\section{Rectangles}

A \emph{rectangle} is a quadrilateral all of whose angles are right angles.

Our computer study found hundreds of relationships between a rectangle and its central quadrilateral.
Instead of listing all relationships found, we only list a few of the interesting relationships.

\medskip

\begin{center}
\begin{tabular}{|p{3in}|p{2.4in}|}
\hline
\multicolumn{2}{|c|}{\textbf{\color{blue}\large \strut Central Quadrilaterals of Rectangles}}\\ \hline
\textbf{Relationship}&\textbf{centers}\\ \hline
\ru $[ABCD]=25[EFGH]$&95\\ \hline
\ru $[ABCD]=2[EFGH]$&946\\ \hline
\ru $[ABCD]=4[EFGH]$&402\\ \hline
\ru $[EFGH]=9[ABCD]$&23\\ \hline
\ru $[ABCD]/[EFGH]=25/4$&233\\ \hline
\ru QA-P1=$\hyperb(ABCD,EFGH)$=QA-P1&251, 315, 481, 850, 961, 998\\ \hline
\ru $\partial[ABCD]=\partial[EFGH]$&46, 47, 117, 163, 579, 580, 920\\ \hline
\end{tabular}
\end{center}



\bigskip
When proving these results analytically using barycentric coordinates, we use the following result which is Theorem 6.32 in \cite{shapes}.

\begin{theorem}
If $ABCD$ be a rectangle, then the central quadrilateral is also a rectangle.
\end{theorem}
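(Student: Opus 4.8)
The plan is to avoid direct computation and instead exploit the two axes of reflective symmetry possessed by every rectangle. I would place $ABCD$ so that its center is the origin and its sides are parallel to the coordinate axes, say $A=(-w,-h)$, $B=(w,-h)$, $C=(w,h)$, $D=(-w,h)$. Let $\sigma_v\colon(x,y)\mapsto(-x,y)$ and $\sigma_h\colon(x,y)\mapsto(x,-y)$ be the reflections in the two perpendicular bisectors of the sides; each is an isometry of the plane that carries the rectangle to itself. The single fact the argument rests on is that a Kimberling triangle center is equivariant under every isometry and is independent of the labeling of the vertices: if $T$ is an isometry and $X_n(\triangle PQR)$ denotes the center of the (unordered) triangle, then $T\bigl(X_n(\triangle PQR)\bigr)=X_n\bigl(\triangle T(P)T(Q)T(R)\bigr)$.

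First I would record how $\sigma_v$ and $\sigma_h$ permute the four half triangles. Since $\sigma_v$ interchanges $A\leftrightarrow B$ and $C\leftrightarrow D$, it sends $\triangle BCD\mapsto\triangle ACD$ and $\triangle DAB\mapsto\triangle ABC$; by the equivariance property this gives $\sigma_v(E)=F$ and $\sigma_v(G)=H$, together with their inverses. Likewise $\sigma_h$ interchanges $A\leftrightarrow D$ and $B\leftrightarrow C$, sending $\triangle BCD\mapsto\triangle ABC$ and $\triangle ACD\mapsto\triangle ABD$, so that $\sigma_h(E)=H$ and $\sigma_h(F)=G$. Writing $E=(e_1,e_2)$, these relations force
\[
F=(-e_1,e_2),\qquad G=(-e_1,-e_2),\qquad H=(e_1,-e_2).
\]
Thus the four centers are the points $(\pm e_1,\pm e_2)$, which are the vertices of a rectangle centered at the origin with sides parallel to the axes. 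Equivalently, the diagonals $EG$ and $FH$ both pass through the origin, have equal length, and bisect each other, so $EFGH$ is a rectangle.

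The step I expect to be the main obstacle is the equivariance claim itself, and in particular its validity for the \emph{orientation-reversing} maps $\sigma_v$ and $\sigma_h$. I would justify it through trilinear coordinates: the trilinears of $X_n$ are computed from the side lengths alone, and a reflection preserves all side lengths while carrying the interior of a triangle to the interior of its image, so the directed distances defining the trilinear coordinates, and hence the center, are preserved. The well-definedness of $X_n$ under relabeling, used when identifying $\triangle CDA$ with $\triangle ACD$ and $\triangle DAB$ with $\triangle ABD$, follows from the symmetry of the center function in its last two arguments. A minor caveat worth stating is degeneracy: if $e_1=0$ or $e_2=0$, which occurs for certain special centers or when $ABCD$ happens to be a square, the rectangle collapses to a segment, so the conclusion is understood to assert a genuine rectangle for the generic configuration.
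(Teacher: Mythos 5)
Your argument is correct, and it is genuinely different from what the paper does: the paper gives no proof at all here, simply citing the statement as Theorem~6.32 of its companion paper \cite{shapes} (whose proofs are generally computational, via barycentric coordinates). Your reflection-symmetry argument is self-contained and works uniformly for every Kimberling center at once: the two axial symmetries of the rectangle permute the half triangles as you describe, equivariance of centers under isometries (including orientation-reversing ones, since trilinear/barycentric coordinates depend only on side lengths) forces $F=\sigma_v(E)$, $H=\sigma_h(E)$, $G=\sigma_h(\sigma_v(E))$, and the four points $(\pm e_1,\pm e_2)$ taken in the order $EFGH$ are the vertices of an axis-parallel rectangle. What this buys over the computational route is transparency and generality --- the same argument shows, for instance, that one axis of symmetry already forces the central quadrilateral of an isosceles trapezoid to be an isosceles trapezoid. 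Your degeneracy caveat is not merely cosmetic: for $X_3$ all four circumcenters of the half triangles of a rectangle coincide at the center (each half triangle is right-angled with a diagonal of $ABCD$ as hypotenuse), so $e_1=e_2=0$ and $EFGH$ collapses to a point; the theorem as stated must be read as excluding, or trivially including, such degenerate cases. With that qualification made explicit, the proof is complete.
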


Figure~\ref{fig:reX46} shows the case when $n=46$.
Because of this theorem, to prove that the reference quadrilateral and the central quadrilateral have the same perimeter ($\partial[ABCD]=\partial[EFGH]$),
it is only necessary to prove that $AB+BC=EF+FG$.

\begin{figure}[h!t]
\centering
\includegraphics[width=0.35\linewidth]{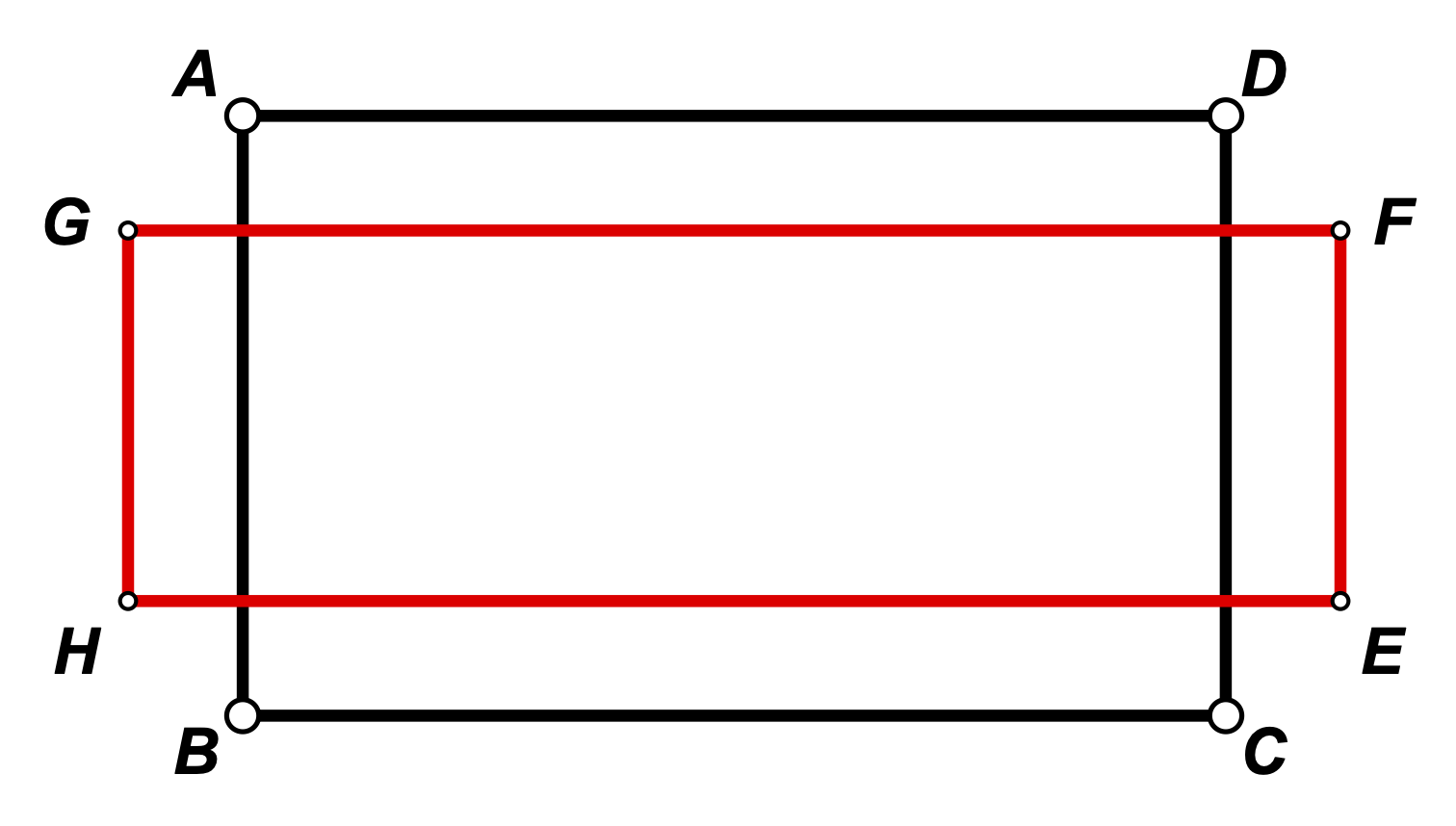}
\caption{Rectangle with $X_{46}$-points $\implies \partial[ABCD]=\partial[EFGH]$}
\label{fig:reX46}
\end{figure}

\newpage

\section{Squares}

A \emph{square} is a rectangle that is also a rhombus.

Our computer study found hundreds of relationships between a square and its central quadrilateral.
Instead of listing all relationships found, we only list a few of the interesting relationships.

\medskip

\begin{center}
\begin{tabular}{|p{3in}|p{2.4in}|}
\hline
\multicolumn{2}{|c|}{\textbf{\color{blue}\large \strut Central Quadrilaterals of Squares}}\\ \hline
\textbf{Relationship}&\textbf{centers}\\ \hline
\ru $[ABCD]=3[EFGH]$&13, 14\\ \hline
\ru $[ABCD]=49[EFGH]$&183, 252\\ \hline
\ru $[ABCD]=8[EFGH]$&496, 613, 988\\ \hline
\ru $[EFGH]=25[ABCD]$&352\\ \hline
\end{tabular}
\end{center}


\bigskip

\begin{open}
Are there purely geometrical proofs for the results found in this section involving centers $X_{13}$ and $X_{14}$?
\end{open}


\section{Areas for Future Research}

There are many avenues for future investigation.

\subsection{Investigate other triangle centers}\ \\

In our study, we only investigated triangle centers $X_n$ for $n\leq 1000$.
Extend this study to larger values of $n$.

As an example, the following result was found by Ercole Suppa \cite{Suppa}.

\begin{theorem}
\label{thm:cyX1173}
Let $ABCD$ be a cyclic quadrilateral.
Let $E$, $F$, $G$, and $H$ be the $X_{1173}$-points of $\triangle BCD$, $\triangle CDA$, 
$\triangle DAB$, and $\triangle ABC$, respectively.
Then quadrilaterals $ABCD$ and $EFGH$ have a common circumconic (Figure~\ref{fig:cqX1173}).
\end{theorem}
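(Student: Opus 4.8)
The statement asserts that the eight points $A,B,C,D,E,F,G,H$ lie on a single conic, so the plan is to exhibit such a conic explicitly in barycentric coordinates, following the analytic method of Section~\ref{section:methodology}. First I would set up coordinates with $\triangle ABC$ as reference triangle, so that $A=(1:0:0)$, $B=(0:1:0)$, $C=(0:0:1)$ and $D=(p:q:r)$, and impose the concyclicity constraint
\begin{equation*}
a^2 qr + b^2 rp + c^2 pq = 0.
\end{equation*}
The crucial simplification is that every conic through $A$, $B$, and $C$ has the reduced form $fyz+gzx+hxy=0$, since the coefficients of $x^2,y^2,z^2$ are forced to vanish once $A,B,C$ lie on it. Hence a common circumconic exists precisely when there is a nonzero triple $(f:g:h)$ for which $D,E,F,G,H$ all satisfy this single equation.

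For a point $P=(x:y:z)$ write $\rho(P)=(yz,\,zx,\,xy)$, so that $P$ lies on the conic exactly when $\rho(P)\cdot(f,g,h)=0$. The vertex $D$ contributes the explicit row $\rho(D)=(qr,rp,pq)$, while the four centers are obtained from the call \texttt{CentralQuadrilateral[1173]}, i.e.\ by applying \texttt{CentroETCTriangulo} with the center function of $X_{1173}$ (read off from \cite{ETC}) to the triples $\{B,C,D\}$, $\{A,C,D\}$, $\{A,B,D\}$, $\{A,B,C\}$. I would then determine the candidate conic from three of the points, taking $(f:g:h)=\rho(D)\times\rho(E)$, the cross product that gives the unique member of the pencil of conics through $A,B,C,D,E$, and verify the remaining incidences by checking that the three scalar triple products
\begin{equation*}
\det\!\bigl[\rho(D),\rho(E),\rho(F)\bigr],\quad
\det\!\bigl[\rho(D),\rho(E),\rho(G)\bigr],\quad
\det\!\bigl[\rho(D),\rho(E),\rho(H)\bigr]
\end{equation*}
all vanish. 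Equivalently, one shows that the $5\times 3$ matrix with rows $\rho(D),\rho(E),\rho(F),\rho(G),\rho(H)$ has rank at most $2$; each vanishing is established by reducing the resulting polynomial modulo the concyclicity relation (via \texttt{Simplify} with that constraint, or \texttt{PolynomialReduce}).

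The main obstacle is purely computational: the barycentric coordinates of $X_{1173}$ in a general triangle are high-degree polynomials in the side lengths, and after substituting the side lengths of the four half triangles (expressed through $a,b,c,p,q,r$) the three determinants become very large. The essential feature is that they are not identically zero but vanish only once the relation $a^2qr+b^2rp+c^2pq=0$ is invoked, so it is exactly the reduction modulo that single relation that carries the proof—the kind of step delegated to Mathematica together with \texttt{baricentricas.m}. A cleaner, purely geometric argument would instead require identifying the special role $X_{1173}$ plays with respect to the common circumcircle $\Gamma$ of the four half triangles—much as Theorem~\ref{thm:cqXC} exploits centers that lie on $\Gamma$ itself—and recognizing which member of the pencil of conics through $A,B,C,D$ the four centers populate; pinning down that description is the genuinely hard part and, as the surrounding open questions suggest, remains the open geometric problem.
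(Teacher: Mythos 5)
A point of calibration first: the paper contains no proof of Theorem~\ref{thm:cyX1173}. It is stated in the ``Areas for Future Research'' section as a computer-discovered result attributed to personal correspondence \cite{Suppa}, and the only justification the paper offers is indirect, via Theorem~\ref{thm:cyXn} (cited to \cite{Dylan2} and itself not proved here): the common-circumconic property holds for every $X_n$ whose isogonal conjugate lies on the Euler line and has constant Shinagawa coefficients. The paper's list $4, 6, 54, 64, 1173,\dots$ is precisely the list of isogonal conjugates of $X_3, X_2, X_5, X_{20}, X_{140},\dots$, and $X_{140}$ appears on the paper's own list of Euler-line centers with constant Shinagawa coefficients; so the intended explanation of this instance is that it is a special case of Theorem~\ref{thm:cyXn}. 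Your proposal never makes this connection, which is the one piece of genuinely available mathematical content surrounding the statement.

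As for your computational plan, the framework is correct: a common circumconic must pass through $A$, $B$, $C$ and hence has the form $fyz+gzx+hxy=0$; pinning down $(f:g:h)$ from $D$ and $E$ and checking that the three determinants vanish modulo $a^2qr+b^2rp+c^2pq=0$ (equivalently, that your $5\times 3$ matrix of $\rho$-vectors has rank at most $2$) is exactly what the methodology of Section~5 prescribes. The gap is that this is a description of a computation rather than the computation itself: nothing in the proposal establishes that the determinants actually do reduce to zero on the concyclicity variety, and that reduction is the entire content of the theorem --- you acknowledge as much by calling it the step ``delegated to Mathematica.'' Two smaller omissions: you should dispose of the degenerate case in which $\rho(D)$ and $\rho(E)$ are proportional before forming their cross product, and since the paper's $\mathrm{conic}$ relation is defined to mean a \emph{noncircular} common circumconic, you should also note that the resulting conic is not the circumcircle of $ABCD$ --- for instance by observing that $E$ does not lie on that circle, which is exactly how $X_{1173}$ differs from the centers handled by Theorem~\ref{thm:cqXC}.
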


\begin{figure}[h!t]
\centering
\includegraphics[width=0.5\linewidth]{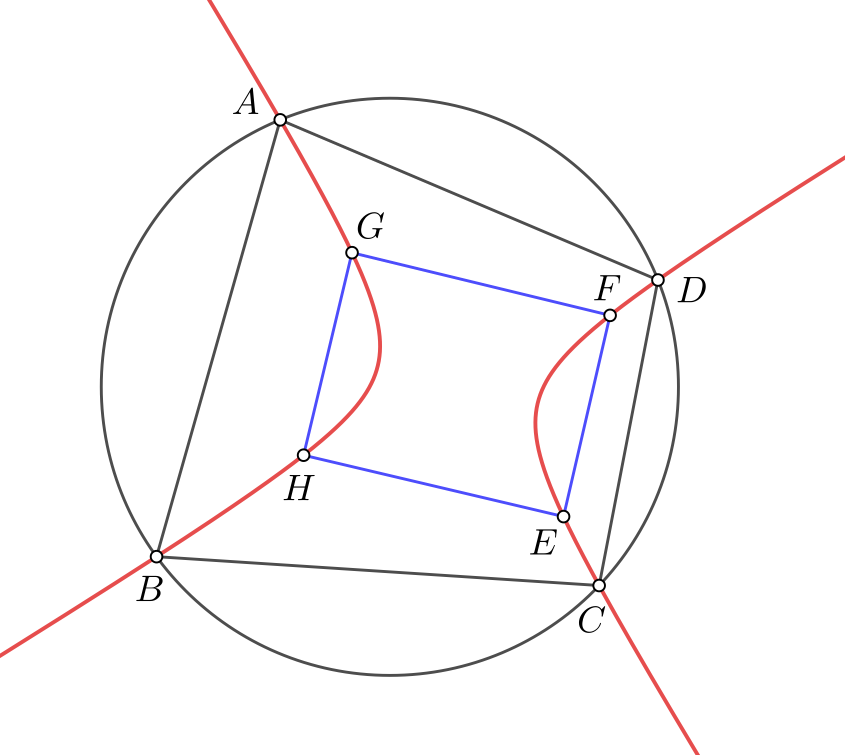}
\caption{cyclic quadrilateral with $X_{1173}$-points $\implies \conic[ABCD,EFGH]$}
\label{fig:cqX1173}
\end{figure}


\subsection{Use other shape quadrilaterals}\ \\

In our investigation, we only studied 28 shapes of quadrilaterals as shown in Figure~\ref{fig:quadShapes}.
There are many other shapes of quadrilaterals. Study these other shapes. For example, 
we say that a quadrilateral is \emph{orthoptic} if its opposite sides are perpendicular. Figure~\ref{fig:ooX6} shows an orthoptic quadrilateral in which
$AB\perp CD$ and $BC\perp AD$.
The following result was found by computer.

\begin{theorem}
\label{thm:ooX6}
Let $ABCD$ be an orthoptic quadrilateral.
Let $E$, $F$, $G$, and $H$ be the symmedian points ($X_{6}$-points) of $\triangle BCD$, $\triangle CDA$, 
$\triangle DAB$, and $\triangle ABC$, respectively.
Then quadrilaterals $ABCD$ and $EFGH$ are perspective.
The perspector is the Euler-Poncelet point (QA-P2) of quadrilateral $ABCD$.
\end{theorem}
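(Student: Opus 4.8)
The plan is to prove the result analytically in barycentric coordinates, following the method used for Theorem~\ref{thm:odX5}. Set $A=(1:0:0)$, $B=(0:1:0)$, $C=(0:0:1)$, and $D=(p:q:r)$. Writing the two orthoptic hypotheses $AB\perp CD$ and $BC\perp AD$ through the barycentric orthogonality form (with Conway symbols $S_A=\tfrac12(b^2+c^2-a^2)$, $S_B=\tfrac12(a^2+c^2-b^2)$, $S_C=\tfrac12(a^2+b^2-c^2)$) reduces them to the remarkably symmetric pair $S_A p=S_B q$ and $S_B q=S_C r$, i.e.\ $S_A p=S_B q=S_C r$. Adding the two relations gives $S_A p=S_C r$, which is precisely $AC\perp BD$; thus an orthoptic quadrilateral is automatically orthodiagonal, so Theorem~\ref{thm:odX4} and the rest of the orthodiagonal machinery become available.

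The structural observation driving the proof is that these three mutual perpendicularities say exactly that $\{A,B,C,D\}$ is an \emph{orthocentric system}: $D$ is the orthocenter of $\triangle ABC$, and symmetrically each vertex is the orthocenter of the triangle on the other three. Consequently the four half triangles share a single common nine-point circle, a fact I expect to organise the whole computation and which resonates with Theorem~\ref{thm:gqX4conic}. Next I would generate the central quadrilateral with \texttt{CentralQuadrilateral[6]}, i.e.\ apply \texttt{CentroETCTriangulo} with the symmedian center function $(a^2:b^2:c^2)$ to each of $\triangle BCD$, $\triangle CDA$, $\triangle DAB$, $\triangle ABC$, obtaining $E,F,G,H$ as homogeneous expressions in $p,q,r$. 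Perspectivity of $ABCD$ and $EFGH$ then amounts to the concurrence of the four lines $AE$, $BF$, $CG$, $DH$. Because $A$, $B$, $C$ are the reference vertices these lines are very simple ($AE$ is $e_3y-e_2z=0$, $BF$ is $f_3x-f_1z=0$, $CG$ is $g_2x-g_1y=0$, while $DH$ is the cross product $D\times H$); I would intersect $AE$ with $BF$ and verify, using \texttt{Simplify[\dots, orthoptic]} against the ideal generated by $S_A p-S_B q$ and $S_B q-S_C r$, that the common point also lies on $CG$ and on $DH$.

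The delicate step, and the one I expect to be the main obstacle, is identifying that perspector as the Euler--Poncelet point $\ponce[ABCD]$ (QA-P2). The difficulty here is conceptual as well as computational: for an orthocentric system the four nine-point circles coincide, so the description of QA-P2 as their common point degenerates, and its rectangular-circumhyperbola characterization degenerates too, since \emph{every} circumconic of an orthocentric system is already a rectangular hyperbola. Indeed, computing the perspector of the rectangular circumhyperbola as $\bigl(p(rS_C-qS_B):q(pS_A-rS_C):r(qS_B-pS_A)\bigr)$ shows it collapses to $(0:0:0)$ on the locus $S_A p=S_B q=S_C r$, so the perspector cannot be pinned down from the geometric definition. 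Instead I would take the scaled CT-coordinates of QA-P2 from \cite{EQF}, clear the common factor that the constraints force before substituting so as to control the $0/0$ behaviour, and then show by exact symbolic computation that the resulting point is proportional to the perspector found above. The remaining simplifications, though bulky, are routine for \texttt{baricentricas.m}; a purely synthetic argument built on the shared nine-point circle of the orthocentric system would be a desirable alternative but appears harder to drive all the way to the precise perspector.
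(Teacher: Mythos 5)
The paper itself offers no proof of Theorem~\ref{thm:ooX6}: it is reported as a computer discovery and is immediately followed by an open question asking for a geometric proof, so your proposal can only be judged on its own merits. Your opening analysis is correct and contains the key idea: the conditions $AB\perp CD$ and $BC\perp AD$ do reduce to $S_Ap=S_Bq=S_Cr$, which says precisely that $D=(S_BS_C:S_CS_A:S_AS_B)$ is the orthocenter of $\triangle ABC$, so an orthoptic quadrilateral is an orthocentric system; the four half triangles then share not only one nine-point circle but also one orthic triangle, whose vertices are the diagonal points of $ABCD$. The concurrence claim therefore goes through, and in fact has a short synthetic proof you nearly reach: $A$ is the orthocenter of $\triangle BCD$, so line $AE$ is the $X_4X_6$-line of $\triangle BCD$, which is known to pass through $X_{53}(\triangle BCD)$, the symmedian point of its orthic triangle; since all four half triangles have the same orthic triangle they have the same $X_{53}$, and all four lines $AE$, $BF$, $CG$, $DH$ pass through it. Explicitly the perspector is $\bigl(S_BS_C(S^2+S_BS_C):S_CS_A(S^2+S_CS_A):S_AS_B(S^2+S_AS_B)\bigr)$, i.e.\ $X_{53}$ of $\triangle ABC$.

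The genuine gap is your final step, and it cannot be repaired: the perspector is not the Euler-Poncelet point, and no regularization of the degenerate coordinates will make it so. You correctly note that QA-P2 degenerates on the orthocentric locus, but the plan to ``clear the common factor'' fails for two reasons. First, each CT coordinate of QA-P2 is a product of two factors \emph{both} of which vanish when $S_Ap=S_Bq=S_Cr$ (the coordinates vanish to second order), so there is no single factor to clear and the limit depends on the direction of approach. Second, and decisively, the center of any rectangular circumhyperbola of $\{A,B,C,D\}$ lies on the common nine-point circle, so every limiting value of QA-P2 lies \emph{on} that circle, whereas the actual perspector $X_{53}$ is the symmedian point of the orthic triangle and hence lies strictly \emph{inside} its circumcircle, which is that same nine-point circle. (A concrete check with $A=(0,0)$, $B=(1,2)$, $C=(3,0)$, $D=(1,1)$ gives perspector $(0.96,1.08)$ at distance about $0.44$ from the nine-point center $(1.25,0.75)$, against a nine-point radius of about $0.79$.) The reported identification is almost certainly a software artifact: when the CT coordinates of QA-P2 evaluate to the zero vector, a cross-product coincidence test is vacuously satisfied. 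The statement you should prove instead is that the perspector is the common $X_{53}$-point of the four half triangles, equivalently the symmedian point of the diagonal triangle of $ABCD$.
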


\begin{figure}[h!t]
\centering
\includegraphics[width=0.5\linewidth]{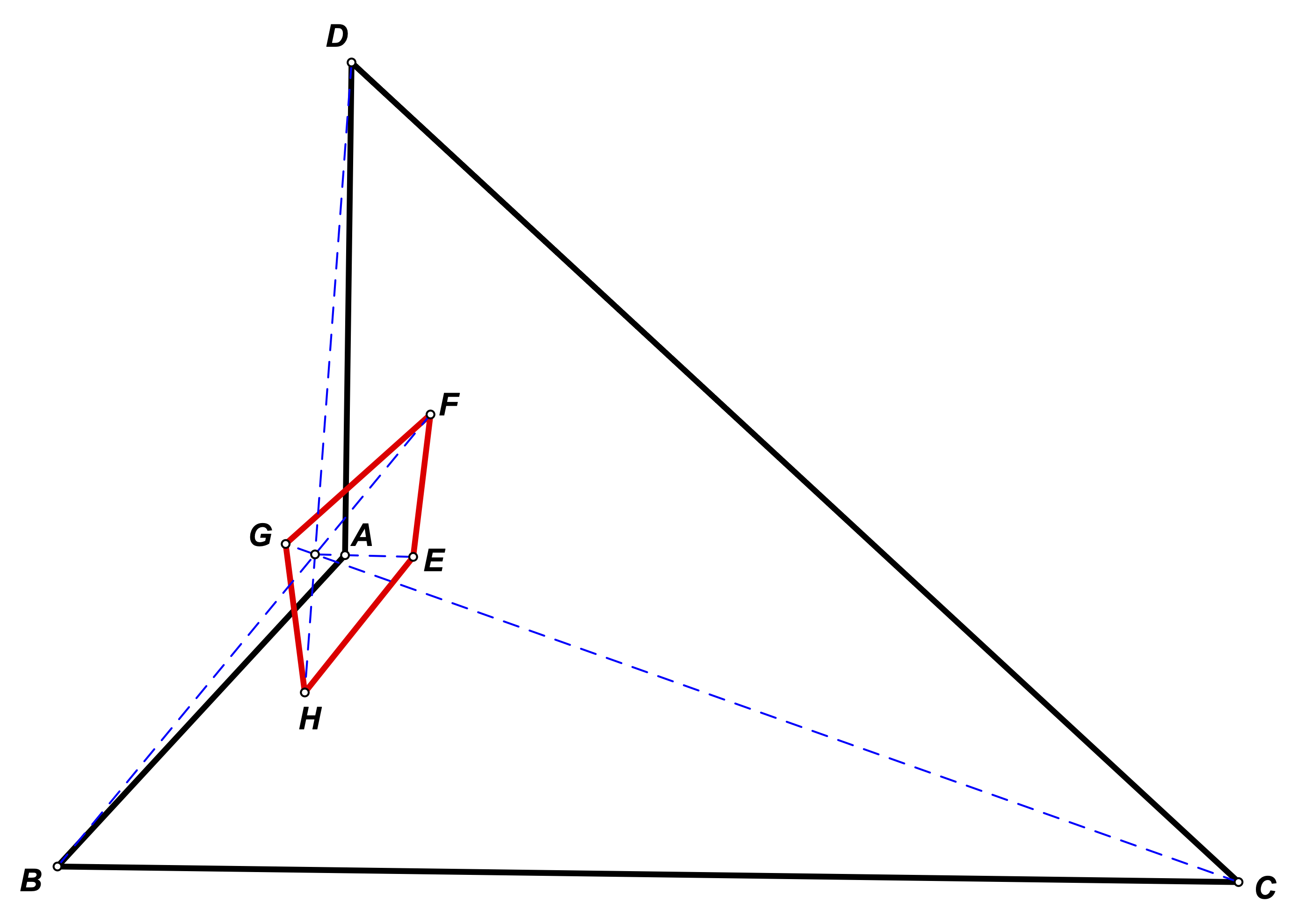}
\caption{orthoptic quadrilateral with $X_6$-points $\implies \persp[ABCD,EFGH]$}
\label{fig:ooX6}
\end{figure}

\begin{open}
Is there a purely geometrical proof of this result?
\end{open}

An \emph{orthocentric} quadrilateral is a quadrilateral in which each vertex is the orthocenter of the triangle
formed by the other three vertices.
The following result was found by computer.

\begin{theorem}
\label{thm:ocX11}
Let $ABCD$ be an orthocentric quadrilateral.
Let $E$, $F$, $G$, and $H$ be the Feuerbach points ($X_{11}$-points) of $\triangle BCD$, $\triangle CDA$, 
$\triangle DAB$, and $\triangle ABC$, respectively.
Then quadrilateral $EFGH$ is cyclic and the center of the circumcircle of $EFGH$
coincides with the centroid of quadrilateral $ABCD$ (Figure~\ref{fig:ocX11}).
\end{theorem}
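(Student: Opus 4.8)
The plan is to reduce everything to the classical nine-point-circle theory of an orthocentric system. The central observation is that the four triangles $\triangle BCD$, $\triangle CDA$, $\triangle DAB$, and $\triangle ABC$ do not merely happen to have concyclic Feuerbach points: they literally share a single nine-point circle. Since the Feuerbach point $X_{11}$ of any triangle lies on that triangle's nine-point circle, once the common circle is identified the entire statement collapses to two short observations.

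First I would prove that the four nine-point circles coincide, using the six midpoints of the complete quadrangle $A,B,C,D$. Writing $M_{PQ}$ for the midpoint of $PQ$, the nine-point circle of $\triangle ABC$ passes through the three side midpoints $M_{AB},M_{BC},M_{CA}$ and through the three midpoints $M_{AD},M_{BD},M_{CD}$ of the segments joining each vertex to the orthocenter $D$; that is, it passes through all six pairwise midpoints. The same is true of $\triangle BCD$, whose orthocenter is $A$, so its nine-point circle again contains $M_{AB},M_{BC},M_{CA}$. As these three midpoints are non-collinear, the circle through them is unique, hence the two nine-point circles agree; repeating the argument for the remaining triangles shows that all four coincide in one circle $\omega$.

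With $\omega$ in hand the cyclicity is immediate: each of $E,F,G,H$ is the Feuerbach point of one half triangle and therefore lies on that triangle's nine-point circle, namely $\omega$, so $E,F,G,H$ are concyclic and $\omega$ is the circumcircle of $EFGH$. To locate its center I would invoke Varignon: the four side midpoints $M_{AB},M_{BC},M_{CD},M_{DA}$ of $ABCD$ form a parallelogram, and all four lie on $\omega$. A parallelogram inscribed in a circle is a rectangle, whose center (the intersection of its diagonals, which are diameters) is the center of the circle. But the center of the Varignon parallelogram is the common midpoint of its diagonals, namely $\tfrac14(A+B+C+D)$, which is exactly the vertex centroid of $ABCD$ computed in the proof of Theorem~\ref{thm:odX5}. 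Hence the circumcenter of $EFGH$ is the centroid of $ABCD$.

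The only real obstacle is the first step, establishing the shared nine-point circle, and even that is a known theorem; the work is simply to enumerate the nine points of each triangle correctly and to notice that the three side midpoints of one triangle reappear among the six midpoints of every other. After that nothing substantive remains: the Feuerbach points fall onto $\omega$ for free, and the inscribed Varignon rectangle pins the center to the centroid with no computation at all. The one minor point to dispatch is degeneracy (the Varignon parallelogram collapsing, or two of $E,F,G,H$ coinciding), which is excluded whenever $A,B,C,D$ are in general position, as they are for a genuine orthocentric quadrilateral.
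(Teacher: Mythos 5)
Your proof is correct, and it is worth noting that the paper itself offers no proof of this statement at all: Theorem~\ref{thm:ocX11} appears in the ``Areas for Future Research'' section as a computer-discovered result, immediately followed by the open question ``Is there a purely geometrical proof of this result?'' Your argument answers that question. The chain of reasoning is sound at every link: the four half triangles of an orthocentric system do share a single nine-point circle $\omega$ (each of the four nine-point circles passes through the same three non-collinear midpoints $M_{AB}, M_{BC}, M_{CA}$, since for each triangle these six pairwise midpoints split into three side midpoints and three Euler points); the Feuerbach point lies on the nine-point circle by Feuerbach's theorem, so $E,F,G,H\in\omega$; and the Varignon parallelogram, being inscribed in $\omega$, is a rectangle whose diagonals are diameters meeting at $\tfrac14(A+B+C+D)$, which is exactly the vertex centroid QA-P1 used throughout the paper. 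Your degeneracy remark is also easily discharged: in an orthocentric system $AC\perp BD$, so the Varignon parallelogram cannot collapse. The only cosmetic slip is the phrase ``the three side midpoints of one triangle reappear among the six midpoints of every other'' --- more precisely, the three side midpoints of $\triangle ABC$ reappear as Euler points or side midpoints of each other half triangle, which is what your enumeration actually establishes; and one could shorten the final step by noting directly that the nine-point center of $\triangle ABC$ is the midpoint of its circumcenter and its orthocenter $D$, which with the circumcenter at the origin is $\tfrac12 D=\tfrac14(A+B+C+D)$. Either way, the result stands proved.
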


\begin{figure}[h!t]
\centering
\includegraphics[width=0.45\linewidth]{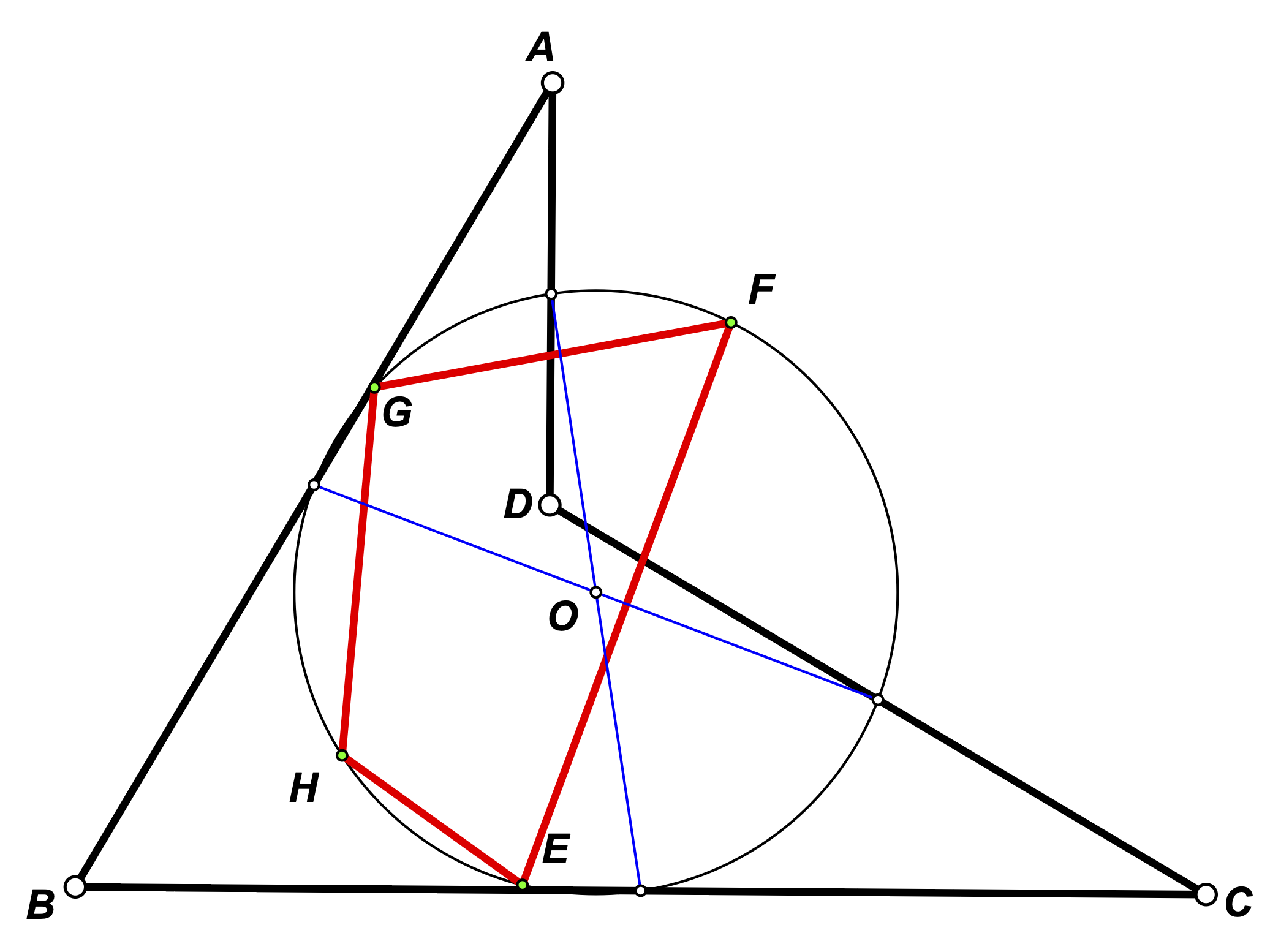}
\caption{orthocentric quad with $X_{11}$-points $\implies \m[ABCD]=\oo[EFGH]$}
\label{fig:ocX11}
\end{figure}

\begin{open}
Is there a purely geometrical proof of this result?
\end{open}

\subsection{Check for other quadrilateral centers}\ \\

In our study, when we checked to see if some center of the central quadrilateral coincides
with some center of the reference quadrilateral, we only checked the common centers listed in Table~\ref{table:centers}.
Additional centers could be investigated, such as the Miquel point (QL-P1), the area centroid (QG-P4),
the Morley Point (QL-P2), the Newton Steiner point (QL-P7), and the various quasi points.

\subsection{Investigate centers lying on quadrilateral lines}\ \\

We could also check to see if some center of the central quadrilateral lies on some notable line of the reference triangle,
such as the Newton line (QL-L1), the Steiner line (QL-L2), etc., or, in the case of cyclic quadrilaterals, the Euler line.

\subsection{Examine other properties}\ \\

There are many other properties between two quadrilaterals that can be studied.

For example, two polygons are \emph{orthogonal} if their corresponding sides are perpendicular.

\bigskip
The following result was found by computer.

\begin{theorem}
\label{thm:trX3}
Let $ABCD$ be a trapezoid.
Let $E$, $F$, $G$, $H$ be the $X_{3}$-points of $\triangle BCD$, $\triangle CDA$, 
$\triangle DAB$, and $\triangle ABC$, respectively.
Then quadrilaterals $ABCD$ and $HGFE$ are orthogonal (Figure~\ref{fig:trX3}).
\end{theorem}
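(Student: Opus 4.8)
The plan is to give a purely synthetic proof resting on a single elementary fact: the circumcenters of two triangles that share a common edge both lie on the perpendicular bisector of that edge, so the segment joining them is perpendicular to that edge. Here $E,F,G,H$ are the circumcenters ($X_3$-points) of the half triangles, so I will simply read off perpendicularities from the shared edges of adjacent half triangles and then reconcile the side labels.

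First I would record the four consequences of this fact. Triangles $\triangle BCD$ and $\triangle CDA$ (circumcenters $E$ and $F$) share edge $CD$, so $FE\perp CD$. Triangles $\triangle CDA$ and $\triangle DAB$ (circumcenters $F$ and $G$) share edge $DA$, so $GF\perp DA$. Triangles $\triangle DAB$ and $\triangle ABC$ (circumcenters $G$ and $H$) share edge $AB$, so $HG\perp AB$. Finally $\triangle ABC$ and $\triangle BCD$ (circumcenters $H$ and $E$) share edge $BC$, so $EH\perp BC$. None of these four relations uses the trapezoid hypothesis; they hold for every quadrilateral.

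Next I would align the sides of $ABCD$ with the corresponding sides of $HGFE$, namely $AB\leftrightarrow HG$, $BC\leftrightarrow GF$, $CD\leftrightarrow FE$, and $DA\leftrightarrow EH$. Two of the required perpendicularities are then immediate from the list above: $HG\perp AB$ and $FE\perp CD$. For the remaining two I would invoke the trapezoid hypothesis. In the paper's convention the angle condition $A+B=C+D$ forces $A+B=180^\circ$, i.e.\ $AD\parallel BC$. Feeding this parallelism into $GF\perp DA$ yields $GF\perp BC$, and into $EH\perp BC$ yields $EH\perp DA$. All four pairs of corresponding sides are thus perpendicular, so $ABCD$ and $HGFE$ are orthogonal.

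I do not anticipate a genuine obstacle, since the argument is elementary and avoids coordinates entirely. The only point demanding care is the bookkeeping: one must use the \emph{reversed} labeling $HGFE$ (not $EFGH$) so that the two ``free'' perpendicularities land on the pair $AB,CD$ while the parallelism transfers across to the pair $BC,DA$; pairing $ABCD$ with $EFGH$ instead would mismatch the sides and the argument would collapse. A minor secondary subtlety is confirming that no half triangle degenerates, but for a convex reference quadrilateral each half triangle is nondegenerate, and even a right half triangle causes no difficulty because the perpendicular-bisector characterization of the circumcenter remains valid.
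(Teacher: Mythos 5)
Your proof is correct, and it fills a gap rather than duplicating anything: in the paper this theorem appears only as a computer-discovered result with no proof supplied. The four unconditional perpendicularities $EH\perp BC$, $FG\perp AD$, $HG\perp AB$, $EF\perp CD$ are exactly the perpendicular-bisector observations the authors wrote down in a draft proof of a stronger claim (that $ABCD\sim HGFE$ for trapezoids), which they then discarded as fallacious because passing from perpendicular lines to equal angles can introduce supplements. Your argument sidesteps that trap entirely: for \emph{orthogonality} one never needs to compare angles, only to check that each corresponding pair of sides is perpendicular, and your bookkeeping with the reversed labeling $HGFE$ together with the single use of $AD\parallel BC$ does exactly that. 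One small point worth adding for completeness: if the trapezoid is cyclic (equivalently, isosceles), all four circumcenters coincide and $HGFE$ degenerates to a point, so the statement should be read as excluding that case (or the perpendicularities interpreted vacuously); apart from this, the only degeneracy to rule out is collinearity within a half triangle, which convexity already prevents, as you note.
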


\begin{figure}[h!t]
\centering
\includegraphics[width=0.35\linewidth]{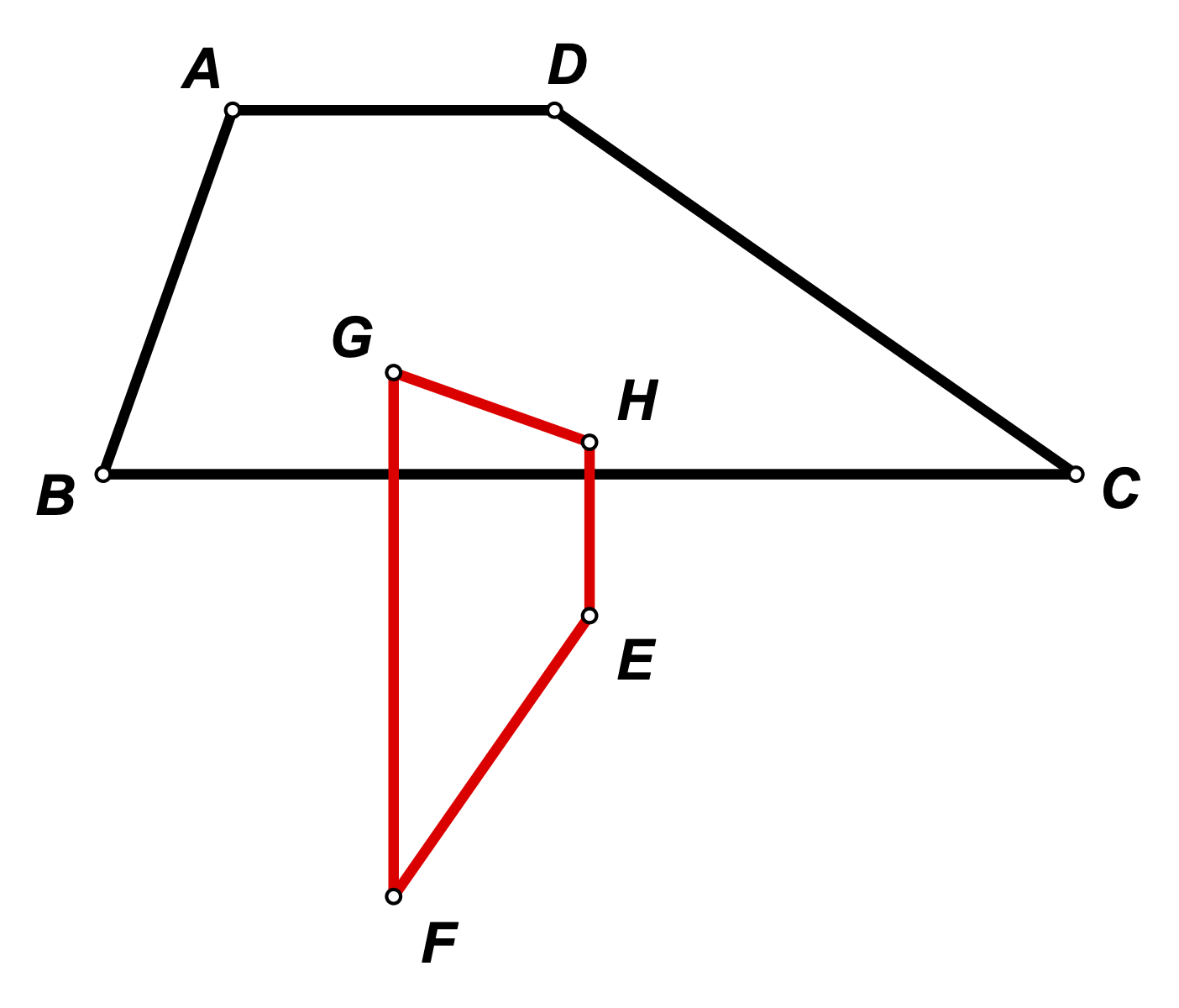}
\caption{trapezoid with $X_3$-points $\implies \mathrm{ortho}(ABCD,EFGH)$}
\label{fig:trX3}
\end{figure}

\begin{open}
Is there a center, $X$, such that quadrilaterals $ABCD$ and $EFGH$ have a common inconic?
\end{open}

\begin{open}
Is there a center, $X$, and a tangential quadrilateral $ABCD$, such that the central quadrilateral $EFGH$ formed with $X$-points is also
tangential and $ABCD$ and $EFGH$ have a common incircle? What about concentric incircles?
\end{open}

\goodbreak
\subsection{Investigate patterns in the center functions}\ \\

Many properties found are true for triangle centers $X_n$ for a list of values for~$n$.
What significance do these values have? Specifically, investigate the center functions associated
with these centers to see if some pattern can be found.

For example, it has been found that if $ABCD$ is cyclic, then $ABCD$ and $EFGH$ have a common non-circular circumconic for centers $X_n$
when $n=$4, 6, 54, 64, 1173, 11738, 3426, 3431, 11270, 13472, 13603, 14483, 14487, 14490, 14528, 16835, 11738...
What is the significance of these values of $n$?

Dylan Wyrzykowski \cite{Dylan2} has found the pattern with the following theorem.

\begin{theorem}
\label{thm:cyXn}
Let $ABCD$ be a cyclic quadrilateral.
Let $E$, $F$, $G$, and $H$ be the $X_{n}$-points of $\triangle BCD$, $\triangle CDA$, 
$\triangle DAB$, and $\triangle ABC$, respectively, where the isogonal conjugate of $X_n$
lies on the Euler line and has constant Shinagawa coefficients.
Then quadrilaterals $ABCD$ and $EFGH$ have a common circumconic.
\end{theorem}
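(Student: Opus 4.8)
The plan is to translate the hypothesis into geometric form, reduce the eight-point conconicity to a single scalar identity, and then verify that identity using the common circumcircle.

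\textbf{Step 1 (reformulating the hypothesis).} Having constant Shinagawa coefficients $(\kappa:\delta)$ means, in normalized barycentrics of a half triangle, that the Euler-line point $Y$ is a fixed affine combination $Y=(1-\tau)O+\tau H_\star$ of the circumcenter $O$ and the orthocenter $H_\star$, where $\tau=\delta/(2\kappa+\delta)$ is independent of the triangle. Thus $X_n$ is the isogonal conjugate of a point dividing $OH_\star$ in a fixed ratio, so each of $E$, $F$, $G$, $H$ lies on the Jerabek hyperbola of its half triangle (the isogonal image of that Euler line). The essential feature of a cyclic $ABCD$ is that all four half triangles share the circumcircle, so their circumcenters all coincide with a single point $O$; this is what couples the four constructions. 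The case $\kappa:\delta=1:0$ (so $Y=O$ and $X_n=X_4$) is exactly the rectangular-hyperbola statement of Theorem~\ref{thm:gqX4conic}, which I would keep as a sanity check.

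\textbf{Step 2 (reducing to one identity).} Let $L_{XY}$ denote a fixed linear form vanishing on line $XY$. The circumconics of $ABCD$ form the pencil $\lambda\,L_{AB}L_{CD}+\mu\,L_{AD}L_{BC}=0$, and a point $P$ lies on the member singled out by the value $\rho(P)=L_{AB}(P)L_{CD}(P)/\big(L_{AD}(P)L_{BC}(P)\big)$. Hence $A,B,C,D,E,F,G,H$ are conconic if and only if $\rho(E)=\rho(F)=\rho(G)=\rho(H)$. The cyclic relabeling $A\to B\to C\to D\to A$ carries the coordinate formula for $E$ into that for $F$ (then $G$, then $H$) while sending $\rho$ to $1/\rho$; applying it to the single equation $\rho(E)=\rho(F)$ produces $\rho(F)=\rho(G)$ and then $\rho(G)=\rho(H)$. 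So it suffices to prove the one identity $\rho(E)=\rho(F)$.

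\textbf{Step 3 (proving $\rho(E)=\rho(F)$).} Two of the four factors are immediate: $L_{BC}(E)$ and $L_{CD}(E)$ are proportional to the distances from $E$ to two sides of its own triangle $\triangle BCD$, so their ratio is read off from the trilinear coordinates of $X_n$ in $\triangle BCD$, which Step~1 expresses explicitly through the angles of the half triangle. The remaining factors involve the lines $AB$ and $AD$ through the foreign vertex $A$; I would compute $\operatorname{dist}(E,AB)=EB\sin\angle EBA$ and $\operatorname{dist}(E,AD)=ED\sin\angle EDA$, expanding the lengths $EB$, $ED$ and the angles via the law of sines and the inscribed-angle theorem in the common circle, so that every quantity becomes a function of the four arcs subtended by $A,B,C,D$. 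The main obstacle is precisely this last part: controlling the two lines through the vertex not belonging to $\triangle BCD$, and showing that once the arc relation forced by concyclicity is imposed, the expression for $\rho(E)$ is symmetric enough to agree with $\rho(F)$. Should a clean trigonometric reduction prove elusive, the same reduction can be carried out in barycentric coordinates with $\triangle ABC$ as reference and $D=(p:q:r)$ subject to $a^2qr+b^2rp+c^2pq=0$, retaining the Shinagawa coefficients $\kappa,\delta$ as free constants; this settles the entire family of centers in one symbolic computation, at the expense of the elegance of the synthetic argument.
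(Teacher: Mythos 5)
First, a point of comparison: the paper does not actually prove Theorem~\ref{thm:cyXn} at all --- it is stated as a pattern found by Wyrzykowski and deferred to \cite{Dylan2} --- so there is no in-paper argument to measure yours against. Judged on its own terms, your Steps 1 and 2 are sound and genuinely useful. Reading ``constant Shinagawa coefficients'' as ``fixed affine combination of $O$ and the orthocenter, independent of the triangle'' is the right interpretation (the listed indices $2,3,4,5,20,140,\dots$ are exactly the fixed-ratio points of segment $OH$), and observing that a cyclic $ABCD$ forces all four half triangles to share one circumcenter correctly isolates where the hypothesis enters. The pencil reduction $\rho(E)=\rho(F)=\rho(G)=\rho(H)$ and the cyclic-relabeling argument that collapses this to the single identity $\rho(E)=\rho(F)$ are both correct: the relabeling preserves the concyclicity hypothesis, permutes $E\to F\to G\to H\to E$, and sends $\rho$ to $1/\rho$, so the one identity propagates around the cycle. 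One small caution: your formula $\tau=\delta/(2\kappa+\delta)$ does not match the usual normalization of Shinagawa coefficients (under which $O$ has coefficients $(1,-1)$ and the orthocenter $(0,1)$); since you carry $\tau$ as a free parameter this does not affect the structure of the argument, but it would matter the moment you substitute.

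The genuine gap is Step 3: the identity $\rho(E)=\rho(F)$ --- which is the entire mathematical content of the theorem --- is never established. What you supply instead is two candidate strategies for establishing it, and you concede yourself that the trigonometric route may not close. The difficulty you flag, controlling $\mathrm{dist}(E,AB)$ and $\mathrm{dist}(E,AD)$ where $A$ is the vertex foreign to $\triangle BCD$, is real: $E$ is the isogonal conjugate of a point dividing $OH_\star$ in a fixed ratio, and its position admits no simple arc description, so the proposed law-of-sines expansion produces expressions whose simplification under $a^2qr+b^2rp+c^2pq=0$ is exactly the hard part of the theorem, not a routine afterthought. The fallback --- a single symbolic barycentric computation with the Shinagawa coefficients left as free constants --- is legitimate and is in the spirit of how this paper certifies its other analytic results, but until one of the two routes is actually executed, what you have is a well-organized reduction of the theorem to one unproved scalar identity, not a proof.
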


We found in Section~\ref{section:cyclic}, that in a cyclic quadrilateral, $ABCD$ and $EFGH$ are homothetic
for centers $X_n$ when $n=$2, 4, 5, 20, 140, 376, 381, 382, 546- 550, 631, and 632.
What is the pattern giving rise to these values of $n$?

We found the following result.

\begin{theorem}
\label{thm:cyCos}
Let $ABCD$ be a cyclic quadrilateral.
Let $X$ be a triangle center whose (trilinear) center function is of the form $\cos B\cos C+k\cos A$,
where $k$ is some constant, not necessarily an integer.
Let $E$, $F$, $G$, and $H$ be the $X$-points of $\triangle BCD$, $\triangle CDA$, 
$\triangle DAB$, and $\triangle ABC$, respectively.
Then quadrilaterals $ABCD$ and $EFGH$ are homothetic.
\end{theorem}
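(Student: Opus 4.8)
The plan is to exploit the fact that the four half triangles of a cyclic quadrilateral are all inscribed in the \emph{same} circle, say with center $O$ and radius $R$. Working with position vectors based at $O$, I will first derive a closed-form location for the $X$-point of an arbitrary triangle inscribed in this circle, and then read the homothety off directly. The whole proof hinges on the observation that the cumbersome trilinear data collapses, once normalized, to a simple multiple of $\vec{OA}+\vec{OB}+\vec{OC}$.

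First I would locate $X$ in a single inscribed triangle $\triangle ABC$. The given trilinears are $\alpha:\beta:\gamma=(\cos B\cos C+k\cos A:\cos C\cos A+k\cos B:\cos A\cos B+k\cos C)$, so by the law of sines the barycentric weights are proportional to $\sin A(\cos B\cos C+k\cos A)$ and its cyclic images. Using $A+B+C=\pi$ to write $\cos A=\sin B\sin C-\cos B\cos C$, each weight simplifies to $\sin A\sin B\sin C+\tfrac{k-1}{2}\sin 2A$. Writing $S=\sin A\sin B\sin C$, the three weights are $S+\tfrac{k-1}{2}\sin 2A$ and its images, and the identity $\sin 2A+\sin 2B+\sin 2C=4S$ makes their sum equal to $(2k+1)S$.

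The payoff comes from two vector identities valid for any triangle inscribed in our circle: $\vec{OA}+\vec{OB}+\vec{OC}$ is the position vector of the orthocenter, while $\sin 2A\,\vec{OA}+\sin 2B\,\vec{OB}+\sin 2C\,\vec{OC}=\vec 0$, since $(\sin 2A:\sin 2B:\sin 2C)$ are the barycentrics of $O$ itself. Substituting the weights and normalizing, the $\tfrac{k-1}{2}$-term is annihilated and I obtain
\[
\vec{OX}=\frac{\vec{OA}+\vec{OB}+\vec{OC}}{2k+1},
\]
valid whenever $2k+1\neq 0$ (the excluded case $2k+1=0$ is exactly when $X$ is a point at infinity, omitted per the methodology). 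Applying this to the four half triangles with $t=\tfrac1{2k+1}$ and $\mathbf s=\vec{OA}+\vec{OB}+\vec{OC}+\vec{OD}$ gives $\vec{OE}=t(\mathbf s-\vec{OA})$, $\vec{OF}=t(\mathbf s-\vec{OB})$, $\vec{OG}=t(\mathbf s-\vec{OC})$, $\vec{OH}=t(\mathbf s-\vec{OD})$. Each central vertex is therefore the image of the corresponding reference vertex under the single map $\vec{O\,\cdot}\mapsto -t\,\vec{O\,\cdot}+t\mathbf s$, i.e. a homothety of ratio $-t=-\tfrac1{2k+1}$ centered at the fixed point $Z=O+\tfrac{t}{1+t}\mathbf s$ sends $A,B,C,D$ to $E,F,G,H$. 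Hence $ABCD\sim EFGH$ are homothetic; when $k=-1$ the ratio degenerates to $1$ and the map is a translation, still homothetic in the broad sense.

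I expect the main obstacle to be the trigonometric and vector bookkeeping in the middle: verifying $\sin A(\cos B\cos C+k\cos A)=S+\tfrac{k-1}{2}\sin 2A$ and recognizing the two identities that collapse the normalized sum. Once the position formula is in hand, the homothety is immediate. A clean alternative that sidesteps the trigonometry is to place $O$ at the origin of the complex plane with $|a|=|b|=|c|=|d|=R$ and verify $\vec{OX}=\tfrac{a+b+c}{2k+1}$ by the same barycentric reduction; the homothety argument then goes through verbatim, which also exhibits $X$ lying on the Euler line of each half triangle and explains why these are exactly the centers producing homothety for cyclic quadrilaterals.
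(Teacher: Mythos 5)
Your proposal is correct, and it takes a genuinely different route from the paper: the paper states Theorem~\ref{thm:cyCos} without proof (it appears in the ``future research'' section as a computer-discovered pattern, with the paper's general fallback being symbolic verification in barycentric coordinates via Mathematica), whereas you give a self-contained synthetic/vector argument. The key steps all check out: the conversion of the trilinear $\cos B\cos C+k\cos A$ to the barycentric weight $S+\tfrac{k-1}{2}\sin 2A$ is a correct use of $\cos A=\sin B\sin C-\cos B\cos C$; the normalization $(2k+1)S$ follows from $\sin 2A+\sin 2B+\sin 2C=4S$; and the two vector facts (orthocenter as $\vec{OA}+\vec{OB}+\vec{OC}$, and $O$ having barycentrics $(\sin 2A:\sin 2B:\sin 2C)$ so that the $\sin 2A$-weighted sum vanishes) legitimately kill the $k$-dependent term, yielding $\vec{OX}=(\vec{OA}+\vec{OB}+\vec{OC})/(2k+1)$. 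Since all four half triangles of a cyclic quadrilateral share the circumcircle, the map $\vec{OP}\mapsto -t\,\vec{OP}+t\,\mathbf{s}$ with $t=1/(2k+1)$ does send $A,B,C,D$ to $E,F,G,H$, and this is a homothety of ratio $-1/(2k+1)$. What your approach buys is considerable: an explicit ratio and center for the homothety, a conceptual explanation of the ``constant Shinagawa coefficients / Euler line'' note attached to the theorem, and independent confirmation of the paper's area table for cyclic quadrilaterals (e.g.\ $k=\tfrac12$ gives $X_5$ and $[ABCD]=4[EFGH]$; $2k+1=\pm3$ gives $X_{376}$ and ratio $9$; $2k+1=4,5$ give $X_{140},X_{631}$ and ratios $16,25$). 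Two boundary cases deserve the explicit mention you give them: $k=-\tfrac12$ puts $X$ at infinity (excluded by the paper's methodology), and $k\in\{0,-1\}$ give $t=\pm1$, where the map is a point reflection or a translation, matching the paper's separate listing of $ABCD\cong EFGH$ for $X_4$ and $X_{20}$; for $k=-1$ one should say ``congruent by translation'' rather than ``homothetic'' in the strict sense.
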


\textbf{Note.} These are the points on the Euler line that have constant Shinagawa coefficients.

For similar results involving these points, see \cite{shapes}.

\bigskip
We found in Section~\ref{section:harmonic}, that if quadrilateral $ABCD$ is harmonic, then using centers $X_n$, we have
$\persp[ABCD,GHEF]$ when $n$=15, 16, 61, 62, 371, and 372. What is the pattern in these numbers? We found the following result by computer.

\begin{theorem}
\label{thm:hqPattern}
Let $ABCD$ be a harmonic quadrilateral.
Let $X$ be a triangle center whose center function is of the form $a\left(k(a^2-b^2-c^2)-S\right)$,
where $k$ is some constant and $S=2[ABC]$. 
Let $E$, $F$, $G$, and $H$ be the $X$-points of $\triangle BCD$, $\triangle CDA$, 
$\triangle DAB$, and $\triangle ABC$, respectively.
Then quadrilaterals $ABCD$ and $GHEF$ are perspective.
\end{theorem}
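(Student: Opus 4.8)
The plan is to prove the statement analytically in barycentric coordinates, following the method of Theorem~\ref{thm:odX5}. I place $A=(1:0:0)$, $B=(0:1:0)$, $C=(0:0:1)$, so that $a=BC$, $b=CA$, $c=AB$, and invoke the preceding theorem on harmonic quadrilaterals to write $D=(2a^2:-b^2:2c^2)$; this single substitution encodes both the cyclic and the equal-product-of-opposite-sides conditions. For each half triangle I produce the $X$-point from the trilinear center function $f(a,b,c)=a\bigl(k(a^2-b^2-c^2)-S\bigr)$: if a half triangle has vertices $V_1V_2V_3$ with opposite side lengths $\ell_1,\ell_2,\ell_3$, then its $X$-point has barycentrics $\bigl(\ell_1 f(\ell_1,\ell_2,\ell_3):\ell_2 f(\ell_2,\ell_3,\ell_1):\ell_3 f(\ell_3,\ell_1,\ell_2)\bigr)$ with respect to $V_1V_2V_3$, which I then transport into the ambient $\triangle ABC$ frame exactly as \texttt{CentroETCTriangulo} does. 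This yields $E,F,G,H$ in ambient coordinates, after which the perspectivity of $ABCD$ and $GHEF$ is the single statement that the four lines $AG$, $BH$, $CE$, $DF$ are concurrent.

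The computation is made tractable by two observations. First, every squared side length of a half triangle (for instance $CD^2$ or $DB^2$) is, by the barycentric distance formula applied to $D=(2a^2:-b^2:2c^2)$, a polynomial in $a^2,b^2,c^2$; and since the normalized $A$-, $B$-, $C$-coordinates of $D$ are the signed area ratios $[BCD]/[ABC]$, $[CDA]/[ABC]$, $[DAB]/[ABC]$, each half-triangle area is a \emph{rational} multiple of $[ABC]$. Consequently the only irrationality entering the coordinates is the single radical $S=2[ABC]$, with $S^2=\tfrac14\bigl(2a^2b^2+2b^2c^2+2c^2a^2-a^4-b^4-c^4\bigr)$. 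Second, because $f$ is affine in $k$, namely $f=k\,a(a^2-b^2-c^2)-aS$, each of $E,F,G,H$ is, as a homogeneous coordinate vector, affine in $k$: its $k$-part is a scalar multiple of the circumcenter of the corresponding half triangle, and its constant part is $S$ times the symmedian point of that half triangle. Using that $ABCD$ is cyclic, all four half triangles share the circumcircle, so their four circumcenters coincide with the common center $O$.

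Writing each center as $(\text{symmedian part})\cdot S+(\text{circumcenter part})\cdot k$, every line $AG,BH,CE,DF$ takes the form $k\,\alpha+S\,\beta$, where $\alpha$ is the join of the relevant vertex with $O$ and $\beta$ is the join of that vertex with the relevant half-triangle symmedian point ($K_{DAB}$ for $AG$, $K_{ABC}$ for $BH$, $K_{BCD}$ for $CE$, $K_{CDA}$ for $DF$). Hence each $3\times3$ concurrency minor is a binary cubic form in $(k,S)$, and after reducing modulo $S^2$ its vanishing for all $k$ is equivalent to the simultaneous vanishing of four coefficient polynomials in $a^2,b^2,c^2$. The $k^3$-coefficient asks that $AO,BO,CO,DO$ concur, which they do at $O$; the $S^3$-coefficient is exactly the $k=0$ (symmedian, $X_6$) case, asserting that $A\,K_{DAB}$, $B\,K_{ABC}$, $C\,K_{BCD}$, $D\,K_{CDA}$ concur; and the two remaining mixed coefficients are the genuinely new identities.

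The main obstacle is verifying the symmedian coefficient together with the two mixed coefficients: each is a large polynomial identity in $a^2,b^2,c^2$ that must be shown to vanish identically, and the bookkeeping of transporting four different half-triangle coordinate systems into the ambient frame is where errors are most likely. I would carry this out with exact symbolic computation in \texttt{baricentricas.m}, treating $S$ as a formal symbol subject to the single relation for $S^2$ and keeping careful track of the signs of the half-triangle areas $[BCD],[CDA],[DAB]$ dictated by the orientation forced by the negative coordinate $q=-b^2<0$ of $D$. Once the four coefficient polynomials reduce to zero, concurrency of $AG,BH,CE,DF$ — and hence the asserted perspectivity of $ABCD$ and $GHEF$ — follows for every value of the parameter $k$.
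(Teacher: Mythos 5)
The first thing to say is that the paper contains no proof of Theorem~\ref{thm:hqPattern} at all: it is stated only as a result ``found by computer,'' so your reduction is being compared against nothing more than the paper's general methodology (barycentric coordinates with $D=(2a^2:-b^2:2c^2)$ and exact symbolic verification deferred to supplementary notebooks). Within that methodology your organizing idea is a genuine and valuable addition. Since $f=k\,a(a^2-b^2-c^2)-aS$ is affine in $k$, each half-triangle center is a point of the pencil spanned by that triangle's circumcenter and symmedian point, and because $ABCD$ is cyclic all four circumcenters collapse to the single point $O$; hence each line $AG$, $BH$, $CE$, $DF$ has line coordinates of the form $k\alpha+S\beta$ with every $\alpha$ through $O$, the $k^3$ coefficient of each concurrency determinant vanishes for free, and the entire one-parameter family of centers is disposed of by four $k$-independent polynomial identities. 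That is cleaner than dragging $k$ through the computation as a symbolic parameter, and your bookkeeping of the single radical $S$ (with $S^2$ rational in $a^2,b^2,c^2$ and each half-triangle area a rational multiple of $[ABC]$) is correct.

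Two caveats. First, as written this is a plan rather than a proof: the $S^3$ coefficient and the two mixed coefficients are asserted to be verifiable in \texttt{baricentricas.m} but are not verified, so the argument currently has the same epistemic status as the paper's ``found by computer.'' Second, your $S^3$ coefficient (the $k=0$, i.e.\ $X_6$, instance) is also free, and noticing this would shrink the residual computation to the two mixed coefficients: in a harmonic quadrilateral each diagonal is a symmedian of the two half triangles it subtends, so $K_{DAB}$ and $K_{BCD}$ lie on $AC$ while $K_{ABC}$ and $K_{CDA}$ lie on $BD$; the four $\beta$-lines are therefore just the two diagonals and concur at the diagonal point. This degeneracy also explains why $X_6$ does not appear in the paper's table of perspectivities for harmonic quadrilaterals even though it corresponds to $k=0$ of the family.
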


\bigskip
Another result, found by computer, involving a set of centers meeting a pattern is the following.

\bigskip
A \emph{power point} of a triangle is a triangle center whose center function is of the form $f(a,b,c)=a^k$,
where $k$ is a constant (not necessarily an integer).

\begin{theorem}
\label{thm:paXpow}
Let $ABCD$ be a parallelogram. Let $X$ be some power point of a triangle.
Let $E$, $F$, $G$, and $H$ be the $X$-points of $\triangle BCD$, $\triangle CDA$, 
$\triangle DAB$, and $\triangle ABC$, respectively.
Then quadrilaterals $ABCD$ and $EFGH$ have a common diagonal point.
\end{theorem}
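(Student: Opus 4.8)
The plan is to give a purely synthetic proof based on the central symmetry of the parallelogram, an argument that in fact works for \emph{any} triangle center $X$ and not merely for power points. Let $O$ be the common midpoint of the diagonals $AC$ and $BD$; by definition $O$ is the diagonal point of $ABCD$. I would establish that $O$ is simultaneously the diagonal point of $EFGH$ by showing that $O$ is the midpoint of both $EG$ and $FH$, so that both diagonals of $EFGH$ pass through $O$.

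First I would introduce the half-turn $\sigma$ about $O$, that is, the point reflection through $O$. Since $ABCD$ is a parallelogram, $\sigma$ interchanges opposite vertices: $\sigma(A)=C$, $\sigma(C)=A$, $\sigma(B)=D$, and $\sigma(D)=B$. The crucial step is to track how $\sigma$ permutes the four half triangles. Following the vertices, $\sigma$ carries $\triangle BCD$ (triangle~1, with $X$-center $E$) to the triangle on vertex set $\{D,A,B\}$, which is $\triangle DAB$ (triangle~3, with $X$-center $G$); and it carries $\triangle ABC$ (triangle~4, with $X$-center $H$) to the triangle on vertex set $\{C,D,A\}$, which is $\triangle CDA$ (triangle~2, with $X$-center $F$).

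Next I would invoke the equivariance of a triangle center under isometries: for any isometry $\sigma$ and any triangle $T$, the $X$-center of $\sigma(T)$ is the image under $\sigma$ of the $X$-center of $T$. Applied to the half-turn, this yields $\sigma(E)=G$ and $\sigma(H)=F$. Because $\sigma$ is the point reflection through $O$, the equation $\sigma(E)=G$ says precisely that $O$ is the midpoint of segment $EG$, and $\sigma(H)=F$ says $O$ is the midpoint of $FH$. Hence both diagonals of $EFGH$ pass through $O$, so the diagonal point of $EFGH$ is $O$, which coincides with the diagonal point of $ABCD$.

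The one point demanding care, and hence the main ``obstacle,'' is the equivariance step: one must confirm that an isometry sends the $X$-center of a triangle to the $X$-center of its image regardless of how the three vertices happen to be labelled. This holds because a triangle center in Kimberling's sense is a well-defined point of the unlabelled, unoriented triangle (the center function being symmetric in $b$ and $c$), and in any case the half-turn preserves orientation. Since the argument never appeals to the specific form $f(a,b,c)=a^k$, the power-point hypothesis is not truly needed; it merely records the family of centers for which the relationship happened to surface in the computer search.
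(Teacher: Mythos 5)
Your argument is correct, and it is worth noting that the paper itself offers no proof of Theorem~\ref{thm:paXpow}: the result appears in the ``Areas for Future Research'' section as a computer-discovered observation, so your synthetic proof fills a genuine gap rather than duplicating an existing one. The half-turn $\sigma$ about the common midpoint $O$ of the diagonals does send $\triangle BCD\mapsto\triangle DAB$ and $\triangle ABC\mapsto\triangle CDA$, and since any Kimberling center is equivariant under isometries (it is a well-defined point of the unlabelled triangle, and barycentric combinations with coefficients depending only on side lengths commute with affine maps), you correctly get $\sigma(E)=G$ and $\sigma(H)=F$, hence both diagonals of $EFGH$ pass through $O=\mathrm{dp}(ABCD)$. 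In fact your argument shows more: $EFGH$ is centrally symmetric about $O$, i.e.\ is itself a parallelogram with center $O$, consistent with the shape results the authors cite from their earlier paper. Your closing observation is the most interesting part: the power-point hypothesis plays no role, so the theorem holds for \emph{every} triangle center, which suggests the computer search's singling out of power points reflects the reporting conventions of the study (only ``interesting'' parallelogram relationships were listed) rather than any genuine pattern; you might flag this to the authors as a strengthening. The only caveat to add is a nondegeneracy hypothesis: if $E=G$ or $F=H$ (e.g.\ the $X_3$-points of a rectangle, which all collapse to $O$), the diagonal point of $EFGH$ is undefined, so such cases must be excluded, as the paper implicitly does by discarding degenerate configurations.
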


\subsection{Ask about uniqueness}

Find an entry in one of our tables where there is only one center giving a particular relationship
for a certain type of quadrilateral. For example, for a general quadrilateral, $m[ABCD]=m[EFGH]$
seems to be true only when $n=2$. Is this because we only searched the first 1000 values of $n$?
Expand the search and find other values of $n$ for which the relationship is true or prove
that the result is unique. For example, we can state the following.

\begin{conjecture}
Let $ABCD$ be an arbitrary quadrilateral.
Let $X$ denote a triangle center.
Let $E$, $F$, $G$, and $H$ be the $X$-points of $\triangle BCD$, $\triangle CDA$, 
$\triangle DAB$, and $\triangle ABC$, respectively.
Then the centroid of $ABCD$ coincides with the centroid of $EFGH$ if and only if $X=X_2$.
\end{conjecture}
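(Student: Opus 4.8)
The plan is to recast the desired centroid coincidence as an additive condition on the \emph{deviation} of the chosen center from the centroid in each half triangle, and then to show that this condition can be satisfied identically only by the centroid itself.

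First I would set up the reformulation. For a triangle $T$, let $\Xi(T)$ denote the $X$-point of $T$, let $\gamma(T)$ denote its centroid, and put $\delta(T)=\Xi(T)-\gamma(T)$. Since every triangle center is covariant under similarities, $\delta$ is a free vector that is translation invariant, rotates with the triangle, and is homogeneous of degree $1$ under scaling; moreover $\delta(T)=0$ whenever $T$ is equilateral, because all centers coincide there. The vertex centroid of $ABCD$ is $(A+B+C+D)/4$ and that of $EFGH$ is $(E+F+G+H)/4$, where $E=\Xi(\triangle BCD)$, $F=\Xi(\triangle CDA)$, $G=\Xi(\triangle DAB)$, $H=\Xi(\triangle ABC)$. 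Each vertex lies in exactly three of the four half triangles, so the four half-triangle centroids sum to $A+B+C+D$, and therefore $E+F+G+H=(A+B+C+D)+\Sigma$, where $\Sigma:=\delta(\triangle BCD)+\delta(\triangle CDA)+\delta(\triangle DAB)+\delta(\triangle ABC)$. Hence $\m[ABCD]=\m[EFGH]$ if and only if $\Sigma=0$. The ``if'' direction is now immediate: for $X=X_2$ we have $\delta\equiv 0$, so $\Sigma=0$; this is exactly Theorem~\ref{thm:gqX2m}.

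The substance is the ``only if'' direction: assuming $\Sigma=0$ for every quadrilateral $ABCD$, I must deduce $\delta\equiv 0$, i.e.\ that the $X$-point of every triangle is its centroid, which by definition of a triangle center forces $X=X_2$. The strategy is to isolate a single summand. I would first record a warning case: for a parallelogram the opposite half triangles are point-reflections of one another, so their deviations cancel in pairs and $\Sigma=0$ holds for \emph{every} center; thus the identity carries no information at parallelograms, and the test quadrilaterals must be chosen away from them. To isolate $\delta(\triangle ABC)$, fix $\triangle ABC$ and let the fourth vertex $D$ recede to infinity in a direction $\phi$. Then $\delta(\triangle ABC)$ stays constant while each of $\delta(\triangle BCD)$, $\delta(\triangle CDA)$, $\delta(\triangle DAB)$ grows in size like $|D|$ by degree-$1$ homogeneity. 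Writing the barycentric coordinates of the $X$-point through the center function (exactly as with \texttt{CentroETCTriangulo}) and expanding in powers of $|D|$, the requirement that $\Sigma$ stay bounded forces the $O(|D|)$ parts of the three growing deviations to cancel for each $\phi$, and the surviving $O(1)$ part must equal $-\delta(\triangle ABC)$. Carrying out this asymptotic bookkeeping for enough directions $\phi$ and enough base triangles should express $\delta(\triangle ABC)$ as a fixed functional of the center's large-argument behaviour, and the cancellation constraints should force that behaviour to be that of the centroid; together with the vanishing of $\delta$ on equilateral triangles this yields $\delta\equiv 0$.

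I expect the asymptotic isolation to be the main obstacle. A single quadrilateral supplies only the two scalar equations $\Sigma=0$, and because the edge-difference vectors $A-D$, $B-D$, $C-D$ obey one linear relation, this condition does not separate the four deviations directly; a limiting or differential argument is genuinely needed to decouple them. The delicate points are (i) controlling how the barycentric coordinates of an arbitrary center behave as a triangle degenerates into a thin sliver (some centers escape to infinity there, so the expansion must be organized carefully), and (ii) checking that the extracted cancellation conditions determine the center uniquely rather than merely cutting out a larger family. An alternative route that avoids degenerate limits is to linearize $\Sigma=0$ about a parallelogram: since $\Sigma$ vanishes to zeroth order there, its first variation as one vertex is perturbed gives a family of exact linear constraints on the derivatives of $\delta$ at parallelogram shapes, and bootstrapping these with the homogeneity of $\delta$ is a second plausible path to $\delta\equiv 0$.
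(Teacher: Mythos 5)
First, note that the paper offers no proof of this statement: it appears explicitly as a conjecture in the ``Areas for Future Research'' section, so there is no argument of the authors' to compare yours against. Judged on its own, your write-up proves the easy half and correctly reformulates the hard half, but does not prove the conjecture. The reduction is sound and worth keeping: writing $\delta(T)$ for the vector from the centroid of a half triangle $T$ to its $X$-point, the identity $\gamma(BCD)+\gamma(CDA)+\gamma(DAB)+\gamma(ABC)=A+B+C+D$ (each vertex lies in exactly three of the four half triangles) shows that the vertex centroids of $ABCD$ and $EFGH$ coincide if and only if $\Sigma=\delta(BCD)+\delta(CDA)+\delta(DAB)+\delta(ABC)=0$; the ``if'' direction then follows since $\delta\equiv 0$ for $X_2$, giving a cleaner route to Theorem~\ref{thm:gqX2m} than the paper's homothety argument. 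Your observation that $\Sigma\equiv 0$ on parallelograms for \emph{every} center (opposite half triangles are point reflections of one another, so their deviations cancel in pairs) is also correct and important: it shows that any uniqueness proof must exploit genuinely asymmetric quadrilaterals.

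The gap is the entire ``only if'' direction, which is the actual content of the conjecture, and the route you propose would fail as written. Sending $D$ to infinity degenerates the three half triangles containing $D$, and Kimberling's definition of a center function (quoted in Section~\ref{section:centers}) requires only homogeneity and bisymmetry on nondegenerate triples --- not continuity, let alone the existence of an asymptotic expansion ``in powers of $|D|$''. Many genuine centers escape to infinity or have no limit as a triangle degenerates, and a center function need not even be continuous, so the $O(|D|)$ versus $O(1)$ bookkeeping on which your isolation of $\delta(\triangle ABC)$ depends is simply not available at the stated level of generality. Even granting such an expansion, the decisive step --- that the resulting cancellation constraints admit only the centroid --- is asserted (``should force'', ``should express'') rather than argued; nothing in the write-up excludes a second center whose deviations happen to sum to zero over every quadrilateral. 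The alternative linearization about parallelograms has the same incompleteness: you obtain a system of first-variation constraints but do not show it has only the trivial solution. In short, the reformulation is a genuine contribution and the forward implication is established, but the converse remains open after your argument, exactly as it is in the paper.
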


\subsection{Use notable points that are not triangle centers}\ \\

There are other points associated with a triangle that are not triangle centers.
Look for properties when some of these points are used.
For example, the following result was found by computer.

\begin{theorem}
\label{thm:sqBrocard}
Let $ABCD$ be a square.
Let $E$, $F$, $G$, and $H$ be the first Brocard points of $\triangle BCD$, $\triangle CDA$, 
$\triangle DAB$, and $\triangle ABC$, respectively (Figure~\ref{fig:sqBrocard}).
Then
$$[ABCD]=5[EFGH].$$
\end{theorem}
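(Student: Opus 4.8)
The plan is to reduce to a single square by a similarity argument, exploit the $90\degrees$ rotational symmetry of the configuration to show that the central quadrilateral is itself a square concentric with $ABCD$, and then pin down the size ratio by locating just one of the four Brocard points. Since the ratio $[ABCD]/[EFGH]$ is invariant under direct similarities, and every square is directly similar to the standard one or to its mirror image, it suffices to treat the counterclockwise unit square $A=(0,0)$, $B=(1,0)$, $C=(1,1)$, $D=(0,1)$, with center $O=(\tfrac12,\tfrac12)$; the argument below is insensitive to orientation because a reflection merely interchanges the two Brocard points of each half triangle without changing the (mirror-symmetric) distances that control the areas. Note that, unlike the rest of the paper, the first Brocard point is \emph{not} a triangle center in Kimberling's sense, so the \texttt{CentroETCTriangulo} machinery does not apply and I would instead use the explicit barycentric coordinates $\bigl(\tfrac{1}{b^2}:\tfrac{1}{c^2}:\tfrac{1}{a^2}\bigr)$ of the first Brocard point $\Omega_1$.

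The core observation is that the four half triangles are mutually congruent right isosceles triangles, and the quarter-turn $\rho$ about $O$ that sends $A\to B\to C\to D\to A$ maps the ordered triangle $\triangle ABC$ onto the ordered triangle $\triangle BCD$ with matching vertex order, and cyclically thereafter. Because $\rho$ is an orientation-preserving isometry, it carries the first Brocard point of an ordered triangle to the first Brocard point of its ordered image; hence $\rho(H)=E$, $\rho(E)=F$, $\rho(F)=G$, and $\rho(G)=H$. Four distinct points permuted in a single $4$-cycle by a quarter-turn about $O$ necessarily form a square centered at $O$, so $EFGH$ is a square concentric with $ABCD$. This collapses the whole problem to comparing two concentric squares, whose areas stand in the ratio $|OE|^2:|OA|^2$.

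To finish, I would locate one Brocard point explicitly. For $\triangle ABC$ with side lengths $a=BC=1$, $b=CA=\sqrt2$, $c=AB=1$, the coordinates $\bigl(\tfrac{1}{b^2}:\tfrac{1}{c^2}:\tfrac{1}{a^2}\bigr)=(1:2:2)$ give $H=\tfrac15(A+2B+2C)=(\tfrac45,\tfrac25)$, which one checks satisfies $\angle HAB=\angle HBC=\angle HCA$, confirming it is indeed the first (not the second) Brocard point. Then $E=\rho(H)=(\tfrac35,\tfrac45)$, so $|OE|^2=\tfrac1{100}+\tfrac9{100}=\tfrac1{10}$, while $|OA|^2=\tfrac12$. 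Therefore $[EFGH]=\dfrac{|OE|^2}{|OA|^2}\,[ABCD]=\dfrac{1}{5}[ABCD]$, which is the claim.

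The main obstacle is bookkeeping of handedness rather than computation. The first and second Brocard points are swapped both by reflection and by reversing the cyclic order of the vertices, so the two places where this must be controlled are (i) verifying that $\rho$ preserves the ordered-vertex labeling of each half triangle, so that first Brocard points map to first Brocard points and the $4$-cycle is genuine, and (ii) confirming the barycentric formula used really selects $\Omega_1$ for the specific right isosceles half triangle at hand. Once the angle check in the previous paragraph is carried out, both points are settled and the remaining arithmetic is elementary; alternatively, the entire theorem can be confirmed by the barycentric method of Theorem~\ref{thm:odX5}, substituting the rectangle/square condition and the closed form for $\Omega_1$ in place of a \texttt{CentroETCTriangulo} call.
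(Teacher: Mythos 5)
Your proof is correct, and it is worth noting that the paper itself offers no proof of this theorem: the result is stated as computer-discovered, and immediately afterwards the authors pose the open question of whether a purely geometrical proof exists, so your argument essentially answers that question. The key step --- that the quarter-turn $\rho$ about the center $O$ carries the ordered half triangle $\triangle ABC$ to $\triangle BCD$ (and so on cyclically), and, being orientation-preserving, carries first Brocard point to first Brocard point, so that $E$, $F$, $G$, $H$ form a single $\rho$-orbit and hence a square concentric with $ABCD$ --- is genuinely geometric and reduces everything to the single ratio $|OE|^2/|OA|^2$. The only computation left is locating one Brocard point of one right isosceles half triangle, and your value $H=(4/5,2/5)$ for the unit square $A=(0,0)$, $B=(1,0)$, $C=(1,1)$, $D=(0,1)$ checks out (all three angles $\angle HAB$, $\angle HBC$, $\angle HCA$ have tangent $1/2$), as does $E=\rho(H)=(3/5,4/5)$, $|OE|^2=1/10$, $|OA|^2=1/2$, giving the ratio $1/5$; a direct shoelace computation with $F=(1/5,3/5)$ and $G=(2/5,1/5)$ confirms $[EFGH]=1/5$. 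Your handling of the two handedness pitfalls is also sound: reflection or reversal of the vertex order swaps $\Omega_1$ and $\Omega_2$, but the second-Brocard-point configuration is the mirror image of the first and so has the same area, which is why the ratio is independent of the orientation of the labeling. If you wanted the argument to be entirely synthetic you would only need a compass-and-straightedge identification of the Brocard point of an isosceles right triangle in place of the barycentric evaluation, but as written the proof is complete and considerably more informative than the numerical verification on which the paper relies.
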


\begin{figure}[h!t]
\centering
\includegraphics[width=0.3\linewidth]{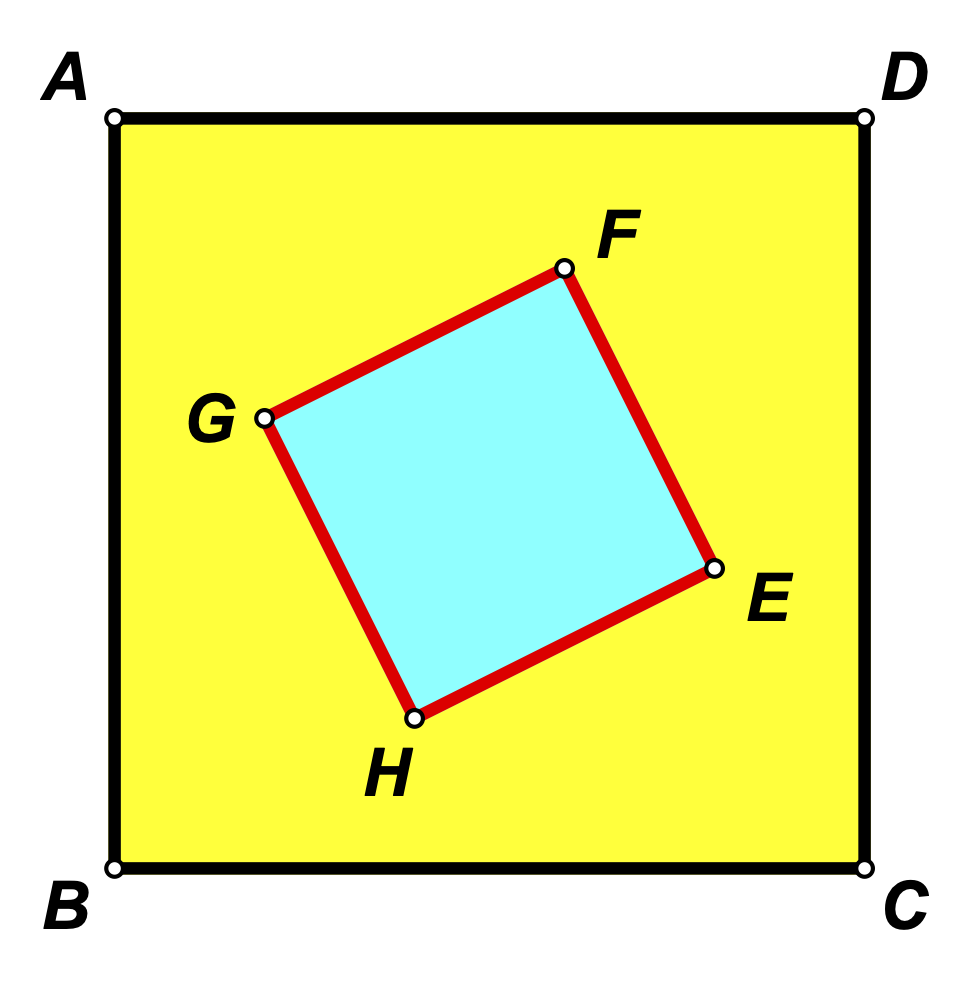}
\caption{square with Brocard points $\implies [ABCD]=5[EFGH]$}
\label{fig:sqBrocard}
\end{figure}

\begin{open}
Is there a purely geometrical proof of this result?
\end{open}

\subsection{Place different centers in different half triangles}\ \\

Would we find any interesting results if we place $X_n$-points in triangles $ABC$ and $ACD$, but place $X_m$
points in triangles $ABD$ and $BCD$, with $m\neq n$?

\subsection{Investigate some QL-properties}\ \\

If the lines $AE$, $BF$, $CG$, and $DH$ do not concur, then these four lines (with their points of intersection) form a figure known as
a \textit{complete quadrilateral}.
A complete quadrilateral has many notable points associated with it, such as the Miquel Point, the Morley Point,
the Clawson Center, and the Newton-Steiner Point. For a more extensive list see the section on Quadrilateral Points in \cite{EQF}.
Investigate whether any of these points coincide with notable points associated with the reference quadrilateral, $ABCD$.

\subsection{Work in 3-space}\ \\

If point $D$ is moved off the plane of $\triangle ABC$, then the reference quadrilateral becomes a tetrahedron
and the half triangles become the faces of the reference tetrahedron.
The central quadrilateral becomes the central tetrahedron. 
Investigate how the central tetrahedron is related to the reference tetrahedron.
Some results can be found in \cite{tetrahedron}.

\bigskip

\end{document}